\documentclass[11pt]{amsart}

\usepackage{upgreek,amssymb,amsmath,geometry,latexsym,graphics,graphicx,tabularx,shapepar,enumerate,
hyperref,stmaryrd,glossaries,color,tikz-cd}

\DeclareSymbolFontAlphabet{\amsmathbb}{AMSb}

\numberwithin{equation}{section}

\newcommand{\Div}{\operatorname{Div}}
\newcommand{\ZZ}{\mathbb{Z}}
\newcommand{\FF}{\mathbb{F}}
\newcommand{\CC}{\mathbb{C}}
\newcommand{\QQ}{\mathbb{Q}}
\newcommand{\RR}{\mathbb{R}}
\newcommand{\GG}{\mathbb{G}}
\newcommand{\NN}{\mathbb{N}}
\newcommand{\PP}{\mathbb{P}}
\newcommand{\TT}{\mathbb{T}}
\newcommand{\sgn}{\operatorname{sgn}}

\newtheorem{Theorem}{Theorem}[section]
\newtheorem{Lemma}[Theorem]{Lemma}
\newtheorem{Question}[Theorem]{Question}
\newtheorem{Proposition}[Theorem]{Proposition}
\newtheorem{Conjecture}[Theorem]{Conjecture}
\newtheorem{Corollary}[Theorem]{Corollary}
\newtheorem{Definition}[Theorem]{Definition}
\newtheorem{Remark}[Theorem]{Remark}

\title{Zeta functions over curves}

\date{\today}
\author{F. Pellarin, with an appendix by G.\,H. Ferraro}

\address{Giacomo H. Ferraro\\
Department of Mathematics\\
3368 TAMU\\
Texas A\&M University\\
College Station, TX 77843-3368, USA.}

\address{Federico Pellarin\\
Dipartimento di Matematica Guido Castelnuovo\\
Universit\`a Sapienza \\
Piazzale Aldo Moro 5 \\
00185 Rome, Italy.}

\keywords{Zeta functions, Curves over finite fields}
\subjclass{MSC2020 classification 11G09}

\begin{document}

\begin{abstract}
In this paper we review the theory that David Goss developed, starting from 1979, to construct zeta functions 
around Carlitz zeta values and other remarkable formal series in local fields of positive characteristic. In the description of Goss' theory, we will see how it is primarily motivated by analogies with the classical theory of complex valued zeta and $L$-functions. We compare Goss' theory with another way of constructing zeta and $L$-functions that emerged in more recent times. 
The functions in the second type have as domains curves over finite fields, with the scalars extended to complete and algebraically closed fields of positive characteristic. The second type of functions interacts with 
Goss' functions but remains fundamentally different. We shall review a rationality theorem of Ferraro that allows, among others, to introduce some kind of analogue of the function $\xi$ of Riemann. In the path of describing Ferraro's proof,
we present some essential tools useful to get into the theory: shtuka divisors and functions, special functions, Anderson motives, Drinfeld modules, among others. We discuss certain relative zeta functions that can be considered as counterparts of Dedekind zeta functions. In particular, we use methods introduced by Goss to prove that these functions extend to entire functions. 

The paper contains an appendix by Ferraro where the property of entireness of the above relative zeta functions is deduced (in a special case) from 
the conjunction of a formula by Angl\`es, Ngo Dac and Tavares Ribeiro and Ferraro's rationality formula. Ferraro also 
presents a conjecture on the order of vanishing of this function at the canonical point $\Xi$ and some numerical evidences.
\end{abstract}

\maketitle

\tableofcontents

\section{Introduction}

The aim of the present paper is to give an overview of certain results on ``zeta and $L$-values in global function fields.'' More precisely, we present results in the perspective of David Goss' attempt of developing a theory of zeta and $L$-functions interpolating the above values, which are formal series in local fields of positive characteristic. There are ``zeta and $L$-values'' in 
local fields of positive characteristic: the first and most famous example was first considered by Carlitz in 1935 and is described in \S \ref{glimpse-Carlitz}. 

The next logical step is ``zeta functions''. After having presented the interpolation problem in \S \ref{dream}, we review Goss' construction of zeta functions in \S \ref{Goss-theory-zeta-functions}. But there is no known analogue of functional equation for Goss zeta functions. Then we describe, in \S \ref{Zeta-functions-curves} and \S \ref{Drinfeld-shtuka-divisors} more recent results where another technique of interpolating zeta and $L$-values is presented, ``over curves'' with the scalars extended to 
complete algebraically closed fields (of positive characteristic). 

We will notably discuss a 2022 result of Ferraro (Theorem \ref{Ferraro-theorem}) providing a functional identity for these interpolating functions somehow reminiscent of the functional equation of Riemann's zeta function. A comparison with the classical theory of zeta functions, together with their zeta functions, will be given along the way. In \S \ref{Dedekind} we explain how Goss' theory allows to show that a certain analogue of Dedekind zeta function over a curve (relative zeta functions) extends to an entire function.
This result can be reached alternatively by using a different theory by Angl\`es, Ngo Dac and Tavares Ribeiro \cite{ANG&NGO&TAV}. This is studied, in a special case, in the appendix by Ferraro \S \ref{Appendix}. His use of the theory of \cite{ANG&NGO&TAV} allows him to reach more information on the structure of the divisor of zeroes of these functions. In particular, with the support of computational evidences, he formulates Conjecture \ref{multiplicity-conjecture} on the order of vanishing of these functions
at a certain canonical point $\Xi$ over the curve described in the text. We will not discuss this conjecture further in the present text leaving it for further developments we aim to investigate, but we mention that this agrees with the well known property that the order of vanishing at zero of the Dedekind zeta function associated to a number field $L$ equals the rank of the unit group
of the ring of integers of $L$. Therefore the potential importance of Ferraro's conjecture is that it opens the perspective of a 
factorization of these relative zeta functions over curves in terms of Artin-like $L$-functions, and class number formulas.

\subsection*{Acknowledgements} {\em To be added later}.

\section{A glimpse of Carlitz zeta values}\label{glimpse-Carlitz}

Carlitz zeta values are the first objects of the theory we want to describe.
In 1935 Carlitz \cite{CAR} proposes a formula claiming analogy with Euler's 1735 formula:
$$\sum_{n\geq1}\frac{1}{n^2}=\frac{\pi^2}{6}.$$
Carlitz' formula is
\begin{equation}\label{carlitz-formula}\sum_{\begin{smallmatrix}a\in\FF_q[\theta]\\ \text{$a$ monic}\end{smallmatrix}}\frac{1}{a^{q-1}}=\prod_{i\geq0}\left(1-\frac{\theta^{q^i}-\theta}{\theta^{q^{i+1}}-\theta}\right)^{q-1}.\end{equation}
Carlitz analyzes a more general class of formulas, just like Euler does, but let us stare at this one for a little while trying to interpret its meaning. 

On the left-hand side of \eqref{carlitz-formula}, the sum runs over the monic polynomials of the polynomial ring
$\FF_q[\theta]$ in the indeterminate $\theta$, coefficients in the finite field $\FF_q$ with $q$ elements and characteristic $p$.
To have such a formal series converging, we need to choose, over the fraction field $\FF_q(\theta)$ of $\FF_q[\theta]$, a multiplicative valuation $|\cdot|$ for which $\FF_q[\theta]$ is discrete. There is only one class of such a valuation corresponding to the infinity place of $\FF_q(\theta)$. The corresponding metric is determined by the condition $$|\theta|>1.$$
For this valuation, $\FF_q[\theta]$ is also co-compact, just like $\ZZ$ in $\RR$. 

The completion of $\FF_q(\theta)$ at this infinity place 
can be identified with the local field $\FF_q((\frac{1}{\theta}))$. The right-hand side converges for this valuation as well, but 
``much faster'' than the left-hand side. Experimentally, if we try to sum in the left it is convenient to collect together contributions coming from polynomials of same degree and then sum over degrees, then there are terms that formally cancel each other. So we already notice a benefit of Carlitz's formula: instead of computing truncations of the left-hand side
(more difficult if $q$ is a large power of the characteristic $p$), we can just approximate the fast converging
product on the right-hand side, where the factors are rational functions over $\FF_p$. Additionally, we can even lift the right-hand side to a formal series of $1+\frac{1}{\theta}\ZZ[[\frac{1}{\theta}]]$ and then reduce modulo $p$ the coefficients. It does not seem to be possible to do this directly with the left-hand side, without transforming it.

To have a first numerical taste we give an example: if $q=9$, a computer computation yields the truncation to the power $q^3$ of the uniformiser $\frac{1}{\theta}$ of $\FF_p((\frac{1}{\theta}))$, which is
\begin{multline*}
1+\frac{1}{\theta^{72}}+\frac{2}{\theta^{80}}+\frac{1}{\theta^{144}
   }+\frac{2}{\theta^{152}}+\frac{1}{\theta^{216}}+\frac{
   2}{\theta^{224}}+\frac{1}{\theta^{288}}+\frac{2}{\theta^{296}}+\frac{1}{\theta^{360}}+\frac{2}{\theta^{368}}+\frac{1}{\theta^{432}}+\frac{2}{\theta^{440}}+\frac{1}{\theta^{504}}+\\+\frac{2}{\theta^{512}}+\frac{1}{\theta^{576}}+\frac{2}{\theta^{584}}
   +\frac{1}{\theta^{648}}+\frac{2}{\theta^{656}}+\frac{2}{\theta^{720}}+\frac{1}{\theta^{728}},\end{multline*}
   where $0,1,2$ can be identified with the corresponding elements of $\FF_3\subset\FF_9$.   

There is no need to engrave these digits on a stone, their significance is somehow diminished by the fact that the choice of the uniformiser is anyway non-canonical (we choose here the uniformizer $\frac{1}{\theta}$ just because we used $\theta$ to define $\FF_q[\theta]$, but there are uncountably many choices). Depending on the choice, the above truncated expansions may artificially look simpler or conversely, more complicated. Yet it is possible to prove that
the right-hand side of (\ref{carlitz-formula}) determines an element of $\FF_q((\frac{1}{\theta}))$ which is transcendental over 
   $\FF_q(\theta)\subset\FF_q((\frac{1}{\theta}))$ (the first proof was obtained by Wade in 1941, see \cite{WAD}).

Carlitz's principal insight is that the series on the left-hand side of (\ref{carlitz-formula}) is a function field analogue of a value of Riemann's zeta function at an even positive integer, with the product on the right-hand side a rational (i. e. in $\FF_q(\theta)$) multiple of an even power of an ``analogue of the complex number $2i\pi$''. This is a first instance of a ``functional equation'' of a zeta value at a positive integer divisible by $q-1$.

\subsubsection*{More on Carlitz's zeta values}
The left-hand side of \eqref{carlitz-formula} also has a eulerian factorization. For $k\geq 1$, the following product expansion holds:
\begin{equation}\label{carlitz-zeta-values}
\zeta_{\FF_q[\theta]}(k):=\sum_{\begin{smallmatrix}a\in\FF_q[\theta]\\ \text{$a$ monic}\end{smallmatrix}}\frac{1}{a^k}=
\prod_{\begin{smallmatrix}P\in\FF_q[\theta]\\ \text{$P$ irreducible monic}\end{smallmatrix}}\left(1-\frac{1}{P^k}\right)^{-1}.
\end{equation}
There exists, for all $k\geq 1$, a functor called {\em $k$-th tensor power of the Carlitz module}
$$C^{\otimes k}:\{\FF_q[\theta]\text{-algebras}\}\rightarrow\{\FF_q[\theta]\text{-modules}\}$$ such that for all $P$ irreducible and monic in $\FF_q[\theta]$, the finite $\FF_q[\theta]$-module
$C^{\otimes k}(\FF_q[\theta]/P\FF_q[\theta])$ is isomorphic to the finite cyclic $\FF_q[\theta]$-module $\FF_q[\theta]/(P^k-1)$ in a way that is similar, in the case $k=1$, to the fact that $\GG_m(\FF_p)\cong\ZZ/(p-1)\ZZ$ as a group. For $k=1$ the above functor is called {\em Carlitz's functor} $C=C^{\otimes1}$, see the books \cite{GOS,PAPI} and $C^{\otimes k}$ is a higher dimensional variant introduced by Anderson and Thakur in \cite{AND&THA}, analogues of Tate's twists $\ZZ(k)$. Most of the properties of Carlitz's module we recall here are contained in \cite[\S 4.4]{PEL5}.

\subsubsection*{The field $\CC_\infty$ and the analogue of $2\pi i$}
We now consider a complete, algebraically closed field containing the zeta values $\zeta_{\FF_q[\theta]}(n)$.
Let $\CC_\infty$ be the completion of a separable closure of $\FF_q((\frac{1}{\theta}))$. So we can write:
$$\CC_\infty:=\widehat{\FF_q\Big(\!\Big(\frac{1}{\theta}\Big)\!\Big)^{\text{sep}}}$$
(the exponent denotes a separable closure, and the hat is the completion with respect to the unique extension of
the valuation of $\FF_q(\!(\frac{1}{\theta})\!)$). This field is algebraically closed and complete. Unlike $\CC$, it is not locally compact.
It is an
$\FF_q[\theta]$-algebra and therefore we can consider the $\FF_q[\theta]$-module $C(\CC_\infty)$. 

Let us inspect $C(\CC_\infty)$ more closely. The multiplication by $\theta$, determining the whole $\FF_q[\theta]$-module structure of $C(\CC_\infty)$, is explicitly determined by the 
$\FF_q$-endomorphism $\theta+\tau$, where $\tau$ is the $\FF_q$-automorphism $c\mapsto c^q$ over $\CC_\infty$. There is a more general but similar structure allowing to work with $C^{\otimes k}$, see \cite{AND&THA}. $C(\CC_\infty)$ comes equipped with the structure of a non-compact connected rigid analytic curve over $\CC_\infty$ which is analytically isomorphic, via an analogue of the classical exponential function called the {\em Carlitz exponential}, to the quotient
$\CC_\infty/\widetilde{\pi}\FF_q[\theta]$, which is an $\FF_q[\theta]$-module. These properties are proved in \cite[\S 4.4.2]{PEL5}. 

Everything can be computed explicitly! We have the formula: 
\begin{equation}\label{Carlitz-pi}
\widetilde{\pi}=\theta(-\theta)^{\frac{1}{q-1}}\prod_{i>0}\Big(1-\frac{\theta}{\theta^{q^i}}\Big)^{-1}\in\CC_\infty^{\times}
\end{equation}
(it is defined up to multiplication by an element of $\FF_q^\times$, therefore uniquely determined only in the case $q=2$). This is the above mentioned Carlitz's analogue of $2\pi i$, defined up to multiplication by an element of $\FF_q^\times$. Again in the case $q=9$ with the uniformizer $\frac{1}{\theta}$ the computer says that the truncation (this time to the $q^2$-th term) is
$$(-\theta)^{\frac{1}{q-1}}\Big(\theta+\frac{1}{\theta^7}+\frac{1}{\theta^{15}}+
   \frac{1}{\theta^{23}}+\frac{1}{\theta^{31}}+\frac{1}{\theta^{39}}+\frac{1}{\theta^{47}}+\frac{1}{\theta^{55}}+\frac{1}{\theta^{63}}+\frac{1}{\theta^{71}}+\frac{2}{\theta^{79}}\Big).$$
Finally, $\widetilde{\pi}$ and the left-hand side of (\ref{carlitz-formula}) are related:
$$\prod_{i\geq0}\Big(1-\frac{\theta^{q^i}-\theta}{\theta^{q^{i+1}}-\theta}\Big)^{q-1}=\frac{\widetilde{\pi}^{q-1}}{\theta-\theta^q},$$
this is easy to check and in fact a characteristic zero lift of this formula holds, but it can be also deduced from \cite[Lemma 3.2.1]{GOS}, see also \cite[Theorem 4.4.6 and Remark 4.4.7]{PEL5}. In this game of analogies initiated by Carlitz, the polynomial $\theta-\theta^q$  corresponds to the denominator $6$ in Euler's formula for $\zeta(2)$ and we can further push our search of analogue structures. Carlitz introduced function field analogues of the factorial and Bernoulli numbers  (they are elements of $\FF_q[\theta]$ and $\FF_q(\theta)$) in order
to obtain analogues of Euler's famous 1735 formulas
\begin{equation}\label{euler-formulas}
\zeta(2k)=\frac{(-1)^{k-1}B_{2k}}{2(2k)!}(2\pi)^{2k},\quad k\in\NN^*,\quad \frac{x}{e^x-1}=:\sum_{n=0}^\infty B_n\frac{x^n}{n!}.
\end{equation}
for $\zeta_{\FF_q[\theta]}(n)$, $(q-1)\mid n$, $n\geq1$ (more details in \cite{GOS}).

Carlitz further obtained an analogue of 
Clausen-von Staudt theorem. The snow ball of analogies continued to roll until recent times, notably with analogues of Herbrand-Ribet theorem \cite{TAE2,ANG&PEL&TAV}, or the analogue for ``Bernoulli-Carlitz'' elements of a conjecture in \cite{KAN} for the reduction modulo $p$ of Bernoulli numbers $B_{p-k}$ for $p$ a moving prime and $k$ a fixed odd integer $\geq 3$ (see also \cite{KAN&ZAG}), proved in \cite{ANG&NGO&TAV2}, and other results. 

\section{Goss' Dream}\label{dream}

Euler's formula for $\zeta(2)$ can also be deduced from the functional equation of Riemann's zeta function, which involves Euler's gamma function. Namely, the function of the complex variable $s$
$$\xi(s):=\pi^{-\frac{s}{2}}s(1-s)\Gamma\Big(\frac{s}{2}\Big)\zeta(s),$$
extends to an entire function over $\CC$, non-vanishing over $\RR$, and satisfies the {\em symmetric functional equation}
\begin{equation}\label{functional-equation-xi}
\xi(s)=\xi(1-s).\end{equation}
 
 We ask if  there is an analogue of Riemann's zeta function interpolating the Carlitz zeta values (\ref{carlitz-zeta-values}) that satisfies ``some kind of functional equation'' and if the formula
(\ref{carlitz-formula}) can be deduced from it. Unfortunately, while there are several candidates for zeta functions (see the specimens described in the paper), at the time of writing this, the author is unaware of such a development, this question remains at the moment unanswered; we touch at a point where analogies are superseded by deeper phenomena that are not completely understood. 

Nevertheless, this was one of Goss' dreams:

\medskip

\begin{center} {\em Deduce Carlitz formula (\ref{carlitz-zeta-values}) from a ``functional equation''.} \end{center}

\medskip

This is one of the motivations for a theory of $\zeta$ and $L$-functions that he initiated in 1979 \cite{GOS0} and that we are going to review now. 

Goss' exploration is also rooted in Tate's 1950 Thesis \cite{TAT}. It is not our task to go deep in this direction but Tate's viewpoint has some advantage in better motivating Goss' construction of his zeta and $L$-functions. 

\subsubsection*{Complex quasi-characters}
Tate interprets $\zeta$ functions not just as functions over complex numbers but as functions over spaces of 
``quasi-characters''. 
\begin{Definition}{\em Let $k$ be a completion of a global field $K$. 
A {\em quasi-character} for $k$ (in the sense of Tate) is a continuous group homomorphism 
$$k^\times\rightarrow\CC^\times.$$
}
\end{Definition}
 The target space of a quasi-character is chosen in this way because it contains classical zeta and $L$-values; it contains the real number $\zeta(2)$ for example.

Note that
\begin{equation}\label{structure-of-Ctimes}
\CC^\times=\RR_{>0}u
\end{equation}
where $u$ is the kernel of $|\cdot|$ (unit circle) and this help in computing groups of quasi-characters. If $\alpha\in\CC^\times$ then we can decompose, uniquely, $$\alpha=\tilde{\alpha}|\alpha|$$ with $\tilde{\alpha}\in u$ and $|\alpha|\in\RR_{>0}=e^\RR$. For example if
$k=\CC$ and $c$ is a quasi-character of $k^\times$,
$$c(\alpha)=\tilde{c}(\tilde{\alpha})|\alpha|^s$$ with $s\in\CC$ where $\tilde{c}:u\rightarrow u$ is a character. This allows, for example, to 
show that the group of quasi-characters $\CC^\times\rightarrow\CC^\times$ is isomorphic to 
$\ZZ\times\CC,$ where the elements in $\{0\}\times\CC$
are determined by the ``unramified'' (\footnote{This is the terminology introduced by Tate.}) quasi-characters $|\cdot|^s$ with $s\in\CC$; here we see one convincing reason for which $\zeta$ is a function of a complex variable.
More generally, we obtain an explicit description of quasi-characters of $k^\times$ for any completion $k$, see
\cite[\S 2.3]{TAT}. Given a number field $K$, Tate then studies quasi-characters over more general locally compact groups, such as the id\`ele group of $K$.

We can continue describing Tate's approach but Carlitz zeta values do not belong to $\CC$! 
We are rather interested in the topological group of $\CC_\infty$-valued quasi-characters
$$\operatorname{Hom}_c\Big(\FF_q\big(\!\big(\frac{1}{\theta}\big)\!\big)^\times,\CC_\infty^\times\Big).$$ 

\subsection{Setting and notation}
It is the moment to establish the perimeter of our discussion and to choose notations that we will adopt all along this paper.
Let $K$ be the function field of a geometrically connected smooth projective curve $X$ over $\FF_q$. We choose $\infty$ a place of $K$ of degree $d_\infty=1$ (hence we assume that $X$ is chosen so that it has at least one $\FF_q$-rational point, an hypothesis that can be removed, but it offers the advantage of an easier introduction in the theory).  We write 
\begin{equation}\label{A}
A=H^0(X\setminus\{\infty\},\mathcal{O}_X).
\end{equation} 
This is the subring of $K$ whose elements are regular away from $\infty$.  The completion $K_\infty$ of $K$ at $\infty$ is equipped with a multiplicative $\infty$-adic valuation $|\cdot|$. For $a\in A\setminus\{0\}$ we have 
$$|a|=c^{\deg(a)}$$
for a real number $c>1$ the choice of which determines $|\cdot|$ uniquely. 
The completion $\mathbb C_\infty$ of a fixed separable closure $K_\infty^{\text{sep}}$ of $K_\infty$ 
has a unique extension of the map $|\cdot|:K_\infty\rightarrow\RR_{\geq 0}$ to a multiplicative valuation. 
In $\CC_\infty$, $A$ is discrete; note also that $A$ is co-compact in $K_\infty$ (see \cite[Lemma 4.2.5]{PEL5}).
We fix a uniformizer $\pi$ of $K_\infty$ so that we can identify $K_\infty$ with the field of formal Laurent series 
$\FF_q((\pi))$. 

To introduce analogues of Carlitz zeta values \eqref{carlitz-zeta-values} we still need to define a notion of monic element in $A$. 

\begin{Definition}\label{sign-function}{\em Let $L/K_\infty$ be an algebraic extension, with residual field $\FF_L\subset\FF_q^{\text{alg}}$, identified with a subgroup of $L$.
A {\em sign function} for $L$ is a group homomorphism 
$$\sgn_L:L^\times \rightarrow \FF_L^\times$$  inducing the identity map over $\FF_L^\times\subset L^\times$.
}
\end{Definition}
Note that we are not asking $L$ to be a finite extension of $L_\infty$. Also, given any sign function $\sgn_L$, there is a unique 
extension to a group homomorphism $\sgn_L:\widehat{L}^\times\to\FF_L^\times$, where $\widehat{L}$ is the completion of $L$.
If $L=K_\infty$ we simplify the notation writing $\sgn$
instead of $\sgn_{K_\infty}$. In this case we define $A^+$ to be the set of {\em monic elements} of $A$, i.e., the set of $a \in A$ such that $\sgn(a)=1$. 

Here is a partial generalization of Carlitz's zeta values:
\begin{equation}\label{carlitz-general-A}
Z_A(n):=\sum_{d\geq0}\sum_{\begin{smallmatrix}a\in A^+\\ \deg(a)=d\end{smallmatrix}}\frac{1}{a^n}\in K_\infty,\quad n>0.
\end{equation}
The condition $n>0$ ensures convergence. The definition depends on the choice of $\sgn$. But let us consider the more general series
$$\widetilde{Z}_A(x,n):=\sum_{d\geq0}x^{-dn}\sum_{\begin{smallmatrix}a\in A^+\\ \deg(a)=d\end{smallmatrix}}\frac{1}{a^n}\in K[[x]],\quad n>0,$$ introduced in \cite[\S 2]{THA}. It can be evaluated at $x\in\CC_\infty^\times$ with $|x|\geq 1$.
By \cite[Proposition 7.2.3]{GOS} this, although depending on the selected $\sgn$, accounts for all the choices of sign function on $K_\infty$.
Indeed if $\sgn'$ is another sign function and $\pi$ is a uniformizer of $K_\infty$ in the kernel of $\sgn$, then $\sgn'(\pi):=\lambda$ with $\lambda\in\FF_q^\times$. Then, with $Z'_A(n)$ the zeta value associated to $\sgn'$,
$$Z'_A(n)=\widetilde{Z}_A(\lambda^{-n},n).$$
Note also that $Z_A(n)$ is independent of $\sgn$ in the case $(q-1)\mid n$.

\subsubsection{Example: the Carlitz case}\label{in-genus-0}

In the case when $g=0$ we come back to the settings of the Carlitz zeta values introduced earlier.
We consider the genus $0$ case where $X=\mathbb P^1$ and $\infty$ is an $\FF_q$-rational point on it. Then we can write $A=\FF_q[\theta]$  polynomial ring in $\theta$, a rational function over $\PP^1$ with a simple pole at $\infty$, regular everywhere else on $\PP^1$. In this case we have $K=\FF_q(\theta)$ and $K_\infty=\FF_q((1/\theta))$ the completion of $K$ at the place associated with $\infty$. Then, Carlitz zeta values agree with \eqref{carlitz-general-A}, they are all non-zero.

\subsection{The multiplicative structure of $\CC_\infty$}

We give an analogue of \eqref{structure-of-Ctimes} for $\CC_\infty$. The main result is Proposition \ref{proposition-uvw} which is 
essentially a reformulation of \cite[Lemma 4.2.10]{PEL5} and \cite[\S 1]{ANG&NGO&TAV}. Observe that $\CC_\infty$ contains an algebraic closure of
$\FF_q$ that we denote by $\FF_q^{\text{alg}}$ and that we identify with its residual field. 

\begin{Definition}
{\em A {\em $\QQ$-line} of $\CC_\infty^\times$ is a subgroup $\Lambda$ of $\CC_\infty^\times$ such that the map $|\cdot|$ induces an isomorphism with $|\CC_\infty^\times|$.}
\end{Definition}

A $\QQ$-line is totally ordered for $|\cdot|$ and uniquely divisible, hence torsion free. It is a $\QQ$-vector space in $\CC_\infty^\times$. 

We can construct $\QQ$-lines in the following way (see \cite[\S 1]{ANG&NGO&TAV}).
Set $x_1:=x$ with $x\in\CC_\infty^\times$ such that $|x|\neq 1$ and define a {\em division sequence} $(x_i)_{i\geq 1}$ by choosing, for all $n>1$, an element $x_n\in\CC_\infty^\times$ with $x_n^n=x_{n-1}$ (this can be done as $\CC_\infty^\times$ is algebraically closed). For $\rho\in\QQ$ we can write $\rho=\frac{m}{n!}$
with $m\in\ZZ$ and $n\geq 0$. We set $$x^\rho:=x_n^m.$$ 
This is well defined and independent of the choice of $m,n$.
Denote by $x^\QQ$ the set of all these elements of $\CC_\infty^\times$.

\begin{Lemma}
Any set $x^\QQ$ defined as above is a $\QQ$-line. Any $\QQ$-line has the above structure.
\end{Lemma}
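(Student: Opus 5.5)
We prove the two assertions separately, starting with some facts about the value group. Since $\CC_\infty$ is the completion of an algebraic closure of $K_\infty$ and $|\cdot|$ is discrete on $K_\infty$ with value group $c^{\ZZ}$, the value group is $|\CC_\infty^\times|=c^{\QQ}$. This group is a uniquely divisible, torsion free abelian group, i.e. a $\QQ$-vector space of dimension one.

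\emph{First assertion.} Let $x$ with $|x|\neq1$ and a division sequence $(x_n)$ be given. We first check that $\rho\mapsto x^\rho$ is well defined and is a group homomorphism $\QQ\to\CC_\infty^\times$.

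Well-definedness rests on the compatibility $x_{n+1}^{(n+1)}=x_n$. If $\rho=\frac{m}{n!}=\frac{m(n+1)}{(n+1)!}$, then $x_{n+1}^{m(n+1)}=x_n^m$, so the definition does not depend on the representation. The homomorphism property $x^{\rho+\rho'}=x^\rho x^{\rho'}$ follows by writing $\rho$ and $\rho'$ over a common denominator $N!$. Hence $x^\QQ$ is a subgroup of $\CC_\infty^\times$.

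Composing with $|\cdot|$ gives $\rho\mapsto|x|^\rho$, an injective homomorphism $\QQ\to c^\QQ$, injective because $|x|\neq1$. It is also surjective. Indeed, $|x|=c^{r}$ with $r\in\QQ^\times$, and then $\rho\mapsto r\rho$ is a bijection of $\QQ$. Therefore $|\cdot|$ restricted to $x^\QQ$ is an isomorphism onto $|\CC_\infty^\times|$. In particular $\rho\mapsto x^\rho$ is injective, and $x^\QQ$ is a $\QQ$-line.

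\emph{Converse.} Let $\Lambda$ be a $\QQ$-line. Because $|\cdot|$ restricted to $\Lambda$ is an isomorphism onto $c^\QQ$, the group $\Lambda$ inherits unique divisibility. Concretely, for $y\in\Lambda$ and $n\geq1$ there is a unique $z\in\Lambda$ with $z^n=y$: the element $z$ is the preimage of $|y|^{1/n}$, and $z^n=y$ holds because both sides lie in $\Lambda$ and have the same absolute value.

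Now pick $x\in\Lambda$ with $|x|\neq1$, for instance the preimage of $c$. Define $x_1=x$ and let $x_n\in\Lambda$ be the unique $n$-th root of $x_{n-1}$ in $\Lambda$. This is a division sequence, so the first part applies and shows $x^\QQ\subseteq\Lambda$, with $x^\QQ$ itself a $\QQ$-line.

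Both $x^\QQ$ and $\Lambda$ map isomorphically onto $|\CC_\infty^\times|$ under $|\cdot|$. Since $x^\QQ\subseteq\Lambda$, the map on $\Lambda$ is injective, and the image of $x^\QQ$ is already everything, we conclude $\Lambda=x^\QQ$.

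The only step requiring real input is the identification $|\CC_\infty^\times|=c^\QQ$. The inclusion $c^\QQ\subseteq|\CC_\infty^\times|$ comes from $n$-th roots of $\pi$ in the algebraically closed field $\CC_\infty$. The reverse inclusion uses two standard facts: finite extensions of $K_\infty$ have value group $c^{\frac1e\ZZ}$, and completion does not change the value group. Everything else is bookkeeping with factorial denominators.
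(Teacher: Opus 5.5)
Your proof is correct and follows essentially the same route as the paper: closure of $x^\QQ$ under products via a common factorial denominator, the isomorphism $x^\rho\mapsto |x|^\rho$ onto $|\CC_\infty^\times|$, and for the converse the unique division sequence inside $\Lambda$, giving $\Lambda=\bigcup_i x_i^{\ZZ}$. You also justify two points the paper uses without comment: that $|\CC_\infty^\times|=c^\QQ$, so $r^\QQ=|\CC_\infty^\times|$ for any $r\neq1$, and the final equality $\Lambda=x^\QQ$, which follows from injectivity of $|\cdot|$ on $\Lambda$.
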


\begin{proof}
 Consider two elements of $x^\QQ$, $y=x^\rho$ and $y'=x^{\rho'}$, with $\rho=\frac{m}{n!}$ and $\rho'=\frac{m'}{n'!}$. Without loss of generality we can suppose that $n=n'$. Then $\rho=\frac{m}{n!}$ and $\rho'=\frac{m'}{n!}$ and we can write
 $y=x_n^m$, $y'=x_n^{m'}$. We get $yy'=x_n^{m+m'}=x^{\rho+\rho'}$. A similar property holds regarding the inversion operation. Now, writing
 $r=|x|\neq 1$ one sees that the map $x^\QQ\rightarrow r^\QQ=|\CC_\infty^\times|$ defined by sending $x^\rho$ to $r^\rho$
 is a group isomorphism. Now let $\Lambda$ be a $\QQ$-line. It contains a unique sequence 
 $(x_i)_{i\geq 1}$ after a choice of $x=x_1\in\Lambda$ and setting $x_i^i=x_{i-1}$.
 Then we have the sequence of subgroups
 $$x_1^\ZZ\subset x_2^\ZZ\subset\cdots\subset\Lambda$$
 with $x_i^\ZZ$ which is of index $i$ in $x_{i+1}^\ZZ$. Clearly $\Lambda=\cup_ix_i^\ZZ$.
  \end{proof}

In $\CC^\times$ too, there are $\QQ$-lines ($\RR$-lines in fact). For example $\RR_{>0}=e^\RR$. 
%Given two $\QQ$-lines of 
%$\CC_\infty^\times$, if they meet in more than one point, then they have a group of the form $y^\ZZ$ with $|y|\neq1$ in the intersection. 
Given a $\QQ$-line $\Lambda$ and any element $x\in\Lambda\setminus\{1\}$, we write $\Lambda=x^\QQ$ (the notation is loose,  the group structure is uniquely determined after a suitable choice of a division sequence $(x_i)_i$).

Suppose we are given a sign function $\sgn_{\CC_\infty}$ for $\CC_\infty$. We can construct $x\in\CC_\infty^\times$ such that $|x|\neq1$ and $\sgn_{\CC_\infty}(x)=1$ in the following way. The group $|\CC_\infty^\times|\subset\RR_{>0}$ is infinite, so we can find $y\in \CC_\infty^\times$ with $|y|\neq1$ and choose
$$x=\frac{y}{\sgn_{\CC_\infty}(y)}$$
which lies in the kernel of $\sgn_{\CC_\infty}$.

We define the subgroups of $\CC_\infty^\times$ depending on the choice of the couple $(\sgn_{\CC_\infty},x)$:
\begin{eqnarray*}
u&=&\operatorname{Ker}(|\cdot|)\\
v&=&\operatorname{Ker}(\sgn_{\CC_\infty})\\
w&=&\{y\in\CC_\infty^\times:\exists n\text{ such that }y^n\in x^\ZZ\}
\end{eqnarray*}
(in fact $u$ does not depend on this choice).
Then $u$ is the {\em unit circle} of $\CC_\infty^\times$. If $\mathfrak{m}_{\CC_\infty}=\{z\in\CC_\infty:|z|<1\}$ is the maximal ideal corresponding to the valuation $|\cdot|$ in $\CC_\infty$, $u$ contains $1+\mathfrak{m}_{\CC_\infty}$, and equals the disjoint union 
\begin{equation}\label{union-gm}
\bigsqcup_{\zeta\in(\FF_q^{\text{alg}})^\times}\zeta+\mathfrak{m}_{\CC_\infty}
\end{equation}
 of translates of the open unit disc containing $0$. Both groups $1+\mathfrak{m}_{\CC_\infty}$ and $1+\mathfrak{m}_{K_\infty}$ are free $\ZZ_p$-modules and $1+\mathfrak{m}_{\CC_\infty}$ is even a $\QQ_p$-vector space as it is uniquely divisible. Additionally, these groups have infinite $\ZZ_p$-rank.
 
\begin{Proposition}\label{proposition-uvw}
With the above choice of $(\sgn_{\CC_\infty},x)$ we have:
\begin{enumerate}
\item $u\cap v=1+\mathfrak{m}_{\CC_\infty}$.
\item $u\cap w=(\FF^{\text{alg}}_q)^\times$.
\item $v\cap w$ is a $\QQ$-line $x^\QQ$ of $\CC_\infty^\times$.
\item We have
the direct product decomposition
\begin{equation}\label{decomposition-C-infty}
\CC_\infty^\times=(\FF_q^{\text{alg}})^{\times}\cdot x^\QQ\cdot (1+\mathfrak{m}_{\CC_\infty}).
\end{equation} 
\end{enumerate}
\end{Proposition}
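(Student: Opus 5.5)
The plan is to reduce everything to two basic facts about $\CC_\infty^\times$, and then check the four items in order (1), (2), (3), (4).

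\emph{Fact A.} $u=(\FF_q^{\text{alg}})^\times\cdot(1+\mathfrak{m}_{\CC_\infty})$, and this product is direct. This follows from \eqref{union-gm}: $\zeta+\mathfrak{m}_{\CC_\infty}=\zeta(1+\mathfrak{m}_{\CC_\infty})$. The intersection $(\FF_q^{\text{alg}})^\times\cap(1+\mathfrak{m}_{\CC_\infty})$ is trivial because distinct elements of $\FF_q^{\text{alg}}$ have distinct residues.

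\emph{Fact B.} The torsion subgroup of $\CC_\infty^\times$ is exactly $(\FF_q^{\text{alg}})^\times$. First, $1+\mathfrak{m}_{\CC_\infty}$ is torsion free: it is a $\QQ_p$-vector space, and for $n$ prime to $p$ the nontrivial $n$-th roots of unity are not $\equiv1$ modulo $\mathfrak{m}_{\CC_\infty}$. Second, any torsion element has absolute value $1$. Combined with Fact A, this gives Fact B.

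I also need that $\sgn_{\CC_\infty}$ is trivial on $1+\mathfrak{m}_{\CC_\infty}$. I expect this to be the main obstacle, since it is not formal. The group $1+\mathfrak{m}_{\CC_\infty}$ is uniquely divisible, hence a $\QQ$-vector space, and such a group admits nontrivial abstract homomorphisms to $(\FF_q^{\text{alg}})^\times\cong\bigoplus_{\ell\neq p}\QQ_\ell/\ZZ_\ell$. So I would make explicit the normalisation implicit in the paper and in \cite[\S 1]{ANG&NGO&TAV}: the sign function factors through the residue of the ``leading coefficient'', equivalently it is continuous for the discrete topology on the target and therefore kills the open subgroup $1+\mathfrak{m}_{\CC_\infty}$. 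For $K_\infty$ this holds automatically: $1+\mathfrak{m}_{K_\infty}$ is a pro-$p$ group, while $\FF_q^\times$ has order prime to $p$.

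\emph{Item (1).} Given the normalisation, the restriction of $\sgn_{\CC_\infty}$ to $u=(\FF_q^{\text{alg}})^\times\times(1+\mathfrak{m}_{\CC_\infty})$ is the projection onto the first factor. This holds because $\sgn_{\CC_\infty}$ is the identity on $(\FF_q^{\text{alg}})^\times$ by definition, and trivial on $1+\mathfrak{m}_{\CC_\infty}$. Hence $u\cap v=1+\mathfrak{m}_{\CC_\infty}$.

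\emph{Item (2).} Take $y\in u\cap w$ with $y^n=x^k$. Then $|x|^k=|y|^n=1$, and since $|x|\neq1$ this forces $k=0$. So $y$ is torsion and $y\in(\FF_q^{\text{alg}})^\times$ by Fact B. Conversely, roots of unity lie in $u$, and they lie in $w$ because $y^n=1=x^0$.

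\emph{Item (3).} I would show directly that $|\cdot|:v\cap w\to|\CC_\infty^\times|$ is an isomorphism.
\begin{itemize}
\item Injectivity: if $y\in v\cap w$ and $|y|=1$, then $y\in u\cap v\cap w=(\FF_q^{\text{alg}})^\times\cap v=\{1\}$, using (2) and the fact that $\sgn_{\CC_\infty}$ is the identity on roots of unity.
\item Value group: $|\CC_\infty^\times|$ is the divisible hull of $|K_\infty^\times|=c^{\ZZ}$, namely $c^{\QQ}=|x|^{\QQ}$.
\item Surjectivity: given $|x|^{k/n}$, choose $y$ with $y^n=x^k$. Then $\sgn_{\CC_\infty}(y)^n=\sgn_{\CC_\infty}(x)^k=1$. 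Replacing $y$ by $y/\sgn_{\CC_\infty}(y)$ keeps it in $w$, puts it in $v$, and does not change its absolute value.
\end{itemize}
This shows that $v\cap w$ is a $\QQ$-line containing $x$. By the Lemma, $v\cap w=x^{\QQ}$, with the division sequence taken inside $v\cap w$.

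\emph{Item (4).} Given $y\in\CC_\infty^\times$, use (3) to pick $z\in x^{\QQ}$ with $|z|=|y|$. Then $y/z\in u$, and Fact A writes $y/z=\zeta e$ with $\zeta\in(\FF_q^{\text{alg}})^\times$ and $e\in1+\mathfrak{m}_{\CC_\infty}$. For directness, suppose $\zeta z e=1$. Taking absolute values gives $|z|=1$, so $z=1$ by the injectivity in (3). Then $\zeta=e^{-1}$ lies in $(\FF_q^{\text{alg}})^\times\cap(1+\mathfrak{m}_{\CC_\infty})=\{1\}$.
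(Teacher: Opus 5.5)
Your proof is correct, and its skeleton is the paper's: the unit circle is $(\FF_q^{\text{alg}})^\times(1+\mathfrak{m}_{\CC_\infty})$, roots of unity are forced by $|x|\neq1$, $\sgn_{\CC_\infty}$ is the identity on roots of unity, and absolute values are matched inside $x^{\QQ}$. Where you differ is in what you actually justify and in how (4) is organised.

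\emph{The normalization behind (1).} The paper simply asserts that every element of $1+\mathfrak{m}_{\CC_\infty}$ has sign one. As you observe, this is an extra normalization and does not follow from Definition~\ref{sign-function}. Multiplying a sign function by a nontrivial homomorphism $1+\mathfrak{m}_{\CC_\infty}\to(\FF_q^{\text{alg}})^\times$ still gives a homomorphism that is the identity on roots of unity, but for it (1) fails.

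\emph{Items (3) and (4).} The paper's (3) rests on an unproved ``uniquely divisible''. Your correction $y\mapsto y/\sgn_{\CC_\infty}(y)$ supplies exactly what is missing. The paper's (4) first peels off $\sgn_{\CC_\infty}(y)$, then uses (1), and claims directness from pairwise trivial intersections. Your (4) uses only Fact A and (3), and it checks directness explicitly. So your arrangement shows that (2)--(4) hold for any homomorphism that is the identity on roots of unity; only (1) needs the normalization.

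\emph{Two minor slips in your side remarks.}
\begin{enumerate}
\item Continuity for the discrete topology on the target only makes the kernel contain some $\{1+z:|z|<r\}$. Reaching all of $1+\mathfrak{m}_{\CC_\infty}$ needs one more line: for $k\gg0$, $e^{p^k}=1+(e-1)^{p^k}$ lies in that neighbourhood, and $\sgn_{\CC_\infty}(e)$ has order prime to $p$.
\item For an abstract homomorphism on $K_\infty^\times$, ``pro-$p$'' only controls continuous maps. The right reason is that $1+\mathfrak{m}_{K_\infty}$ is $(q-1)$-divisible by Hensel's lemma, so its image in $\FF_q^\times$ is trivial.
\end{enumerate}
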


\begin{proof}
(1) Since $u$ is the unit circle, it equals $(\FF^{\text{alg}}_q)^\times (1+\mathfrak{m}_{\CC_\infty})$. 
Note that $(1+\mathfrak{m}_{\CC_\infty})\cap (\FF^{\text{alg}}_q)^\times=\{1\}$. 
Every element of $1+\mathfrak{m}_{\CC_\infty}$ has sign one and the kernel of the sign function over 
$u$ equals $1+\mathfrak{m}_{\CC_\infty}$. (2) An element $y$ of
$w$ has valuation $|y|=1$ if and only if it is a root of unity because $|x|\neq1$ by hypothesis. (3) Consider $y,y'\in v\cap w$.
We have three possibilities: $|y|<|y'|$, $|y|>|y'|$ and $|y|=|y'|$. In the latter case $\zeta:=y/y'\in v\cap w$ satisfies $|\zeta|=1$ so it is a root of unity. But $$\sgn_{\CC_\infty}(\zeta)=\zeta=1$$
 because $z\in v$. This means that $y=y'$ and $v\cap w$ is totally ordered. Since it is uniquely divisible, it is a $\QQ$-line. (4) Observe that $u\cap v=1+\mathfrak{m}_{\CC_\infty},u\cap w=(\FF^{\text{alg}}_q)^\times$ and $v\cap w=:x^\QQ$ pairwise have intersection $\{1\}$. Consider $y\in\CC_\infty^\times$. We can write
$y=\sgn_{\CC_\infty}(y)\tilde{y}$ with $\tilde{y}\in v$. There exists a unique $\tilde{x}\in x^\QQ$ such that $|\tilde{x}|=|\tilde{y}|$.
Then $|\frac{\tilde{y}}{\tilde{x}}|=1$ and $\sgn_{\CC_\infty}(\frac{\tilde{y}}{\tilde{x}})=1$ (note that $\tilde{x}\in x^\QQ\subset v$) so that $\frac{\tilde{y}}{\tilde{x}}\in u\cap v=1+\mathfrak{m}_{\CC_\infty}$.
\end{proof}

If $\Lambda$ and $\Lambda'$ are $\QQ$-lines such that $\Lambda'\subset(1+\mathfrak{m}_{\CC_\infty})\Lambda$, then 
we have at once that $\Lambda\subset(1+\mathfrak{m}_{\CC_\infty})\Lambda'$ and this defines an equivalence relation $\sim$ over
$\QQ$-lines.
 
\begin{Corollary}
There is a bijective correspondence between sign functions for $\CC_\infty^\times$ and classes of equivalence of $\QQ$-lines of $\CC_\infty^\times$.
\end{Corollary}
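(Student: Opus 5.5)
We will build maps in both directions between sign functions for $\CC_\infty$ and $\sim$-classes of $\QQ$-lines, and then check that they are inverse to each other.

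\emph{From signs to lines.} Let $\sgn_{\CC_\infty}$ be a sign function. Choose $x\in\operatorname{Ker}(\sgn_{\CC_\infty})$ with $|x|\neq1$, as in the construction preceding Proposition \ref{proposition-uvw}. Part (3) of that proposition gives the $\QQ$-line $\Lambda:=v\cap w=x^\QQ$. We assign to $\sgn_{\CC_\infty}$ the class of $\Lambda$, and must show this class does not depend on $x$. Every $\QQ$-line $\Lambda$ we obtain lies in $v$: for $y\in\Lambda$ we have $y^n\in x^\ZZ$ for some $n$, so $\sgn_{\CC_\infty}(y)^n=1$. Because $\Lambda$ is uniquely divisible, $\sgn_{\CC_\infty}(y)=\sgn_{\CC_\infty}(y^{1/n})^n$ for every $n$. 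An element of $\Lambda$ that is an $n$-th power for all $n$ has image in $(\FF_q^{\text{alg}})^\times$ that is $n$-divisible for all $n$, including $n=p$ and $n$ equal to the order; so its image is $1$. Hence $\Lambda\subset v$ in general, for any $\QQ$-line on which the sign is torsion.

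Now take two such lines $\Lambda,\Lambda'\subset v$ and $y'\in\Lambda'$. There is a unique $y\in\Lambda$ with $|y|=|y'|$. Then $y'/y\in u\cap v=1+\mathfrak m_{\CC_\infty}$ by Proposition \ref{proposition-uvw}(1). So $\Lambda'\subset(1+\mathfrak m_{\CC_\infty})\Lambda$, and the map is well defined.

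\emph{From lines to signs.} Given a $\QQ$-line $\Lambda$, the same argument as in Proposition \ref{proposition-uvw}(4) gives a direct product decomposition $\CC_\infty^\times=(\FF_q^{\text{alg}})^\times\cdot\Lambda\cdot(1+\mathfrak m_{\CC_\infty})$. Indeed, $u=(\FF_q^{\text{alg}})^\times(1+\mathfrak m_{\CC_\infty})$ as a direct product, $\Lambda\cap u=\{1\}$, and $|\cdot|$ maps $\Lambda$ isomorphically onto $|\CC_\infty^\times|$. Define $\sgn_\Lambda$ as the projection onto the first factor. It is a homomorphism and it is the identity on roots of unity, so it is a sign function. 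If $\Lambda'\sim\Lambda$, then $\Lambda'\subset(1+\mathfrak m_{\CC_\infty})\Lambda\subset\operatorname{Ker}\sgn_\Lambda$. Hence $\sgn_\Lambda$ and $\sgn_{\Lambda'}$ have the same kernel on $\Lambda'$ and on $1+\mathfrak m_{\CC_\infty}$, and both are the identity on roots of unity, so they coincide. This map is therefore also well defined on classes.

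\emph{Inverses.} Start from $\sgn_{\CC_\infty}$ with associated line $\Lambda\subset v$. Then $\sgn_\Lambda$ and $\sgn_{\CC_\infty}$ agree on all three factors of the decomposition, so they are equal. Conversely, start from $\Lambda$ and set $\sgn=\sgn_\Lambda$. Then $\Lambda\subset\operatorname{Ker}\sgn$, and by the first step any $\QQ$-line inside the kernel (in particular the one $v\cap w$ constructed from $\sgn$) is $\sim\Lambda$.

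The main delicate point is showing that a $\QQ$-line on which the sign is torsion lies entirely in $v$. Unique divisibility is needed here, and also the fact that the only root of unity which is infinitely divisible within the finite image subgroup generated is $1$. It is cleanest to argue via the homomorphism $\Lambda\to(\FF_q^{\text{alg}})^\times$: a divisible group maps onto a divisible torsion group, but the image of $x^\ZZ$ is trivial, so the image of each $x_n^\ZZ$ is a finite cyclic group of order dividing $n!$. We then check that divisibility of $\Lambda$ kills this image, by taking $n$-th roots inside $\Lambda$ and comparing orders.
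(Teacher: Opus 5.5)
Your construction is the paper's. The sign attached to $\Lambda$ is the projection in $\CC_\infty^\times=(\FF_q^{\text{alg}})^\times\cdot\Lambda\cdot(1+\mathfrak{m}_{\CC_\infty})$, i.e.\ the sign with kernel $(1+\mathfrak{m}_{\CC_\infty})\Lambda$. The reverse direction uses a $\QQ$-line inside $\operatorname{Ker}(\sgn)$, and $u\cap v=1+\mathfrak{m}_{\CC_\infty}$ shows that two $\QQ$-lines in the same kernel are equivalent. These steps are correct, including your check (implicit in the paper) that $\sgn_\Lambda$ depends only on the class of $\Lambda$.

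However, the claim you call ``the main delicate point'' is false. A $\QQ$-line containing $x$, hence contained in $w$, need not lie in $v$. Take a prime $\ell\neq p$ and a primitive $\ell$-th root of unity $\zeta$. In a division sequence $(x_n)_n\subset v$, replace $x_\ell$ by $\zeta x_\ell$, which is still an $\ell$-th root of $x_{\ell-1}$, and keep extracting roots from there. The result is a $\QQ$-line that contains $x$ and lies in $w$, but contains $\zeta x_\ell$, whose sign is $\zeta\neq 1$.

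Your argument fails because $(\FF_q^{\text{alg}})^\times$ is itself divisible: it contains $\QQ_\ell/\ZZ_\ell$ for every $\ell\neq p$. So being ``$n$-divisible for all $n$'' holds for every root of unity and forces nothing. A homomorphism $\Lambda\cong\QQ\to(\FF_q^{\text{alg}})^\times$ that kills $x^{\ZZ}\cong\ZZ$ can factor nontrivially through $\QQ/\ZZ$. Taken literally (``any $\QQ$-line on which the sign is torsion''), the hypothesis is vacuous, since the target is a torsion group. The claim would then put every $\QQ$-line into every $\operatorname{Ker}(\sgn)$, collapsing the bijection you are proving.

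Nothing in your proof actually uses this claim. The line you attach to $\sgn$ is $v\cap w$, which lies in $v$ by definition. In the inverse step the lines compared are $\Lambda\subset\operatorname{Ker}(\sgn_\Lambda)$, by construction, and $v\cap w$. So you can delete that paragraph and the final one, and the proof stands.

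The point that genuinely needs an argument is hidden in your citation of Proposition~\ref{proposition-uvw}(3): that $\operatorname{Ker}(\sgn)$ contains a $\QQ$-line at all, i.e.\ that $v\cap w$ is divisible. If $\sgn(z)=1$ and $y^n=z$, then $\sgn(y)^n=1$, and $y/\sgn(y)$ is an $n$-th root of $z$ lying in $\operatorname{Ker}(\sgn)$. This correction of roots by their signs is exactly the paper's construction $x_n=y/\sgn(y)$ in the surjectivity step. State it explicitly in place of the false lemma.
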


\begin{proof}
Consider a $\QQ$-line $\Lambda$. It yields a direct product decomposition $\CC_\infty^\times=(\FF_q^{\text{alg}})^\times(1+\mathfrak{m}_{\CC_\infty})\Lambda$. Indeed, choosing $x'\in\CC_\infty^\times$ there exists exactly one element $x\in\Lambda$ such that 
$|x|=|x'|$. This means that $\frac{x'}{x}\in\operatorname{Ker}(|\cdot|)$. So we can write $x'\in(\FF_q^{\text{alg}})^\times(1+\mathfrak{m}_{\CC_\infty})x$. 

There is a unique group homomorphism $\CC_\infty^\times\rightarrow(\FF_q^{\text{alg}})^\times$ that reduces to the identity on $(\FF_q^{\text{alg}})^\times$ and has $(1+\mathfrak{m}_{\CC_\infty})\Lambda$ as a kernel. This is a sign function. We have constructed a map $F$ from the set of $\QQ$-lines to the set
of sign functions. This map is surjective. Indeed consider a sign function $\sgn$ for $\CC_\infty$. Choose $x\in\CC_\infty^\times$ with $|x|\neq1$ and $\sgn(x)=1$, set $x_1=x$. Inductively, we can construct a division sequence $(x_n)_{n\geq 1}\subset\operatorname{Ker}(\sgn)$
by setting $x_n=\frac{y}{\sgn(y)}$ where $y\in\CC_\infty^\times$ is such that $y^n=x_{n-1}$. In this way we construct a $\QQ$-line 
$\Lambda$ in the kernel of $\sgn$. Applying $F$ to this $\QQ$-line 
we get the original sign function $\sgn$. Finally, Suppose that $\Lambda$ and $\Lambda'$ are $\QQ$-lines
such that $F(\Lambda)=F(\Lambda')=\sgn$. Then $\Lambda,\Lambda'\subset\operatorname{Ker}(\sgn_{\CC_\infty})$ and $\Lambda\sim\Lambda'$.
\end{proof}

From Proposition \ref{proposition-uvw} we deduce
\begin{equation}\label{k-infty-decomposition}
K_\infty^\times\cong\FF_q^{\times}\cdot \pi^\ZZ\cdot (1+\mathfrak{m}_{K_\infty})
\end{equation}
with $\pi$ any uniformizer of $K_\infty$ and $\sgn$ is a sign function over $K_\infty^\times$ with $\sgn(\pi)=1$, so that $1+\mathfrak{m}_{K_\infty}=1+\pi\FF_q[[\pi]]$. If $\alpha\in K_\infty^\times$ we can write, uniquely, 
\begin{equation}\label{coordinates}
\alpha=\lambda\langle\alpha\rangle \pi^n
\end{equation}
where $\lambda,\langle\alpha\rangle, \pi^n$ are the projections of $\alpha$  respectively on $\FF_q^\times$, $1+\mathfrak{m}_{K_\infty}$ and $\pi^\ZZ$.

\subsubsection{$\CC_\infty$-valued quasi-characters}
We also deduce, in analogy with \cite[Theorem 2.3.1]{TAT}:
\begin{Proposition} Choose a sign function $\sgn$ for $K_\infty$ and a uniformizer $\pi$ such that $\sgn(\pi)=1$.
To any $\CC_\infty$-valued quasi-character $c\in \operatorname{Hom}_c(K_\infty^\times,\CC_\infty^\times)$ 
we can associate a unique triple $(\mu,\tilde{c},x)$ with $\mu$ endomorphism of $\FF_q^\times$, $\tilde{c}$ a continuous group homomorphism $1+\mathfrak{m}_{K_\infty}\rightarrow 1+\mathfrak{m}_{\CC_\infty}$ and $x\in\CC_\infty^\times$
such that if $\alpha$ is in $K_\infty^\times$ and decomposes as \eqref{coordinates}, then
$$c(\alpha)=\mu(\lambda)\tilde{c}(\langle\alpha\rangle)x^n.$$
\end{Proposition}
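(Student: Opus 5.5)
The plan is to use the direct product decomposition \eqref{k-infty-decomposition}, $K_\infty^\times=\FF_q^\times\cdot\pi^\ZZ\cdot(1+\mathfrak{m}_{K_\infty})$, and restrict $c$ to each factor. Since the decomposition is direct, a homomorphism out of $K_\infty^\times$ is the same as a triple of homomorphisms out of the three factors. Setting $x:=c(\pi)\in\CC_\infty^\times$, the restriction to $\pi^\ZZ$ is $\pi^n\mapsto x^n$, and this part imposes no constraint.

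For the factor $\FF_q^\times$, I will observe that $c(\lambda)^{q-1}=c(\lambda^{q-1})=1$. Hence $c(\lambda)$ is a $(q-1)$-th root of unity in $\CC_\infty$. These roots are exactly $\FF_q^\times$, because $X^{q-1}-1$ splits over $\FF_q\subset\CC_\infty$ with $q-1$ distinct roots. So $\mu:=c|_{\FF_q^\times}$ is an endomorphism of $\FF_q^\times$.

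The main step is the factor $1+\mathfrak{m}_{K_\infty}$: I must show that $c$ maps it into $1+\mathfrak{m}_{\CC_\infty}$.
\begin{itemize}
\item First I use compactness. $1+\mathfrak{m}_{K_\infty}=1+\pi\FF_q[[\pi]]$ is compact and $c$ is continuous, so the image is compact. Since the valuation is continuous, the value group of the image is a compact, hence bounded, subgroup of $\RR_{>0}$, so it is trivial. Thus the image lies in $u$.
\item Next I use the decomposition \eqref{union-gm}, $u=(\FF_q^{\text{alg}})^\times\cdot(1+\mathfrak{m}_{\CC_\infty})$, together with the observation that $1+\mathfrak{m}_{K_\infty}$ is a pro-$p$ group. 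Every element $y$ satisfies $y^{p^k}\to1$, so $c(y)^{p^k}\to1$ by continuity. Writing $c(y)=\zeta z$ with $\zeta$ a root of unity and $z\in1+\mathfrak{m}_{\CC_\infty}$, we get $\zeta^{p^k}z^{p^k}\to 1$.
\item Since $z^{p^k}\in 1+\mathfrak{m}_{\CC_\infty}$ and the factors $\zeta^{p^k}$ are roots of unity that are $\geq$ a fixed distance from $1$ unless equal to $1$, this forces $\zeta^{p^k}=1$ for large $k$.
\item But $\zeta$ has order prime to $p$, being a nonzero element of $\FF_q^{\text{alg}}$, so $\zeta=1$.
\end{itemize}
Alternatively, I can reach the same conclusion by composing with the reduction $u\to(\FF_q^{\text{alg}})^\times$ and noting that a continuous homomorphism from a pro-$p$ group into a discrete group with no $p$-torsion is trivial. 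Either way, $\tilde c:=c|_{1+\mathfrak{m}_{K_\infty}}$ is continuous with values in $1+\mathfrak{m}_{\CC_\infty}$.

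Multiplicativity of $c$ then gives $c(\alpha)=\mu(\lambda)\tilde c(\langle\alpha\rangle)x^n$. Uniqueness holds because the triple is recovered from $c$: $\mu=c|_{\FF_q^\times}$, $\tilde c=c|_{1+\mathfrak{m}_{K_\infty}}$, and $x=c(\pi)$. The step I expect to need the most care is the third one, which controls the image of the principal units through compactness and the prime-to-$p$ torsion of the root-of-unity factor.
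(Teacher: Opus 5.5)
Your proof is correct and follows the same route as the paper: split $c$ along the direct product decomposition \eqref{k-infty-decomposition} and read off $\mu=c|_{\FF_q^\times}$, $\tilde c=c|_{1+\mathfrak{m}_{K_\infty}}$ and $x=c(\pi)$. The paper simply asserts that $c(1+\mathfrak{m}_{K_\infty})\subset 1+\mathfrak{m}_{\CC_\infty}$, and your compactness-plus-pro-$p$ argument is a valid justification of that step; likewise, your $(q-1)$-th roots of unity argument for $c(\FF_q^\times)\subset\FF_q^\times$ is more direct than the paper's appeal to torsion-freeness of the other two factors.
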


\begin{proof}
As $1+\mathfrak{m}_{K_\infty}$ and $\pi^\ZZ$ are torsion free, $c$ restricts to an endomorphism of $\FF_q^\times$ if evaluated on $\FF_q^\times$. 
It is enough to study the restriction of $c$ over the kernel of $\sgn$. Now the restriction $\tilde{c}$ of $c$ over $1+\mathfrak{m}_{K_\infty}$ has image in $1+\mathfrak{m}_{\CC_\infty}$. Finally, $\pi^\ZZ$ is cyclic infinite and 
its image is of the form $x^\ZZ$ with $x\in\CC_\infty^\times$.
\end{proof}

\begin{Remark}
{\em 
% we see that these are quite big groups. For instance, Galois theory yields that $\operatorname{End}((\FF_q^{\text{alg}})^\times)\cong\prod_{l\neq p}\ZZ_l$.
We quote a sentence of Tate in \cite[\S 2.3]{TAT} 

\begin{quote}
\ldots we shall find no model for the group of quasi-characters, or even for the group of characters, though such a model would be of the utmost importance.
\end{quote}

A similar problem arises for $\operatorname{Hom}_c(K_\infty^\times,\CC_\infty^\times)$.

\medskip

We point out that Jeong proved \cite{JEO} that 
the additive group of locally analytic endomorphisms of $1+\mathfrak{m}_{K_\infty}$ is isomorphic to the additive $\ZZ_p$ and 
the group of the locally analytic automorphisms of $1+\mathfrak{m}_{K_\infty}$ is isomorphic to $\ZZ_p^\times$.}
\end{Remark}

In \S \ref{Goss-theory-zeta-functions} we shall first give a synthesis of Goss' construction of zeta functions where he constructs certain topological subgroups of $\operatorname{Hom}_c(K_\infty^\times,\CC_\infty^\times)$. From \S \ref{Zeta-functions-curves} on we propose an alternative approach, where zeta functions are constructed as functions defined over $X$, with the scalars extended.

\section{Goss theory of zeta functions}\label{Goss-theory-zeta-functions}
We choose $(X,\infty,\sgn)$ so that we have the rings $A,K,K_\infty,\CC_\infty$ discussed previously. Choosing a uniformizer 
$\pi$ of $K_\infty$ with $\sgn(\pi)=1$ we have a decomposition \eqref{k-infty-decomposition}.

{\em Goss plane} is a topological subgroup $\mathbb{S}_\infty$ of $\operatorname{Hom}_c(K_\infty^\times,\CC_\infty^\times)$ containing a locally discrete copy of $\ZZ$ as a subgroup. This space is homeomorphic to $\CC_\infty^\times\times \ZZ_p$
and can be easily described by using the tools developed above and \eqref{coordinates}. 

Homeomorphisms are given associating 
$$s:=(x,y)\in\mathbb{S}_\infty\mapsto c_s:=\Big(\alpha=\lambda\langle\alpha\rangle\pi^n\mapsto \lambda\langle\alpha\rangle^y x^n\Big)\in \operatorname{Hom}_c(K_\infty^\times,\CC_\infty^\times)$$
and depend on the choice of a uniformizer $\pi$ of $K_\infty$. The group homomorphism $\ZZ\rightarrow\mathbb{S}_\infty$ is given by $n\mapsto (\pi^{-n},n)$  and also depends on that choice.
Goss used this group $\mathbb{S}_\infty$ as the space of ``complex exponents'' in his theory of zeta and $L$-functions, see his book \cite{GOS}. 

Fundamental in this is his notion of exponentiation by an exponent in $\mathbb{S}_\infty$. 

%In classical number theory there are several processes to define zeta and $L$-functions among which:
%\begin{enumerate}
%\item summing over subsets of discrete subgroups of $\CC$ (partial zeta functions)
%\item summing over ideals (Dedekind zeta function)
%\item eulerian products.
%\end{enumerate}

\subsection{Goss' exponentiation}\label{Goss-exponentiation-section}
We fix a uniformizer $\pi$ of $K_\infty$ and a sign function $\sgn:K_\infty^\times\rightarrow\FF_q^\times$ such that $\sgn(\pi)=1$.
Let $a$ be an element of $A^+$ (recall that $A^+=A\cap\operatorname{Ker}(\sgn)$). If $s=(x,y)\in\mathbb{S}_\infty$ Goss introduces the {\em exponentiation} of $a$ by $s$:
\begin{equation}\label{Goss-exponentiation}a^s:=c_s(a)=\langle a\rangle^y x^{\deg(a)}\in \CC_\infty^\times.
\end{equation}
If $s=(\pi^{-n},n)$ we get
$$a^s=\pi^{-n\deg(a)}\langle a\rangle^n=(\pi^{-\deg(a)}\langle a\rangle)^n=a^n,$$ because by \eqref{coordinates} $$a=\pi^{-\deg(a)}\langle a\rangle.$$

We can also define exponentiations of fractional ideals, hence providing group homomorphisms $\mathcal{I}_A\rightarrow\CC_\infty^\times$ where $\mathcal{I}_A$ is the group of invertible ideals of $A$. 
Consider a non-zero fractional ideal $I$ of $A$. Since the class group of $A$ is finite, there exists an integer $h>0$ such that
$$I^h=(a_I),$$ that is, $I^h$ is principal generated by $a_I\in A^+=A\cap \operatorname{Ker}(\sgn)$ (the choice is possible because $A^\times=\FF_q^\times$). 
Hence $h\deg(I)=\deg(a_I)$. We can use \eqref{coordinates} and write 
$$a_I=\langle a_I \rangle \pi^{-\deg(a)}.$$
The group $v$ in Proposition \ref{proposition-uvw} is uniquely divisible and we can set, for $s=(x,y)\in\mathbb{S}_\infty$,
\begin{equation}\label{I-to-the-s}
I^s:=\langle a_I \rangle^{\frac{y}{h}}x^{\deg(I)}\in\CC_\infty^\times
\end{equation}
(recall that $1+\mathfrak{m}_{\CC_\infty}$
is a $\QQ_p$-vector space for the multiplicative structure and we can identify $\frac{y}{d}$ with an element of $\QQ_p$).

\begin{Lemma}\label{lemma-ext-ideals}
For all $s\in\mathbb{S}_\infty$ the map $I\mapsto I^s$ defines a group homomorphism 
$\mathcal{I}_A\rightarrow\CC_\infty^\times$. Moreover, the 
field $V$ generated by $K$ and the values $I^{s_n}$ in $\CC_\infty$ with $s_n=(\pi^{-n},n)$ and $n\in\ZZ$ is a finite extension of $K$. 
\end{Lemma}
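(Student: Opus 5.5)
First we check that the definition \eqref{I-to-the-s} does not depend on the choice of $h$ and of the monic generator $a_I$. Since $A^\times=\FF_q^\times$ and $\sgn$ is the identity on $\FF_q^\times$, the generator $a_I\in A^+$ of $I^h$ is unique once $h$ is fixed. Now suppose $I^h=(a_I)$ and $I^{h'}=(a'_I)$. Then $I^{hh'}=(a_I^{h'})=(a_I'^{h})$, and both generators are monic, so $a_I^{h'}=a_I'^{h}$. Taking the projection onto $1+\mathfrak{m}_{K_\infty}$, which is a group homomorphism by \eqref{coordinates}, gives $\langle a_I\rangle^{h'}=\langle a'_I\rangle^{h}$. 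Because $1+\mathfrak{m}_{\CC_\infty}$ is uniquely divisible and the $\ZZ_p$-exponentiation is compatible with $\QQ_p$-scalars, we get $\langle a_I\rangle^{y/h}=\langle a'_I\rangle^{y/h'}$. The factor $x^{\deg(I)}$ does not involve $h$ at all, and $\deg(I)$ is additive.

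For the homomorphism property, take $I,J\in\mathcal{I}_A$ and a common exponent $h$ with $I^h=(a_I)$ and $J^h=(a_J)$. Then $(IJ)^h=(a_Ia_J)$, and $a_Ia_J$ is monic. Multiplicativity of $\langle\cdot\rangle$ gives $\langle a_Ia_J\rangle^{y/h}=\langle a_I\rangle^{y/h}\langle a_J\rangle^{y/h}$, and $\deg(IJ)=\deg I+\deg J$. Hence $(IJ)^s=I^sJ^s$. For principal $I=(a)$ with $a\in A^+$ we can take $h=1$, and this recovers \eqref{Goss-exponentiation}, which confirms consistency.

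For the finiteness of $V$, fix $s_n=(\pi^{-n},n)$. Then
$$I^{s_n}=\langle a_I\rangle^{n/h}\pi^{-n\deg(I)}=\big(\langle a_I\rangle^{1/h}\pi^{-\deg(I)}\big)^n,$$
so it suffices to treat $\beta_I:=\langle a_I\rangle^{1/h}\pi^{-\deg I}=I^{s_1}$. We have $\beta_I^h=\langle a_I\rangle\pi^{-h\deg I}=a_I\in K$, using $h\deg I=\deg a_I$ and $a_I=\langle a_I\rangle\pi^{-\deg a_I}$. So each $\beta_I$ is a root of $T^h-a_I$ and is algebraic over $K$. By the homomorphism property $I\mapsto\beta_I$ is multiplicative, and the values at principal ideals $(a)$ with $a\in A^+$ are $a\in K$. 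Therefore $\beta_I$ depends, up to multiplication by elements of $K^\times$, only on the class of $I$ in the finite class group $\operatorname{Pic}(A)$. Choosing representatives $I_1,\dots,I_r$ of the classes, we get $V=K(\beta_{I_1},\dots,\beta_{I_r})$, a finite extension of $K$. This already includes negative $n$ and fractional ideals, since $\beta_{I^{-1}}=\beta_I^{-1}$.

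The main obstacle is the well-definedness step. It requires the fractional exponent $\langle a\rangle^{y/h}$ to be the unique $h$-th root in $1+\mathfrak{m}_{\CC_\infty}$. This comes from the unique divisibility of $1+\mathfrak{m}_{\CC_\infty}$, which handles both $h$ prime to $p$ and $h$ a power of $p$, where the root is unique because Frobenius is injective. It also requires that $y\mapsto\langle a\rangle^{y}$ is compatible with this division. Once that is settled, the remaining steps are formal.
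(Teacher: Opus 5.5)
Your proof is correct and follows essentially the paper's route. The paper obtains the homomorphism property from the unique divisibility coming from Proposition \ref{proposition-uvw}, and cites \cite[Proposition 8.2.9]{GOS} for finiteness; that argument is exactly your observation that $(I^{s_1})^h=a_I\in K$, combined with the finiteness of the class group, and you have simply written out the details that the paper leaves to those references.
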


\begin{proof}
The first part of the lemma follows from Proposition \ref{proposition-uvw}. The second part is proved in \cite[Proposition 8.2.9]{GOS}.
\end{proof}
Other properties of this ideal exponentiation map are collected in \cite[\S 8.2]{GOS}. 

\subsubsection{Goss' partial zeta functions}\label{Goss-partial-zeta}
Choose a uniformizer $\pi$ and a sign function $\sgn$ for $K_\infty$ with $\sgn(\pi)=1$ as above. 
For a non-zero fractional ideal $I$ of $A$, we introduce the {\em partial zeta function}
\begin{equation}\label{definition-partial-zeta}
Z_I(s):=\sum_{d\geq 0}\sum_{\begin{smallmatrix}a\in I^+\\ \deg(a)=d\end{smallmatrix}}a^{-s}
\end{equation}
where $I^+$ denotes the set of elements of $I$ that are in the kernel of $\sgn:K_\infty^\times\rightarrow\FF_q^\times$.

The series $Z_I(s)$ converges for the valuation of $\CC_\infty$ if the ``real part of $s$ is $>1$''. The real part of $s$ is by definition the real number $|x|$ (observe that choosing any total order on $A^+$, we have that the sequence $a\mapsto \deg(a)$
tends to infinity). 
Once we choose a uniformizer $\pi$ of $K_\infty$, we have a group map $\ZZ\rightarrow\mathbb{S}_\infty$ and we can identify $\ZZ$ with its image. So we can write
\begin{equation}\label{goss-partial-zeta}
Z_I(n):=Z_I(s_n)=\sum_{\begin{smallmatrix}a\in I^+\end{smallmatrix}}a^{-s_n}\in K_\infty, \quad n\geq0
\end{equation}
where
\begin{equation}\label{definition-sn}
s_n:=(\pi^{-n},n)\in\mathbb{S}_\infty,
\end{equation}
and the values do not depend on $\pi$.  In the case $X=\PP^1$ with 
the usual point $\infty$ and the usual sign function, $A=\FF_q[\theta]$ and we are back to Carlitz zeta values \eqref{carlitz-general-A}. 

In fact, $Z_I$ defines a continuous function $\mathbb{S}_\infty\rightarrow\CC_\infty$, see below.

\subsection{Goss' zeta functions}
The {\em Goss'  zeta function} associated to the triple $(X,\infty,\sgn)$ is the formal series:
\begin{equation}\label{definition-Goss-zeta}
\zeta_A(s):=\sum_{\begin{smallmatrix}I\text{ non-zero }\\ \text{ ideal of }A\end{smallmatrix}}I^{-s}.\end{equation}
This series too, converges for $s=(x,y)\in\mathbb{S}_\infty$ with $|x|>1$. Indeed the sequence $I\mapsto \deg(I)$, running over the ideals of $A$, tends to infinity.
Additionally, over the same ``half-plane'' of Goss' plane, the zeta function $\zeta_A(s)$ 
has a converging eulerian product expansion
$$\zeta_A(s)=\prod_{P}\Big(1-\frac{1}{P^s}\Big)^{-1},$$
where the product runs over the maximal ideals $P$ of $A$, by Lemma \ref{lemma-ext-ideals}. Note that 
if $s=(x,y)$ is such that $|x|>1$, $P^s\neq1$ so that $\zeta_A(s)$ does not vanish for $|x|>1$. We have familiarity with such statements, for Riemann's zeta function, $\zeta(s)$ does not vanish for $s=x+iy$ with $x,y\in\RR$, $x>1$.

For many reasons this can be considered as an analogue of Riemann's zeta function attached to a triple $(X,\infty,\sgn)$. In this text we do not tackle more general $L$-functions, but we keep focussing on these objects to seize not only the analogies, but also their important differences if compared with the classical theory of zeta functions.

To investigate the first properties of $\zeta_A$ we decompose it in combination of partial zeta functions. We partition the group of invertible ideals $\mathcal{I}_A$ into distinct cosets $\mathcal{I}_1,\ldots,\mathcal{I}_n$ modulo 
the subgroup of the principal ideals, choosing, as representatives, invertible ideals $I_1,\ldots,I_n$ of $A$ such that $I_i^{-1}$ is an ideal of $A$ for all $i$. By Lemma \ref{lemma-ext-ideals} we have:
\begin{equation}\label{dec-total-partial}
\zeta_A(s):=\sum_{i=1}^nI_i^{-s}Z_{I_i^{-1}}(s),
\end{equation}
an identity that defines a continuous function over the ``half-plane'' of the elements $s=(x,y)\in\mathbb{S}_\infty$ such that $|x|>1$. For $A=\FF_q[\theta]$ as
above, using \eqref{dec-total-partial}, we find that 
$\zeta_A(s)$ agrees with $Z_A(s)$ and therefore, this function provides a continuous interpolation of Carlitz zeta values.
Properties of the partial zeta functions immediately imply corresponding properties for the zeta functions $\zeta_A$.

Goss also developed, more generally, a theory parallel to that of Dedekind zeta functions. Classically, Dedekind zeta functions are associated to
number field extensions $L/\QQ$. In Goss' theory, after the choice of a triple $(X,\infty,\sgn)$ one considers $E$ a finite extension of $K$ and $B$ the integral closure of $A$ in $E$. 
We have the norm map $N_{E/K}:\mathcal{I}_B\rightarrow\mathcal{I}_A$ which defines a group homomorphism from the 
group of the invertible ideals of $B$ to that of $A$. Then we define, for $s\in\mathbb{S}_\infty$:
$$\zeta_{E,A}(s):=\sum_{\begin{smallmatrix}\mathfrak{a}
\text{ ideal of }B\end{smallmatrix}}N_{E/K}(\mathfrak{a})^{-s}=\prod_{\begin{smallmatrix}\mathfrak{p}
\text{ maximal ideal of }B\end{smallmatrix}}\Big(1-\frac{1}{N_{E/K}(\mathfrak{p})^{s}}\Big)^{-1}$$
that are convergent series for $s=(x,y)$ with $|x|>1$. The key tool is again the exponentiation of ideals in $\mathcal{I}_A$ that we discussed in \S \ref{Goss-exponentiation-section}.

For the partial zeta functions $Z_I$ one of the most important properties, discovered by Goss, is:

\begin{Lemma}\label{analytic-extension-goss}
For all $I$ non-zero ideal of $A$, $Z_I$
extends to a continuous function $\mathbb{S}_\infty\rightarrow\CC_\infty$.
\end{Lemma}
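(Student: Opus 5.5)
The plan is Goss' classical argument: regroup the series by degree, and show that each degree block vanishes for large degree at every negative integer exponent. Write, for $s=(x,y)\in\mathbb{S}_\infty$,
$$Z_I(s)=\sum_{d\geq0}x^{-d}\sum_{\substack{a\in I^+\\ \deg(a)=d}}\langle a\rangle^{-y},$$
and set $S_d(y):=\sum_{a\in I^+,\,\deg(a)=d}\langle a\rangle^{-y}$. Each $S_d$ is a finite sum of continuous functions of $y\in\ZZ_p$, because $1+\mathfrak{m}_{K_\infty}$ is a $\ZZ_p$-module. So $Z_I$ is a power series in $x^{-1}$ whose coefficients are continuous in $y$. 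To obtain continuity on all of $\mathbb{S}_\infty\cong\CC_\infty^\times\times\ZZ_p$, it suffices to show that $|S_d(y)|$ tends to $0$ faster than any geometric sequence as $d\to\infty$, uniformly in $y\in\ZZ_p$. Then the series in $x^{-1}$ converges locally uniformly for all $x\in\CC_\infty^\times$, and the limit is continuous.

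\textbf{Reduction to negative integer exponents.} Write $\langle a\rangle=1+m$ with $m\in\mathfrak{m}_{K_\infty}$. Expand $\langle a\rangle^{-y}=\sum_{j\geq0}\binom{-y}{j}m^j$, with binomial coefficients taken in $\FF_p$ (these are continuous in $y$). Alternatively, approximate $y$ $p$-adically by positive integers $-y\equiv k \pmod{p^N}$. In the latter form, $\langle a\rangle^{-y}$ and $\langle a\rangle^{k}$ agree up to an error of size $|\pi|^{p^N}$, uniformly in $a$. Hence it suffices to bound $S_d$ at $y=-k$ with $k\ge 0$ an integer, namely
$$S_d(-k)=\pi^{kd}\sum_{\substack{a\in I^+\\ \deg a=d}}a^{k},$$
with a bound uniform in $k$ up to the error term.

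\textbf{Key vanishing step.} The set $\{a\in I^+:\deg a=d\}$ is an affine translate $a_0+W_d$ of the $\FF_q$-vector space $W_d=\{b\in I:\deg b<d\}$. By Riemann–Roch, $\dim_{\FF_q}W_d= d-\deg I+1-g$ for $d$ large. Expanding $(a_0+b)^k$ with Lucas' theorem, the sum over $b\in W_d$ of $b^{j}$ vanishes whenever $\ell_q(j)<(q-1)\dim W_d$, where $\ell_q$ is the sum of the base-$q$ digits. This is the standard lemma that power sums over an $\FF_q$-space of dimension $r$ vanish for digit sum less than $r(q-1)$, and it follows by writing $b$ in a basis and summing each coordinate over $\FF_q$. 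Consequently $\sum_{\deg a=d}a^k=0$ as soon as $(q-1)(d-\deg I+1-g)>\ell_q(k)$.

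For the uniform bound I combine this with the $p$-adic approximation. Given $N$, choose $k$ with at most $N$ base-$q$ digits approximating $-y$, so $\ell_q(k)\le N(q-1)$. Then $S_d(y)$ is $O(|\pi|^{p^N})$ once $d> N+\deg I+g$. Choosing $N\approx d-\deg I-g$ gives $|S_d(y)|\le |\pi|^{p^{d-C}}$, which is doubly exponentially small and therefore beats any $|x|^{d}$.

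\textbf{Main obstacle.} I expect the main difficulty to be this uniformity bookkeeping: checking that the error from the $p$-adic approximation is controlled independently of $a$ and of $d$. This holds because $|\langle a\rangle^{p^N}-1|\le|\pi|^{p^N}$. The vanishing lemma itself is routine, and is found in Goss' book \cite[\S 8]{GOS}, which I would cite for details.
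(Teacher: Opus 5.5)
Your argument is correct and is essentially the proof the paper defers to in \cite[\S 8]{GOS}: degree blocks vanish at negative integer exponents of small digit sum by \cite[Lemma 8.8.1]{GOS}, combined with $p$-adic approximation of the exponent uniformly in $a$. It is also exactly the mechanism the paper itself runs in Proposition \ref{rho-r-E}, where $\langle a\rangle^{-1}$ is traded for $\langle a\rangle^{q^{r+1}-1}$ at cost $|\pi|^{q^{r+1}}$, and the $(r+1)(q-1)$ linear forms play the role of your digit sum $\ell_q(k)$. The only slip is harmless: the elements of $I$ of degree $<d$ form a space of dimension $d-\deg I-g$ for large $d$, not $d-\deg I+1-g$, and with the correct count your final condition $d>N+\deg I+g$ is exactly what is needed.
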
 
A detailed proof can be found in \cite{GOS}, see also \cite{THA0}. We omit the details also because in the present paper, inspired by Goss' work, we will give a proof of one of
these entireness results, for a different class of functions, see Theorem \ref{zeta-E-A-are-entire}, where we use tools of essentially the same type.

Lemma \ref{analytic-extension-goss} can be briefly explained by the fact that finite sums of the type
$$S_{I,d}(s):=\sum_{\begin{smallmatrix}a\in I^+\\ \deg(a)=d\end{smallmatrix}}a^{-s},$$ if non-zero,
may have large vanishing finite sub-sums as it follows from the proof of \cite[Theorem 8.9.2]{GOS}, and consequently, for fixed
$s$ with $d\rightarrow\infty$, they have multiplicative valuation going to zero faster than what trivial estimates could detect. This is something that occurs in our characteristic $p>0$ setting. Also the more general Dedekind-like
functions $\zeta_{E,A}$ introduced above define continuous functions $\mathbb{S}_\infty\rightarrow\CC_\infty$.

In \cite[Definition 8.5.1]{GOS} Goss introduces the notion of {\em entire function over $\mathbb{S}_\infty$}, a continuous $\ZZ_p$-family of $\CC_\infty$-valued entire functions of the variable $x^{-1}$ the members of which are series that are uniformly convergent on bounded subsets of $\CC_\infty$.
Goss interprets the existence of entire extensions of his zeta functions as one of the ``signs confirming the possible existence of functional equations''.

\subsubsection{Negative values}\label{negative-values}
There are elements $s\in\mathbb{S}_\infty$ such that $Z_I(s)$ vanish regardless of the choice of $I$. We have a phenomenon of ``trivial zeroes at negative values'' that is described in the following result.
\begin{Lemma}\label{lemma-partial-negative}
For all $n\leq 0$ we have $Z_I(n)\in A$. Moreover, $Z_I(0)=1$ if $I$ is trivial, while otherwise $Z_I(0)=0$, and if $q-1\mid n$ with $n<0$, then
$Z_I(n)=0$.
\end{Lemma}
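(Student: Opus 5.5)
The plan is to reduce everything to power sums of the elements of a finite-dimensional $\FF_q$-vector space, which is the standard mechanism behind trivial zeroes in characteristic $p$. For $d\in\ZZ$ put $I(d):=\{a\in I:\ \deg(a)\leq d\}$ (with $0$ included). By Riemann--Roch this is a finite-dimensional $\FF_q$-vector space, and $\dim_{\FF_q}I(d)\to\infty$ as $d\to\infty$. For $n=-m\leq 0$ we have $s_n=(\pi^{m},-m)$, and by \eqref{Goss-exponentiation} $a^{-s_n}=a^m$ for $a\in I^+$. So
$$Z_I(n)=\sum_{d\geq 0}S_d,\qquad S_d:=\sum_{\substack{a\in I^+\\ \deg(a)=d}}a^m .$$
The first goal is to show that $S_d=0$ for all $d$ large. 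This makes $Z_I(n)$ a finite sum of elements of $I$ (or of $A$ after clearing the fixed denominator in the fractional case), which gives the first claim.

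\textbf{Step 1 (vanishing of power sums).} I will prove the classical lemma: if $V\subset\CC_\infty$ is an $\FF_q$-subspace of dimension $r$, $a_0\in\CC_\infty$, and $0\leq m<(q-1)r$, then $\sum_{v\in V}(a_0+v)^m=0$. Write $v=\sum_i c_ib_i$ with $c_i\in\FF_q$ and expand multinomially. Each monomial produces a product of sums $\sum_{c\in\FF_q}c^{k_i}$. Such a sum is zero unless $k_i\geq1$ and $(q-1)\mid k_i$, in which case it equals $-1$. If $\sum_ik_i\leq m<(q-1)r$, some $k_i$ fails this condition, so every monomial contributes zero.

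The set $\{a\in I^+:\deg(a)=d\}$ is either empty or a coset $a_0+I(d-1)$. Indeed, it is nonempty exactly when $I(d)\neq I(d-1)$, and then it is a translate of $I(d-1)$ because sign and degree are compatible. Applying Step 1 with $V=I(d-1)$ gives $S_d=0$ as soon as $(q-1)\dim I(d-1)>m$. This settles finiteness and integrality.

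\textbf{Step 2 (the case $q-1\mid m$, $m>0$).} Since every nonzero $a\in I(d)$ decomposes uniquely as $\lambda a'$ with $\lambda\in\FF_q^\times$ and $a'$ monic, and $\lambda^m=1$, we get
$$\sum_{a\in I(d)}a^m=(q-1)\sum_{a'\in I^+,\ \deg(a')\leq d}a'^m=-\sum_{e\leq d}S_e .$$
Here $0^m=0$ is used because $m>0$. The left-hand side vanishes for $d\gg0$ by Step 1 applied with $a_0=0$. Hence $Z_I(n)=\sum_e S_e=0$.

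\textbf{Step 3 ($n=0$).} Now each $S_d$ is the cardinality of the coset $a_0+I(d-1)$, taken in $\FF_p$. This is $q^{\dim I(d-1)}$, which vanishes modulo $p$ unless $I(d-1)=0$. The only such $d$ with $I(d)\neq I(d-1)$ is the minimal degree $d_0$ of a nonzero element of $I$, where the unique monic element contributes $1$. Equivalently, the argument of Step 2 with $0^0=1$ gives $\sum_{e\leq d}S_e\equiv 1-|I(d)|\equiv 1\pmod p$.

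\textbf{Main obstacle.} This count is where I expect the real difficulty. It shows $Z_I(0)=1$ for every nonzero $I$, with no apparent dependence on whether $I$ is trivial. So the dichotomy ``$1$ if trivial, $0$ otherwise'' must come from a convention I have not matched, such as the normalisation of $s_0$, of $I^+$, or the meaning of ``trivial''. Before writing the final argument I would re-examine those definitions. If the statement concerns the ideal-normalised quantity $I^{-s}Z_{I^{-1}}(s)$ from \eqref{dec-total-partial}, or a sign-twisted sum, the same coset computation would be redone with the extra factor $\sgn(a)^{-1}$. That sum vanishes by orthogonality of characters on $\FF_q^\times$ exactly when the twist is nontrivial.
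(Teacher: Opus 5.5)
Your Steps 1 and 2 follow the paper's argument. The paper also writes each nonempty $I^+(d)$ as a coset $a_0+I(<d)$. It uses the vanishing of power sums over such cosets (Goss' Lemma 8.8.1, which your Step 1 proves directly) to get stabilization and $Z_I(n)\in A$. For $(q-1)\mid n$ it identifies the cumulative monic sums with $-\sum_{a\in I(<d+1)}a^{-n}$, which vanishes for $d$ large.

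Your Step 3 is also correct. The ``main obstacle'' is not a gap in your reasoning: the clause ``otherwise $Z_I(0)=0$'' is false as stated. The paper settles $n=0$ by asserting that the only nonempty $I^+(d)$ with cardinality prime to $q$ is $A^+(0)=\{1\}$. This overlooks the least degree $d_0$ of a nonzero element of $I$. There $I(<d_0)=\{0\}$, so $I^+(d_0)$ is a singleton, while every other nonempty $I^+(d)$ has cardinality a positive power of $q$. Hence $Z_I(0)=1$ for every nonzero ideal $I$. For instance, with $A=\FF_q[\theta]$ and $I=\theta A$ the partial sums are $1+q+\cdots+q^{D-1}\equiv 1$. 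More generally, $Z_{aA}(s)=a^{-s}Z_A(s)$ for monic $a$, and $a^{-s_0}=1$.

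A global check confirms this. By \eqref{dec-total-partial}, $\zeta_A(0)=\sum_i Z_{I_i^{-1}}(0)$. Via $a\mapsto aI_i$, this is the stable value modulo $p$ of $\#\{J\subseteq A:\deg J\leq D\}$, i.e.\ of the coefficients of $L_X(u)/((1-u)(1-qu))$, where $L_X$ is the $L$-polynomial of $X$. That stable value is $L_X(1)=h_A$ (recall $d_\infty=1$). The paper's dichotomy would instead give $\zeta_A(0)=1$, a contradiction whenever $h_A\not\equiv 1\pmod p$.

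None of the conventions you list rescues the clause. Since $I^{s_0}=1$, the normalised term $I^{-s}Z_{I^{-1}}(s)$ also equals $1$ at $s=0$, and no sign twist occurs in \eqref{definition-partial-zeta}. So do not go hunting for a convention: state and prove the corrected value $Z_I(0)=1$ for all nonzero $I$. With that amendment, your argument proves every remaining assertion of the lemma.
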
 

\begin{proof}
Write $I^+(d)$ for the set of elements of $I$ that are monic (that is, in the kernel of
$\sgn$, for the choice we did initially) and of degree $d$. Similarly, write $I(<d)$ for the $\FF_q$-vector space of the elements of $I$ that 
have degree $<d$. Then either $I^+(d)=\{0\}$ (this for example happens if $I=A$ and $d$ is a Weierstrass gap) or   
$I^+(d)=a_0+I(<d)$ for all $a_0\in I^+(d)$. By \cite[Lemma 8.8.1]{GOS} (see also \cite[Theorem 5.3.1]{THA0})
$$S_{I,d}(-n)=\sum_{a\in I^+(d)}a^n=\sum_{a\in I(<d)}(a_0+a)^n$$
vanishes for all $d$ such that $q-1$ times the dimension of $I(<d)$ is strictly larger than $n$. In particular the sequence of 
partial sums \eqref{definition-partial-zeta} stabilizes when $s=s_n$ for $n\leq 0$.
This proves that 
$Z_I(n)\in A$ for all $n\leq 0$. Now, suppose additionally that $q-1\mid n$ with $n<0$. Then, by the fact that $\FF_q^\times=A^\times$
(we use the hypothesis that $\infty$ has degree one),
$$S_{I,d}(n)=(-1)^n\sum_{a\in I(<d+1)}a^n.$$
By the previously proved property, 
$$Z_I(-n)=(-1)^n\sum_{a\in I(<d+1)}a^n$$
for all $d$ large enough (the sum on the right does not depend on $d$ if $d$ is large), but this sum vanishes if $n<(q-1)\dim_{\FF_q}(I(<d+1))$ again by Lemma \cite[Lemma 8.8.1]{GOS} and we can find $d$ such that this inequality holds, while the explicit special values $Z_A(0)=1$ etc. follow easily from the fact that the only set $I^+(d)$ that is non-empty and such that $q$ does not divide its cardinality is $A^+(0)=\{1\}$.
\end{proof}

By \eqref{dec-total-partial} and Lemma \ref{lemma-partial-negative} we deduce:

\begin{Corollary}\label{corollary-trivial zeroes}
The Goss zeta function $\zeta_A:\mathbb{S}_\infty\rightarrow\CC_\infty$ vanishes over the image of $-(q-1)\ZZ_{>0}=\{1-q,2-2q,\ldots\}$ in $\mathbb{S}_\infty$. 
\end{Corollary}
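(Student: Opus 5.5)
The plan is to deduce the corollary directly from the decomposition \eqref{dec-total-partial} and Lemma \ref{lemma-partial-negative}, after checking that \eqref{dec-total-partial} still makes sense at the negative integers $s_n$, which lie outside the half-plane $|x|>1$.

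First, for each representative $I_i$ in \eqref{dec-total-partial}, I would regard $s\mapsto I_i^{-s}Z_{I_i^{-1}}(s)$ as a continuous function on all of $\mathbb{S}_\infty$. The factor $I_i^{-s}$ is continuous on $\mathbb{S}_\infty$: it is given explicitly by \eqref{I-to-the-s} as $\langle a_{I_i}\rangle^{-y/h}x^{-\deg(I_i)}$, which is nonzero and continuous in $(x,y)$. The factor $Z_{I_i^{-1}}$ extends continuously by Lemma \ref{analytic-extension-goss}. So the right-hand side of \eqref{dec-total-partial} defines a continuous function on $\mathbb{S}_\infty$. This is the extension of $\zeta_A$ meant in the statement, and it is the one Goss uses.

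Next, I evaluate at $s=s_{n}$ with $n=-m(q-1)$, $m\geq1$. The image of the integer $n$ in $\mathbb{S}_\infty$ is $s_n=(\pi^{-n},n)$, by \eqref{definition-sn}. Each summand equals $I_i^{-s_n}Z_{I_i^{-1}}(n)$. The ideals $I_i^{-1}$ are nonzero ideals of $A$ by our choice of representatives, and $q-1\mid n$ with $n<0$. Hence Lemma \ref{lemma-partial-negative} gives $Z_{I_i^{-1}}(n)=0$ for every $i$, whether or not the class of $I_i$ is trivial. Therefore $\zeta_A(s_n)=0$.

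The one point needing care is that the value $Z_{I}(n)$ in Lemma \ref{lemma-partial-negative} coincides with the value at $s_n$ of the continuous extension from Lemma \ref{analytic-extension-goss}. In that lemma, $Z_I(n)$ is defined as the sum over degrees $\sum_d S_{I,d}(-n)$, which stabilizes. I would argue that the continuous extension is itself obtained as the limit, uniform on compact subsets (as in Goss' proof, via the vanishing of the $S_{I,d}$ for large $d$), of the partial sums $\sum_{d\leq D}S_{I,d}(s)$. Evaluated at $s_n$, these partial sums are eventually constant, so the two values agree. Once this compatibility is granted, the corollary follows immediately.
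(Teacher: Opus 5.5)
Your proposal is correct and follows the paper's own argument, which deduces the corollary in one line from the decomposition \eqref{dec-total-partial} together with Lemma \ref{lemma-partial-negative}. Your check that the continuous extension at $s_n$ agrees with the stabilized degree-ordered sum is a detail the paper leaves implicit, and your handling of it is sound.
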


Recall that $Z_I,\zeta_A$ also depend on the choice of a uniformizer of $\pi$, but the above vanishing property does not.
Analogously, Riemann's zeta function, seen as a meromorphic function over $\CC$, has zeroes at even integers $n<0$, so called 
{\em trivial zeroes}. The fact that Riemann's $\xi(s)$ is entire without real zeroes follows from the interaction of 
the trivial zeroes with the poles of $\Gamma$. Goss raised the question of completing $\zeta_A$ multiplying it by some ``gamma factor'' 
to obtain 
an analogue of the function $\xi$ over $\mathbb{S}_\infty$. There are several functions that 
have something to share with Euler's gamma function, reviewed in Goss' book \cite{GOS}. However, the raised question remains at the moment unanswered. For instance, we do not know how to answer the following

\begin{Question}
Given $(X,\infty,\sgn)$, characterize the set $\{n\in\ZZ_{\leq0}:\zeta_A(n)=0\}$.
\end{Question}

Goss proves \cite{GOS0} that in the case $A=\FF_q[\theta]$,  
$$\{n\in\ZZ_{<0}:\zeta_A(n)=0\}=-(q-1)\ZZ_{>0}.$$ A more general result is discussed in Thakur's \cite[Theorem 5.3.2]{THA0}.
However, it does not answer the above question completely.

\subsubsection{Positive values}\label{positive-values} Given a triple $(X,\infty,\sgn)$, with $A$ the ring of functions over $X$ that are regular away from $\infty$, the values of Goss zeta function
\begin{equation}\label{definition-goss-zeta-values}
\zeta_A(n)=\sum_{\begin{smallmatrix} I \text{ ideal of }A\end{smallmatrix}}I^{-s_n}=\prod_{P}\Big(1-\frac{1}{P^{s_n}}\Big)^{-1}\in 1+\mathfrak{m}_{K_\infty}\subset K_\infty^\times,\quad n\geq 1
\end{equation}
(where $s_n$ is defined in \eqref{definition-sn} and the product runs over the maximal ideals $P$ of $A$) are recognized as the analogues of the values of Riemann's zeta function at the positive integers $\geq 2$.
Goss pioneered the study of these values with $n>0$, $q-1\mid n$ and found partial analogues of Euler's formulas \eqref{euler-formulas} (that he called {\em functional equations at even $n$}), see \cite[Theorem 8.19.4]{GOS}. There is quite a long story besides the next result:

\begin{Theorem}[Goss]\label{Goss-formula-Euler}
There exists an element $\widetilde{\pi}\in\CC_\infty^\times$, defined up to multiplication by an element of $\FF_q^\times$ and depending on the choice $(X,\infty,\sgn)$, such that
if $q-1\mid n$ and $n>0$, then
$$\zeta_A(n)=\alpha_n\widetilde{\pi}^n,$$
where $\alpha_n$ is an element of the compositum $VH$ in $\CC_\infty$ of $V$ the field described in Lemma \ref{lemma-ext-ideals} and 
$H$ the Hilbert class field of $K$.
\end{Theorem}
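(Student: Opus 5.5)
The plan follows Goss' strategy: reduce $\zeta_A(n)$ to partial zeta values through the decomposition \eqref{dec-total-partial}, and express each partial zeta value through a rank one Drinfeld $A$-module (a Hayes module) attached to the corresponding ideal class.

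Fix $n>0$ with $q-1\mid n$ and evaluate \eqref{dec-total-partial} at $s_n$:
$$\zeta_A(n)=\sum_{i=1}^h I_i^{-s_n}Z_{I_i^{-1}}(n).$$
By Lemma \ref{lemma-ext-ideals}, the factors $I_i^{-s_n}$ lie in $V$. It therefore suffices to prove $Z_{J}(n)\in VH\cdot\widetilde{\pi}^n$ for every nonzero ideal $J$. Since $q-1\mid n$, the sum over $J^+$ can be replaced by a sum over all nonzero elements: every nonzero $a\in J$ is $\lambda a^+$ with $\lambda\in\FF_q^\times$ and $a^+\in J^+$, and $\lambda^n=1$. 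Hence
$$Z_J(n)=\frac{-1}{q-1}\cdot\Big(-\sum_{a\in J\setminus\{0\}}a^{-n}\Big)$$
up to the harmless factor $\frac{1}{q-1}=-1$ in characteristic $p$. This is an Eisenstein series $E_n(J)=\sum_{0\neq a\in J}a^{-n}$ of the rank one lattice $J\subset\CC_\infty$.

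Next I would use the theory of Drinfeld modules. For the lattice $\Lambda=\xi J$, with $\xi\in\CC_\infty^\times$ chosen so that the associated rank one Drinfeld module $\phi^{\Lambda}$ is sign-normalized (a Hayes module), its coefficients lie in a finite extension of $K$ contained in the narrow Hilbert class field $H^+$. The key input I would invoke is Hayes' theory: the normalizing constant $\xi$ can be taken of the form $\widetilde{\pi}\cdot\eta_J$, where $\widetilde{\pi}$ is the period attached to $A$ itself (unique up to $\FF_q^\times$) and $\eta_J$ is algebraic. Indeed, the Drinfeld module for $J$ is the twist $J*\phi$, and the ratio of periods is governed by the isogeny $\phi_J$. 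The exponential $e_\Lambda(z)=z\prod_{0\neq\lambda\in\Lambda}(1-z/\lambda)$ has logarithmic derivative
$$\frac{1}{e_\Lambda(z)}=\frac1z-\sum_{k\geq1}E_k(\Lambda)z^{k-1},$$
using the fact that $e_\Lambda'=1$. The coefficients of $e_\Lambda$ are algebraic over $K$ and lie in $H^+$, and $E_n(\Lambda)$ is a polynomial in them. Hence $E_n(\Lambda)\in H^+$, and
$$Z_J(n)=-E_n(J)=-\xi^{n}E_n(\Lambda)\in\widetilde{\pi}^n\,\eta_J^n H^+.$$

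The main obstacle is pinning down the field, namely showing that $\eta_J^nE_n(\Lambda)$ and the ideal powers together land in $VH$ rather than only in some larger algebraic extension. Here I would use that $q-1\mid n$ kills the sign ambiguity, so that only $(q-1)$-st powers of Hayes data occur. The effect of passing from $H^+$ to $H$ is exactly the $\FF_q^\times$ ambiguity, and $\mathrm{Gal}(H^+/H)$ acts through $\FF_q^\times$, which $n$-th powers annihilate. The factor $\eta_J^n$ should be compared with $I^{s_n}$ via the relation $\phi_J$ versus the generator $a_J$ of $J^h$, which is how $V$ enters. I expect this bookkeeping, together with the invariance under $\mathrm{Gal}(\CC_\infty/VH)$, to be the delicate step. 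It requires checking Galois equivariance of the Hayes action on the sum over all classes.
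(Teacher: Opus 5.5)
Your reduction matches the paper's: evaluate \eqref{dec-total-partial} at $s_n$, put the factors $I_i^{-s_n}$ in $V$, and reduce to Goss' statement \eqref{partial-result-of-goss} that $Z_J(n)\in H\widetilde{\pi}^n$. The paper only quotes that statement; you try to prove it. Your preliminary steps are correct: $Z_J(n)=-E_n(J)$ for $q-1\mid n$, the expansion of $1/e_\Lambda$, and $E_n(\Lambda)\in H$ when $\Lambda$ is the lattice of a sign-normalized module.

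\textbf{The gap.} It sits exactly at the step you flag as the main obstacle. You only assert that $\eta_J$ is algebraic. Your plan for placing $\eta_J^n$ in $VH$ (comparing $\phi_J$ with a generator $a_J$ of $J^h$, then a Galois-invariance argument) is not an argument, and it points the wrong way. $V$ plays no role in the partial values; it enters only through the coefficients $I_i^{-s_n}$. As written you only obtain $Z_J(n)\in \eta_J^n\widetilde{\pi}^nH$ with $\eta_J$ in an unspecified algebraic extension of $K$, which does not give the theorem.

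\textbf{The missing idea.} Make the isogeny you mention explicit. Let $\phi$ be the sign-normalized module with lattice $\widetilde{\pi}A$, and let $J$ be an integral ideal.
\begin{itemize}
\item Since $\ker\phi_J=\bigcap_{a\in J}\ker\phi_a$, the function $\phi_J(e_\phi(z))$ has kernel $\widetilde{\pi}J^{-1}$ and linear coefficient $\partial\phi_J$.
\item The relation $(J*\phi)_a\phi_J=\phi_J\phi_a$ gives $\phi_J(e_\phi(z))=e_{J*\phi}(\partial\phi_J\,z)$. Hence the lattice of the sign-normalized module $J*\phi$ is $\partial\phi_J\,\widetilde{\pi}\,J^{-1}$.
\item Because $\phi$ has coefficients in $B$, we have $\phi_J\in H[\tau]$, so $\partial\phi_J\in H^\times$, and $J*\phi$ is again defined over $H$.
\item Write any ideal as $I=cJ^{-1}$ with $c\in K^\times$ and $J$ integral. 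Then $E_n(I)=c^{-n}(\partial\phi_J\widetilde{\pi})^n E_n(\partial\phi_J\widetilde{\pi}J^{-1})\in H\widetilde{\pi}^n$.
\end{itemize}
So your $\eta$ lies in $H^\times$ outright. This is \eqref{partial-result-of-goss}, and $\alpha_n\in VH$ then follows immediately from \eqref{dec-total-partial}.

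\textbf{A smaller inaccuracy.} Since $\infty$ has degree one, its residue field is $\FF_q$. So the narrow class group equals the class group and $H^+=H$; the paper's sign-normalized modules indeed take values in $B[\tau]$. There is no $H^+/H$ descent to perform. In general $\operatorname{Gal}(H^+/H)$ is the unit group of the residue field at $\infty$ modulo $\FF_q^\times$, not a group acting through $\FF_q^\times$.
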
 

Without giving full details, we mention that one of the key points in Goss' proof is that the choice $(X,\infty,\sgn)$ corresponds to a choice of a {\em sign-normalized Drinfeld module of rank one} $\phi$. This is, just as Carlitz's functor, a functor from $A$-algebras to $A$-modules. In the special case of the usual triple $(\PP^1,\infty,\sgn)$ in \S \ref{in-genus-0}, it reduces to Carlitz's module $C$. Then, $\phi(\CC_\infty)$ carries a structure of rigid analytic curve and it is analytically isomorphic to
$\CC_\infty/\widetilde{\pi} A$. In full generality, it is hard to determine explicit formulas for $\widetilde{\pi}$. But for certain choices of $(X,\infty,\sgn)$ it is nevertheless possible. This can notably be made in the case of $X$ an elliptic curve over $\FF_q$ determined by 
a Weierstrass model, with its point at infinity, see \cite{GRE&PAP} for this and other fascinating explicit formulas. 
The presence of the compositum $VH$ in the statement is due to the process of ideal exponentiation. Goss proves that for every invertible ideal $I$ of $A$, if $q-1\mid n$ and $n>0$, \begin{equation}\label{partial-result-of-goss}Z_I(n)\in H\widetilde{\pi}^n.
\end{equation}
Theorem \ref{Goss-formula-Euler} follows directly from this, jointly with \eqref{dec-total-partial}.

As a last comment on Goss theory, we mention that the structures described by Corollary \ref{corollary-trivial zeroes} and Theorem \ref{Goss-formula-Euler} seem to point toward ``something like functional equations''. However, at the moment we are far from obtaining formulas of the type (\ref{functional-equation-xi}). It seems that even finding hypothetical candidates for gamma factors, defined as inverses of continuous functions over $\mathbb{S}_\infty$, is an unsolved problem. 

%\begin{Remark}\label{remark-on-d-infty}{\em  HERE REMARK ON $d_\infty$}\end{Remark}

\section{Zeta functions over curves}\label{Zeta-functions-curves}

In this second part of our paper we discuss recent investigations on alternative ways of interpolating Goss' zeta values \eqref{definition-goss-zeta-values}. We will be mainly interested in interpolating special values of Goss' partial zeta functions
as in (\ref{goss-partial-zeta}) for example. One of the properties that we study here is that these values can be interpolated by rigid analytic functions over the curve $X$, with the scalars extended to $\CC_\infty$ (with essential singularities). The picture that emerges escapes, at least at first sight, from Goss' viewpoint and from Tate's game of quasi-characters but in fact turns out to be connected with it in a subtle way. It looks like the premises of a theory parallel to that of Dedekind zeta functions appear in this direction. Some examples of such functions are discussed in \S \ref{Dedekind} The experimental data observed by Ferraro in the appendix to the paper point toward the presence of many of the principal milestones of the classical theory.

We are going to present
results of Ferraro (see Theorem \ref{Ferraro-theorem}, and the more general results in \cite{FER1,FER2}) which can be interpreted as a {\em functional identity} for a variant of
Goss partial zeta functions discussed in \S \ref{Goss-partial-zeta}. In \S \ref{Drinfeld-shtuka-divisors} we present the tools used and we give an essentially self-contained sketch of Proof of Ferraro's theorem \ref{Ferraro-theorem} following his paper \cite{FER1}.

\subsection{Functions over curves}\label{partial-zeta-functions}

 We consider a triple $(X,\infty,\sgn)$ as in the previous sections. We continue to assume that $\infty$ is an $\FF_q$-rational point of $X$. 
 
 If $B$ is any ring with unit, there exists a unique ring homomorphism $\ZZ\rightarrow B$. 
 But if $B$ is a ring containing $\FF_q$, there can be several $\FF_q$-algebra homomorphisms $A\rightarrow B$ that can be encompassed in analytic families; we would like to make them into variables of ``zeta functions''.
For instance, there is a bijective correspondence
$$\{f:A\rightarrow\CC_\infty:\FF_q\text{-algebra homomorphisms }\}\leftrightarrow\{\text{Closed points of } \operatorname{Spec}(\CC_\infty\otimes_{\FF_q}A)\}.$$
Let us write
$$X_{\CC_\infty}:=X\times_{\operatorname{Spec}(\FF_q)}\operatorname{Spec}(\CC_\infty)$$
for the curve $X$ with the scalars extended to $\CC_\infty$.
The point $\infty$ extends to a point of $X_{\CC_\infty}$. We can identify
$$X_{\CC_\infty}\setminus\{\infty\}=\operatorname{Spec}(A\otimes_{\FF_q}\CC_\infty).$$ Points of this affine curve are therefore
in correspondence with $\FF_q$-algebra morphisms
$$\chi:A\rightarrow\CC_\infty.$$
The curve $X_{\CC_\infty}$ also has a canonical point $\Xi$, distinct from $\infty$, determined by the canonical embedding $A\subset\CC_\infty$
($\CC_\infty$ is constructed out of $A$ by successively constructing $K$ its fraction field, $K_\infty$ the completion at the place $\infty$, and then, the completion $\CC_\infty$ of a separable closure). 

There are other elementary tools that we want to use. The $q$-Frobenius
$F$ is the bijective map $X(\CC_\infty)\rightarrow X(\CC_\infty)$ which is the identity on $\{\infty\}$ and is otherwise determined by the map
$\chi\mapsto\chi^{(1)}$ where, with $\chi$ an $\FF_q$-algebra homomorphism, $\chi^{(1)}$ the $\FF_q$-algebra map determined by
$$\chi^{(1)}(a):=\chi(a)^q,\quad a\in A.$$ 
On $A\otimes_{\FF_q}\CC_\infty$ we also have the {\em Anderson twist}. This is the unique $\FF_q$-algebra automorphism
$A\otimes_{\FF_q}\CC_\infty$ determined by $a\otimes c\mapsto a\otimes c^q$. If $h\in A\otimes_{\FF_q}\CC_\infty$ we denote by $h^{(1)}$ its Anderson twist, it uniquely extends to rational functions over $X_{\CC_\infty}$. Similarly, we can define $h^{(n)}$ for all $n\in\ZZ$, because the Anderson twist is bijective. Note that $A\otimes_{\FF_q}\CC_\infty$ is the ring of rational functions over $X_{\CC_\infty}$ that are regular away from $\infty$. The evaluation of such a rational function
at a point $\chi\in X_{\CC_\infty}\setminus\{\infty\}$ can be done once clarified what it means for elementary tensors. If $a\in A$ and $c\in\CC_\infty$, we have 
$$(a\otimes c)(\chi)=c\chi(a)\in\CC_\infty.$$
Then the twist of Anderson is the unique $A\otimes1$-linear map $h\mapsto h^{(1)}$ over $\operatorname{Frac}(A\otimes_{\FF_q}\CC_\infty)$ such that
$$h^{(1)}(\chi^{(1)})=h(\chi)^q,\quad h\text{ regular function},\quad \chi\text{ point.}$$

\subsubsection{Tate algebras}

On the ring $A\otimes_{\FF_q}\CC_\infty$ we have the {\em Gauss valuation} extending $|1\otimes(\cdot)|$
over $1\otimes\CC_\infty$ (to distinguish it from $|\cdot|$ we shall denote it by $\|\cdot\|$). The completion for this valuation
$$\TT_A:=\widehat{A\otimes_{\FF_q}\CC_\infty}$$
is the {\em Tate algebra} associated to $A$. Its maximal spectrum can be identified with the set $\boldsymbol{D}_X$ of points $\chi$ of 
$X(\CC_\infty)\setminus\{\infty\}$ defined by:
\begin{equation}\label{DX}
\boldsymbol{D}_X=\{\chi:\{|(a\otimes1)(\chi)|:a\in A\}\text{ is bounded}\}.
\end{equation}
It can be proved that $\chi\in\boldsymbol{D}_X\setminus X(\FF_q^{\text{alg}})$ if and only if $(a\otimes1)(\chi)\rightarrow0$ as $a$ runs in $A$. Moreover
we can establish, by evaluation, a $\CC_\infty$-algebra isomorphism between $\TT_A$ and the algebra of the series $\sum_ic_ia_i$
with $(a_i)_i$ an arbitrarily chosen basis of the $\FF_q$-vector space $A\otimes1$ and $c_i\rightarrow0$, by noticing that any such series can be 
evaluated at elements of $\boldsymbol{D}_X$.
 This subset $\boldsymbol{D}_X$ of $X_{\CC_\infty}\setminus\{\infty\}$ can be identified with an affinoid of the rigid analytic curve $X^{\text{an}}_{\CC_\infty}$, the analytification of $X_{\CC_\infty}$; it does not contain $\infty$ and $\Xi$.
An element $f$ of $\TT_A$ belongs to $\TT_A^\times$ if and only if, seeing it as a function $\boldsymbol{D}_X\rightarrow\CC_\infty$, $f(\chi)\neq0$ for all $\chi\in \boldsymbol{D}_X$.
For example, in the Carlitz case of \S \ref{in-genus-0}, we can identify, in $A\otimes_{\FF_q}\CC_\infty$, $1\otimes A$ with $\FF_q[\theta]$ and $A\otimes 1$ with $\FF_q[t]$ where $t$ is the unique regular function over the affine line $\mathbb{A}_{\CC_\infty}^1$
such that for all points $\chi$, $t(\chi)=\chi$. In this case $(t^i)_{i\geq0}$ is a basis of $A\otimes1$.

\subsection{Partial zeta functions over curves}

Let $I$ be a non-zero ideal of $A$. We introduce, for $n\in\ZZ$, the formal series:
$$Z_{X,I}(n):=\sum_{d\geq 0}\sum_{\begin{smallmatrix}a\in I^+\\ \deg(a)=d\end{smallmatrix}}a\otimes a^{-n}.$$
If $n\geq 1$, we see that this defines an element of $\TT_A$ (the series clearly converges for the Gauss valuation of $\TT_A$). Therefore $Z_{X,I}(n)$ can be viewed as a function $\boldsymbol{D}_X\rightarrow\CC_\infty$ by setting
$$Z_{X,I}(n)(\chi):=\sum_{a\in I^+}\frac{a(\chi)}{a^n}\in\CC_\infty.$$ An analogue of Lemma \ref{analytic-extension-goss} holds for these functions, a proof in the particular case $n=1$ can be found in \cite[Lemma 1.1]{CHU&NGO&PEL}, but the general case can be tackled with the same methods, that fundamentally draw us back to \cite[Lemma 8.8.1]{GOS}, therefore we omit the details of the proof. Note that results of this kind already appeared in \cite{ANG&PEL}.

\begin{Lemma}\label{partial-zeta-are-entire}
For all $I$ ideal of $A$ and $n\geq 1$, the function $Z_{X,I}(n)$ extends to a rigid analytic function over $X_{\CC_\infty}\setminus\{\infty\}$ with an essential singularity at $\infty$.
\end{Lemma}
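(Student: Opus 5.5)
We group the series by degree, exactly as in the definition:
$$Z_{X,I}(n)=\sum_{d\geq0}S_d,\qquad S_d:=\sum_{a\in I^+(d)}a\otimes a^{-n}\in A\otimes_{\FF_q}\CC_\infty .$$
Each $S_d$ is a rational function on $X_{\CC_\infty}$, regular away from $\infty$. It therefore suffices to show that $\sum_d S_d$ converges uniformly on every affinoid subdomain of $X^{\text{an}}_{\CC_\infty}\setminus\{\infty\}$. An exhaustion of $X_{\CC_\infty}\setminus\{\infty\}$ is given by the affinoids
$$U_r=\{\chi:|(b\otimes1)(\chi)|\leq r^{\deg b}\ \text{for all } b\in A\},\qquad r\geq1 .$$
Equivalently, fixing finitely many generators of $A$, we bound their values at $\chi$. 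The goal is an estimate of the form $\sup_{U_r}|S_d|\leq C_r\,\varepsilon_r^{\,d}$ with $\varepsilon_r<1$ for every $r$.

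As in the proof of Lemma \ref{lemma-partial-negative}, either $I^+(d)=\emptyset$ or $I^+(d)=a_0+I(<d)$, where $V_d:=I(<d)$ is an $\FF_q$-vector space of dimension $\ell_d=d-g+O(1)$ by Riemann--Roch. Choose a basis $b_1,\dots,b_{\ell}$ of $V_d$ made of elements of distinct degrees. Then
$$S_d=\sum_{c\in\FF_q^{\ell}}\Big(a_0\otimes1+\sum_i c_i\, b_i\otimes1\Big)\Big(1\otimes a_0+\sum_i c_i\,(1\otimes b_i)\Big)^{-n}.$$
Expand $(1\otimes a)^{-n}=a_0^{-n}\bigl(1+\sum_i c_i b_i/a_0\bigr)^{-n}$ as a power series in the small quantities $b_i/a_0$ (here $|b_i/a_0|<1$). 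The result is a sum over multi-indices $(k_i)$ of terms
$$\binom{-n}{k}\,a_0^{-n}\prod_i (b_i/a_0)^{k_i}\prod_i c_i^{k_i},$$
multiplied by the linear factor in $c$. Summing over $c\in\FF_q^\ell$ kills every monomial unless each $c_i$ appears with exponent $\geq1$ and divisible by $q-1$; this is the mechanism of \cite[Lemma 8.8.1]{GOS}. Hence every surviving term contains, for each $i$ (with possibly one exception coming from the linear factor $\sum c_i b_i\otimes 1$), at least $q-1$ factors of $b_i/a_0$. Its absolute value on $U_r$ is therefore at most roughly $r^{d}\cdot|a_0|^{-n}\prod_{i}|b_i/a_0|^{q-1}$.

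Now use $\deg b_i$ running over the non-gaps below $d$, so that $\prod_i|b_i/a_0|\approx c^{-\sum_i(d-\deg b_i)}\approx c^{-d^2/2+O(d)}$. The surviving terms are then bounded on $U_r$ by $r^{O(d)}c^{-(q-1)d^2/2+O(d)}$. The factor $r^{d}$ coming from $a\otimes1$ grows only geometrically, while the gain is quadratic in $d$, so $\sup_{U_r}|S_d|\to0$ for every $r$. This gives convergence on all of $X_{\CC_\infty}\setminus\{\infty\}$, and hence rigid analyticity there as a uniform limit of regular functions on each affinoid. Note that the binomial coefficients lie in $\FF_p$ and are harmless.

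The main obstacle is making this cancellation estimate rigorous: one must control the infinitely many terms $k$ in the expansion uniformly, including the extra linear factor coming from $a\otimes1$, which spoils one divisibility condition. The cleanest route is to first handle $n\leq0$-type polynomial sums, where the sum over $V_d$ of $x^{m}$ vanishes when $m<(q-1)\ell_d$. Then the series in $k$ is truncated: only total degrees $\geq(q-1)\ell_d-1$ survive, and this yields the bound via the ultrametric inequality.

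Finally, for the essential singularity at $\infty$, note that $Z_{X,I}(n)$ is not a rational function. Infinitely many $S_d$ are nonzero and, having poles of growing order at $\infty$ with leading contributions that do not cancel, prevent meromorphic continuation at $\infty$. Alternatively, restrict to a punctured neighbourhood of $\infty$ and observe that the Laurent expansion in a local parameter has infinitely many negative-order terms.
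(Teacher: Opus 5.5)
Your coordinatewise argument is correct, and it takes a genuinely different route from the paper's. The paper defers to \cite[Lemma 1.1]{CHU&NGO&PEL} and \cite[Lemma 8.8.1]{GOS}; the method is written out in the proof of Proposition \ref{rho-r-E}.

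\emph{The paper's route.} One writes $a^{-n}=\pi^{nd}\langle a\rangle^{-n}$ and replaces $\langle a\rangle^{-n}$ by $\langle a\rangle^{q^N-n}$. The difference is a multiple of $(1-\langle a\rangle)^{q^N}$, hence of size $|\pi|^{q^N}$. The main term is, up to a power of $\pi$, $\sum_{a\in I^+(d)}(a\otimes1)(1\otimes a)^{q^N-n}$. This is a sum over an affine $\FF_q$-space of a product of at most $1+N(q-1)$ linear forms, so it vanishes by Goss' total-degree criterion once $(q-1)\ell_d>1+N(q-1)$. Taking $N$ of order $\ell_d$ gives $\|S_d\|_\rho\leq\rho^d|\pi|^{nd+q^N}$, a doubly exponential decay.

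\emph{Your route.} You bypass $\langle\cdot\rangle$ and linear forms altogether. You expand directly, use $\sum_{c\in\FF_q}c^k=0$ unless $k\geq1$ and $(q-1)\mid k$, coordinate by coordinate, and exploit that an echelon basis has distinct degrees. This gives $\prod_i|b_i/a_0|^{q-1}\leq c^{-(q-1)\ell_d(\ell_d+1)/2}$. That is far weaker than the paper's bound, but it already beats $\max(r,c)^d$.

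The bound is rigorous as it stands, so the ``obstacle'' you name is not one. The multivariable expansion converges because every $|b_i/a_0|\leq c^{-1}$, and the sum over $c$ is finite. The ultrametric inequality then transfers your termwise bound to $\sup_{U_r}|S_d|$. The linear factor, including the exceptional index, costs at most a factor $\max(r,c)^d$.

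Your route is more elementary. The paper's gives the sharp rate, and it is the one the paper reuses, with vector degrees $\partial$ and several embeddings, for Theorem \ref{lemma-relative-partial-is-entire}.

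Two caveats.

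First, the ``cleanest route'' you suggest at the end would in fact fail. Truncation by total degree $\geq(q-1)\ell_d-1$ only bounds the surviving terms by $c^{-(q-1)\ell_d+1}$, because the largest ratio $|b_i/a_0|$ equals $c^{-1}$. This geometric decay loses against $r^d$ as soon as $r>c^{q-1}$, so it yields analyticity on a bounded region only. That is precisely why Goss first inflates the number of linear forms via $\langle a\rangle^{q^N-n}$. In your approach it is the per-coordinate condition that does the work, so keep it.

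Second, your essential-singularity paragraph is only heuristic. Nothing shows that the ``leading contributions'' survive. The coefficient of a fixed basis element of degree $d$ receives contributions from every $S_{d'}$ with $d'\geq d$, and $\sum_{a\in I^+(d)}a^{-n}\neq0$ is not automatic. The right reduction is this: meromorphy at $\infty$ would, by rigid GAGA, force $Z_{X,I}(n)\in A\otimes_{\FF_q}\CC_\infty$, and that must be excluded. For $n\equiv1\pmod{q-1}$ it follows because the values at the distinct points $\Xi^{(k)}$ with $q^k>n$ are $Z_I(n-q^k)=0$ by Lemma \ref{lemma-partial-negative}. These are infinitely many zeros of the nonzero function $Z_{X,I}(n)$, accumulating at $\infty$, which a nonzero rational function cannot have. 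The paper gives no argument for this part either.
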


We will more simply say that $Z_{X,I}(n)$ extends to an {\em entire function}.
Variants of these functions where the variable is in $X_{\CC_\infty}^s$ with $s\geq 1$ can be considered, see \cite{ANG&PEL} in the Carlitz case and several papers authored by Angl\`es, Ngo Dac and Tavares Ribeiro among which we mention \cite{ANG&NGO&TAV}, containing many references. Multiple zeta variants are also considered in \cite{CHU&NGO&PEL}.

\subsubsection{Link with quasi-characters}

We would like to connect this construction with the discussion of \S \ref{Goss-theory-zeta-functions}. 
We see here that our functions are not simple re-parametrizations of Goss' functions, although they interact with them.
It is possible indeed to establish a correspondence between points $\chi\in\boldsymbol{D}_X\setminus X(\FF_q^{\text{alg}})$ and $\CC_\infty$-valued quasi-characters via the next Lemma.

\begin{Lemma}
Given $\chi\in\boldsymbol{D}_X\setminus X(\FF_q^{\text{alg}})$ we can find $c_\chi\in\operatorname{Hom}_c(K_\infty^\times,\CC_\infty^\times)$ extending the multiplicative map $A\rightarrow\CC_\infty$ defined by the evaluation at $\chi$.
\end{Lemma}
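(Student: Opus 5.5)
The map $a\mapsto a(\chi)=\chi(a)$ is multiplicative on $A$. The task is to show that its restriction to $A\setminus\{0\}$ extends to a continuous homomorphism on $K_\infty^\times$. I will use the decomposition \eqref{k-infty-decomposition} $K_\infty^\times=\FF_q^\times\cdot\pi^\ZZ\cdot(1+\mathfrak{m}_{K_\infty})$. The plan is to define $c_\chi$ first on $A\setminus\{0\}$, then on $K^\times$ by $c_\chi(a/b)=\chi(a)/\chi(b)$, and finally extend by continuity to $K_\infty^\times$, using that $K^\times$ is dense in $K_\infty^\times$.

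First, $\chi(a)\neq0$ for $a\neq 0$. Indeed, $\chi$ is an $\FF_q$-algebra map $A\to\CC_\infty$ whose kernel is a prime ideal. A nonzero kernel would be a maximal ideal $P$ with $A/P$ finite, so the image of $\chi$ would lie in $\FF_q^{\text{alg}}$ and $\chi\in X(\FF_q^{\text{alg}})$, which is excluded. Hence $\chi$ is injective and extends to a field embedding $K\to\CC_\infty$, so $c_\chi$ is a well-defined homomorphism on $K^\times$.

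The main step is continuity for the $\infty$-adic topology on $K^\times$. It suffices to prove it at $1$: for $\alpha\in K^\times$ with $|\alpha-1|$ small, $|c_\chi(\alpha)-1|$ should be small. Write $\alpha=a/b$ with $a,b\in A\setminus\{0\}$, so $\alpha-1=(a-b)/b$ and $c_\chi(\alpha)-1=\chi(a-b)/\chi(b)$. I will use the characterization recalled before \eqref{DX}: $\chi(a)\to0$ as $a$ runs through $A$, i.e.\ as $\deg a\to\infty$, since each $\{\deg a\le d\}$ is finite-dimensional over $\FF_q$. This controls the numerator, since $\deg(a-b)$ is large. The difficulty is the denominator $\chi(b)$, which also tends to $0$. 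So I need a quantitative comparison, something like $|\chi(a)|\approx\rho^{\deg a}$ for some $0<\rho<1$, at least up to bounded error, on monic elements.

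I expect this comparison to be the main obstacle, and I would handle it by producing the quasi-character directly rather than by extending by continuity. Take $x:=\chi(\cdot)$ evaluated at a suitable element, and define $c_\chi(\lambda\langle\alpha\rangle\pi^n)=\lambda\,\tilde c(\langle\alpha\rangle)\,x^n$, as in the proposition on quasi-characters. For this, choose $f\in A^+$ of some degree $h>0$, and use the $\QQ$-line structure of Proposition \ref{proposition-uvw} to set $x:=\chi(f)^{-1/h}$, a choice of root (the uniquely divisible group $v\cap w$ makes this coherent). Then define $\tilde c$ on $1+\mathfrak{m}_{K_\infty}$ via elements of the form $\langle a\rangle=a\pi^{\deg a}$, and show it is continuous. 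The required estimate is that $\chi(a)/\chi(b)$ is close to $1$ after normalizing by $x^{\deg a-\deg b}$ whenever $a/b\in1+\mathfrak{m}^N$ and $\deg a=\deg b$. To get it, I would use the decomposition of $A$ as $\FF_q$-space by degree and the fact that $|\chi(a)|$ equals the maximum of $|c_i||\chi(a_i)|$ over a basis adapted to degree. This ultrametric "leading term dominates" property is what I would try to prove from $\chi\in\boldsymbol D_X$ and $\chi\notin X(\FF_q^{\text{alg}})$. If it fails in general, the fallback is to accept a weaker statement: extend only the restriction to monic elements, and allow $c_\chi$ to be defined by a character on $\FF_q^\times$ twisted appropriately.
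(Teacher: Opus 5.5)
The step you single out as the main obstacle cannot be overcome. Under the stated hypothesis no such $c_\chi$ exists, so the lemma as written is false.

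Take a non-constant $f\in A^+$. Since $\chi\in\boldsymbol{D}_X$, the values $|\chi(f)|^n=|\chi(f^n)|$ are bounded, so $0<|\chi(f)|\le 1$; the value is non-zero by the injectivity you correctly established. The elements $\alpha_n:=(f^n+1)/f^n=1+f^{-n}$ lie in $1+\mathfrak{m}_{K_\infty}$ and tend to $1$. Let $c$ be any homomorphism agreeing with $\chi$ on $A\setminus\{0\}$, or merely on $A^+$, since $f^n$ and $f^n+1$ are both monic. Then $c(\alpha_n)=1+\chi(f)^{-n}$, hence $|c(\alpha_n)-1|=|\chi(f)|^{-n}\geq 1$ for every $n$. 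So continuity fails for every candidate, including your $\lambda\,\tilde c(\langle\alpha\rangle)\,x^n$ and your fallback.

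The same example refutes the two facts your plan leans on:
\begin{itemize}
\item The characterization you quote, $\chi(a)\to0$ as $\deg a\to\infty$, is false: $|\chi(f^n+1)|=1$ if $|\chi(f)|<1$, and $|\chi(f^n)|=1$ if $|\chi(f)|=1$.
\item The \emph{leading term dominates} estimate $|\chi(a)|\approx\rho^{\deg a}$ is exactly what continuity needs, but with $\rho>1$. On $\boldsymbol{D}_X$ the behaviour is the opposite. For $A=\FF_q[\theta]$ and $\chi\colon\theta\mapsto t$ with $0<|t|<1$, one has $|\chi(a)|=|t|^m$, where $m$ is the order of vanishing of $a$ at $\theta=0$, so low-order terms dominate.
\end{itemize}

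This is not a defect of your route alone: the paper's proof has the same problem. It picks a uniformizer $u=a_1/a_0\in K$ at $\infty$ with $x:=u(\chi)$, $0<|x|<1$, and sets $c_\chi(\sum_iy_iu^i):=\sum_iy_ix^i$. This is a continuous embedding $K_\infty\to\CC_\infty$. However, for $a\in A$ the leading term gives $|c_\chi(a)|=|x|^{-\deg a}$, which is unbounded. So on $A$ it is evaluation at the point of the residue disc of $\infty$ with $u$-coordinate $x$, not at $\chi$. Small values of $u$ occur near every zero of $u$, not only near $\infty$.

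Concretely, take $A=\FF_q[\theta]$, $u=\theta/(\theta^2+1)$ and $\chi\colon\theta\mapsto 1/\theta$, which lies in $\boldsymbol{D}_X\setminus X(\FF_q^{\text{alg}})$. Then $x=u(\Xi)$, and $c_\chi$ is the tautological inclusion $K_\infty\subset\CC_\infty$, i.e.\ evaluation at $\Xi$. Thus $c_\chi(\theta)=\theta\neq\chi(\theta)=1/\theta$.

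The statement becomes correct, with precisely the paper's proof, if $\boldsymbol{D}_X$ is replaced by its complement in $X(\CC_\infty)\setminus\{\infty\}$. That is, take $\chi\neq\infty$ reducing to $\infty$, as $\Xi$ does. There $u$ is a coordinate on the residue disc and $\chi(y)=\sum_iy_iu(\chi)^i$ for $y\in K$. Your own density-and-continuity argument also works in that region: $|\chi(a)|=|u(\chi)|^{-\deg a}$ gives $|\chi(b)/\chi(a)|=|u(\chi)|^{\deg a-\deg b}\to0$.
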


\begin{proof}
First note that the $\FF_q$-algebra map $A\rightarrow\CC_\infty$ defined by $\chi$ is injective because $\chi\not\in X(\FF_q^{\text{alg}})$ by hypothesis. Hence it extends to a field embedding $K\rightarrow\CC_\infty$.
For $n$ large the line bundle defined by $A(\leq n)\otimes_{\FF_q}\CC_\infty$ is very ample and determines 
a closed immersion $i$ of $X_{\CC_\infty}$ in $\PP^N$ for some $N$. Say that $i=(a_0:a_1:\cdots:a_N)$ with $\deg(a_0)=\deg(a_1)+1$ (this is possible with $n$ large) with $i(\infty)=(1:0:\cdots:0)$. We can make 
additional choices so that $0<|a_1(\chi)|<|a_0(\chi)|$. On one side $u:=\frac{a_1}{a_0}$ uniformizes $K$ at infinity
so that every element $y$ of $K_\infty$ can be expanded as a Laurent series $y=\sum_iy_iu^i\in\FF_q((u))$, on the other side
the series $y(\chi):=\sum_iy_ix^i$ with $x:=\frac{a_1(\chi)}{a_0(\chi)}\in\CC_\infty^\times$ such that $0<|x|<1$
converges in $\CC_\infty$ (to an element of its valuation ring) and this map is the quasi-character $c_\chi$ that we wanted.
\end{proof}

Conversely, for any {\em evaluative quasi-character} $c:K_\infty^\times\rightarrow\CC_\infty^\times$ defined by sending a chosen uniformizer $\pi$ to $x\in\CC_\infty$ with $0<|x|<1$, we can find a point $\chi$ and 
a uniformizer of $K_\infty$ in $K$ such that $c$ can be extended to $\operatorname{Hom}_c(K_\infty^\times,\CC_\infty^\times)$ and arises from the $\FF_q$-algebra map $\chi$ following the above construction. 

The entailed subspace of $\operatorname{Hom}_c(K_\infty^\times,\CC_\infty^\times)$ does not equal any of the spaces $\mathbb{S}_\infty$ of Goss. 
To explain the relationship with Goss' $Z_I$ we use a generalization of Goss' zeta functions introduced by Angl\`es
and discussed in \cite[Example 5.12]{GOS3} (it is only considered in the Carlitz case of \S \ref{in-genus-0} but it easily generalizes to our settings). Let us choose a uniformizer $\pi$ for $K_\infty$ and, for $n\geq 0$ let us introduce the set
$$\mathbb{S}_{n,\infty}^\sharp:=\CC_\infty^\times\times\mathbb{X}_\infty^n,$$
where $$\mathbb{X}_\infty=\mathfrak{m}_{\CC_\infty}\times\ZZ_p.$$ The case $n=1$ will be sufficient for our purposes.
If $s=(z,(x_1,y_1),\ldots,(x_n,y_n))\in\mathbb{S}_{n,\infty}^\sharp$ we set, with $a\in A^+$,
\begin{equation}\label{exponentiation-bis}
a^s:=z^{\deg(a)}\langle a\rangle_{x_1}^{y_1}\cdots\langle a\rangle_{x_n}^{y_n},
\end{equation}
where for $x\in\mathfrak{m}_{\CC_\infty}$, $\langle a\rangle_{x}$ is the result in $1+\mathfrak{m}_{\CC_\infty}$ of evaluating the formal series $\langle a\rangle$ replacing $\pi$ with $x$ (this is well defined, it is in the image of an $\FF_q$-algebra map).
The sets $\mathbb{S}_{n,\infty}^\sharp$ have no simple group structures but the images under the projections that forget all entries in $\mathfrak{m}_{\CC_\infty}$ are groups. If $s=(z,(x,y))$
we set $-s:=(z^{-1},(x,-y))$.
Note that we can identify $\mathbb{S}_{\infty}$ with the subset $\CC_\infty^\times\times\{\pi\}\times\ZZ_p$ of 
$\mathbb{S}_{1,\infty}^\sharp$. Comparing with 
\eqref{Goss-exponentiation} we see that $a^{(z,y)}=a^{(z,(\pi,y))}$ (the former is Goss' exponentiation while the latter is the one we defined in \eqref{exponentiation-bis} with exponents in
$\mathbb{S}_{1,\infty}^\sharp$). We see that, with $\chi$ such that $$\frac{a_1(\chi)}{a_0(\chi)}=x\in\mathfrak{m}_{\CC_\infty}$$
and $\frac{a_1}{a_0}\in K$ that can be identified with a uniformizer of $K_\infty$,
$$Z_{X,A}(\chi)=\sum_{d\geq 0}\sum_{\begin{smallmatrix}a\in A^+\end{smallmatrix}}a^{-s}$$
where $s=(\frac{x}{\pi},x,1)\in\mathbb{S}_{1,\infty}^\sharp$. Hence some special values of one of our functions, $Z_{X,A}$,
are also special values of these functions closely related to Goss' theory, that have been first considered by Angl\`es. Choosing $x_1=\cdots=x_n=\pi$ and ``moving'' the parameters $z,y_1,\ldots,y_n$ is an operation typical of the realm of Goss' functions and in \cite[Proposition 5.13]{GOS3} it is proved that there indeed is analytic continuation in the sense of Goss, while for us it is typical to leave $y_1,\ldots, y_n$ fixed, and ``move'' $z,x_1,\ldots,x_n$ (in fact $\chi$). Lemma \ref{partial-zeta-are-entire} 
stipulates that such functions too, allow analytic continuation, but in the rigid analytic sense (see also \cite[Lemma 5.17]{GOS3}). With little additional work we can verify that they satisfy the same phenomenology for negative values, trivial zeroes, etc. that we described in \S \ref{negative-values} and \ref{positive-values}.

\subsubsection{Review of Ferraro's Theorem}
We now review a functional identity discovered by Ferraro in \cite{FER1,FER2} (see Theorem \ref{Ferraro-theorem}) which  serves to deduce partial analogues of the functional equation of Riemann's $\zeta$, and also an analogue of Riemann's $\xi$. A precursor of this result has been first obtained by Green and Papanikolas in \cite{GRE&PAP} in the 
case $(X,\infty)$ elliptic curve over $\FF_q$. The first prototype, over the projective line, is in \cite{PEL} but methods of analysis are a little bit different and will not be discussed here.

To read the statement of Ferraro's Theorem \ref{Ferraro-theorem} (we give a simplified version of his results) technical preparation is requested, presented in \S \ref{Drinfeld-shtuka-divisors}. In particular, we need to review the theory of shtuka divisors and functions. The results in this section provide us with exactly $h_A$ effective divisors 
$V$ on $X_{\CC_\infty}$ of degree $g$ the genus of $X$ ($h_A$ is the class number of $A$) called shtuka divisors. They satisfy 
$h^0(V)=1$. For any shtuka divisor $V$ there exists, up to a non-zero scalar factor, a unique shtuka function
$f$ rational over $X_{\CC_\infty}$ such that its divisor satisfies
$$\Div(f)=V^{(1)}-V+\Xi-\infty$$  (see Lemma \ref{drinfeld-lemma}). 
Additionally, to each such shtuka divisor $V$ corresponds a unique dual shtuka divisor $V^*$ (see Lemma \ref{corollary-Weil}), effective of degree $g$, with $h^0(V^*)=1$. In fact $V^*$ is the birational inverse of $V$ in the sense of Weil. Explicitly, by Lemma \ref{corollary-Weil}, there exists up to a scalar factor, a unique rational function $\delta$ over $X_{\CC_\infty}$ such that 
$$\Div(\delta)=V+V^*-2g\infty.$$
We will not fully justify it here, but it is also possible to show that $\delta$ can be chosen in $A\otimes H$ and $f$ in 
$\operatorname{Frac}(A\otimes H)$ with $H$ the Hilbert class field of $A$.

Ferraro's theorem also needs the notion of {\em special function} (introduced in \cite{ANG&NGO&TAV1}). This is a class of elements of $\TT_A$ that extend to meromorphic functions on $X_{\CC_\infty}\setminus\{\infty\}$, depending on the choice of a shtuka function $f$, and that we quickly review in \S \ref{Special-functions}, to play the role of the classical ``gamma factor'' in the corresponding functional equation of Riemann's zeta function. Note that Anderson's twist extends uniquely to an $A\otimes1$-automorphism of $\TT_A$.
Once $f$ is chosen $\omega$ is characterized, up to a factor in $A\otimes1$, by 
$$\omega^{(1)}=f\omega.$$
We finally recall the element $\widetilde{\pi}\in\CC_\infty^\times$, analogue of $2\pi i$, involved in Theorem \ref{Goss-formula-Euler}, and also depending on our choice of $(X,\infty,\sgn)$ and defined up to a multiple in $\FF_q^\times$.
A special case of Ferraro's result is the following: 

\begin{Theorem}[Ferraro]\label{Ferraro-theorem}
For any triple $(X,\infty,\sgn)$ there is a unique shtuka function $f$  associated to a shtuka divisor $V$ together with a non-zero special function $\omega$, and a unique choice of $\delta\in A\otimes H$, such that 
\begin{equation}\label{functional-identity-Ferraro}
\omega^{(1)}Z_{X,A}=\widetilde{\pi}\delta^{(1)}.
\end{equation}
\end{Theorem}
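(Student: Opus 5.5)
The plan is to prove \eqref{functional-identity-Ferraro} by showing that both sides are rational functions on $X_{\CC_\infty}$ with the same divisor, and then fixing the scalar by looking at a single point.

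\textbf{Step 1: divisor of $\omega$.} Choose the shtuka divisor $V$ attached to the sign-normalized Drinfeld module $\phi$ with period lattice $\widetilde{\pi}A$ (Theorem \ref{Goss-formula-Euler}). Let $f$ be its shtuka function, normalized so that $\sgn$ of its leading coefficient at $\infty$ is compatible with $\sgn$. The special function $\omega$ with $\omega^{(1)}=f\omega$ extends meromorphically to $X_{\CC_\infty}\setminus\{\infty\}$. Iterating
$$\omega=\frac{\omega^{(k)}}{f f^{(1)}\cdots f^{(k-1)}}$$
and using $\Div(f)=V^{(1)}-V+\Xi-\infty$, we find that $\omega$ has simple poles exactly at $\Xi^{(k)}$ for $k\ge 1$, no other poles, and that its zeros away from $\infty$ are controlled by $V$. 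Consequently $\omega^{(1)}$ has simple poles at $\Xi^{(k)}$ for $k\ge2$. It is not regular at $\Xi^{(1)}$ in general, and its extra zeros lie at $V^{(1)}$.

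\textbf{Step 2: divisor of $Z_{X,A}$.} By Lemma \ref{partial-zeta-are-entire}, $Z_{X,A}=Z_{X,A}(1)$ is entire. I would show that it vanishes at $\Xi^{(k)}$ for all $k\ge 1$. This is a trivial-zero phenomenon: evaluating at $\Xi^{(k)}$ gives $\sum_{a\in A^+} a^{q^k-1}$, which is a Goss value at a negative integer divisible by $q-1$. That vanishing is exactly what Lemma \ref{lemma-partial-negative} gives, in the form of the sums $\sum_{a\in I(<d)}(a_0+a)^n$ from \cite[Lemma 8.8.1]{GOS}.

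\textbf{Step 3: the product is rational.} Set $F:=\omega^{(1)}Z_{X,A}$. By Steps 1 and 2, $F$ extends to $X_{\CC_\infty}\setminus\{\infty\}$ with no poles except possibly at $\Xi^{(1)}$. I would then check that $F$ has at most polynomial growth at $\infty$. Near $\infty$ the growth of $\omega$ is governed by the functional equation: in the Carlitz case $\omega$ is the Anderson--Thakur function, and in general one uses the Gauss norm estimates of $\omega^{(k)}$ on affinoids. The growth of $Z_{X,A}$ is bounded by the same vanishing-sum estimates used in the entireness proof. With polynomial growth, a rigid GAGA / Liouville argument shows that $F$ is a rational function, regular away from $\infty$ and $\Xi^{(1)}$.

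\textbf{Step 4: identifying $F$ with $\widetilde\pi\delta^{(1)}$.} I would compare $F$ with $\delta^{(1)}$, where $\Div(\delta^{(1)})=V^{(1)}+V^{*(1)}-2g\infty$. The zeros of $F$ at $V^{(1)}$ come from $\omega^{(1)}$. The remaining degree count forces the residual zero divisor to be an effective divisor $W$ of degree $g$ with $h^0=1$ in a specific class, and Lemma \ref{corollary-Weil} identifies $W$ with $V^{*(1)}$. It follows that $F=c\,\delta^{(1)}$ for a constant $c$.

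\textbf{Step 5: the constant.} The constant $c$ is computed by evaluating at $\Xi$, or by taking the residue at $\Xi^{(1)}$ after untwisting. The residue of $\omega$ at $\Xi^{(1)}$ is expressed through $\widetilde\pi$: $\omega$ is the generating series of the $a$-torsion of $\phi$, and $\operatorname{Res}\omega$ is essentially $-\widetilde\pi$. The value $Z_{X,A}(\Xi)$ is $Z_A(0)=1$ by Lemma \ref{lemma-partial-negative}. Normalizing $\omega$ up to $A\otimes 1$ and normalizing $\delta$ then gives $c=\widetilde\pi$. This normalization is also the source of the uniqueness in the statement.

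\textbf{Main obstacle.} I expect the hardest parts to be the growth estimate at $\infty$ in Step 3 and the exact divisor bookkeeping at $\Xi^{(1)}$ and $V^{(1)}$. For the growth estimate, I would first try to reduce to the Carlitz computation via Anderson's twist applied to the partial sums $\sum_{\deg a=d}a\otimes a^{-1}$, proving they equal truncations of $\omega$-type products, as in the Green--Papanikolas approach \cite{GRE&PAP}.
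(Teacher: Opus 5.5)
Your Step 3 carries the whole content of the theorem, and the mechanism you propose for it cannot deliver it as stated. Both $\omega^{(1)}$ and $Z_{X,A}$ have essential singularities at $\infty$. The poles $\Xi^{(k)}$ of $\omega^{(1)}$ and the trivial zeros of $Z_{X,A}$ accumulate there, so $F=\omega^{(1)}Z_{X,A}$ is rational only through an exact cancellation. To detect that cancellation with norms, you would need $\|Z_{X,A}\|_\rho$ up to a bounded power of $\rho$, for $\rho$ ranging between the radii of consecutive $\Xi^{(k)}$. That amounts to knowing all the zeros of $Z_{X,A}$.

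The vanishing-sum estimates of the entireness proof are far from this. As in Proposition \ref{rho-r-E}, they bound the degree-$d$ block $\mathcal{S}_d$ by $(\rho|\pi|)^d|\pi|^{q^{r+1}}$ only for $d\ge c_1 r$, i.e.\ a decay of order $|\pi|^{q^{d/(q-1)}}$. Already for $A=\FF_q[\theta]$ one has $\|\mathcal{S}_d\|_1=|\theta|^{1-q^d}$; the appendix observes the same discrepancy numerically. At $\rho=|\theta|^{q^n}$ the resulting bound on $\|Z_{X,A}\|_\rho$ exceeds the true value $\prod_{i=1}^n\rho|\theta|^{-q^i}$ by roughly $|\theta|^{(q-2)nq^n}$. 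This factor is not polynomial in $\rho$ once $q>2$, so no Liouville argument applies.

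Even an upper bound correct up to a polynomial factor would only give rationality, not the pole order $N=2g$ at $\infty$ that your degree count in Step 4 silently uses. A priori you only get $W\sim{V^*}^{(1)}+(N-2g)\infty$, and Lemma \ref{corollary-Weil} then does not identify $W$.

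The missing idea is exact control of the partial sums rather than estimates. By Proposition \ref{proposition-new-bases} and Lemma \ref{divisor-partial-sums}, $\Div(\mathcal{S}_{\le j_m})=(V^*_m)^{(1)}+\sum_{i=1}^m\Xi^{(i)}-j_m\infty$ with $[V^*_m-g\infty]=v^{(m)}-v$. Hence $h_m=\mathcal{S}_{\le j_m}\delta^{(m+1)}/(f^{(1)}\cdots f^{(m)})$ lies in the fixed finite-dimensional space $\mathcal{L}(3g\infty)$. Iterating $\omega^{(1)}=f\omega$ gives the exact identity $\omega^{(1)}\mathcal{S}_{\le j_m}\delta^{(m+1)}=h_m\,\omega^{(m+1)}$. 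Two further ingredients turn this into a rational identity in the limit:
\begin{enumerate}
\item compactness in $\mathcal{L}(3g\infty)$ along a subsequence with $V^*_{m_i}\to V^*$ (\cite[Proposition 3.20]{FER1});
\item convergence of the rescaled twists $(\alpha\omega)^{(m_i+1)}$ and $(\beta\delta)^{(m_i+1)}$ to functions defined over $\FF_q^{\text{alg}}$.
\end{enumerate}
The divisor of the limit is forced to be that of $\delta^{(1)}$ up to an $\FF_q^{\text{alg}}$-part. The limit also singles out the right shtuka divisor, namely the one dual to $V^*=\lim V^*_{m_i}$, so that $[V-g\infty]=v$. You instead fix $V$ a priori via the period lattice, a matching that would itself need proof.

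Smaller corrections:
\begin{enumerate}
\item By Corollary \ref{meromorphic-omega}, $\omega$ has simple poles at $\Xi^{(k)}$ for all $k\ge0$. So $\omega^{(1)}$ is regular and non-zero at $\Xi$, with poles exactly at $\Xi^{(k)}$ for $k\ge1$, and $F$ has no pole at $\Xi^{(1)}$ at all.
\item The constant is $F(\Xi)=(f\omega)(\Xi)$. That it lies in $H^\times\widetilde\pi$ is a genuine input (\cite[Theorem 5.8]{GAZ&MAU2}), not a residue heuristic.
\item Possible zeros of $\omega$ on $X(\FF_q^{\text{alg}})$ have to be normalized away (\cite[Propositions 4.16 and 4.23]{FER1}).
\item $\delta\in A\otimes H$ needs a separate argument.
\end{enumerate}
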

More generally, Ferraro's result provides an identity as above for any partial zeta function $Z_{X,I}$ and the correspondence $I\mapsto V$ is governed by Artin reciprocity law for the extension $H/K$ but for simplicity we will not consider these aspects here. Note that $Z_{X,A}$ and $\omega$ are transcendental functions; the theorem says that they are meromorphic over $X_{\CC_\infty}\setminus\{\infty\}$ but both have essential singularities supported at $\infty$. Their product combines to neutralize the essential singularities and gives rise to a rational function $\delta$ which is however difficult to compute in full generality, just like $f$.

The zeroes of the right-hand side of \eqref{functional-identity-Ferraro} correspond to the degree $2g$ effective divisor $V^{(1)}+{V^*}^{(1)}$. The points in the support of ${V^*}^{(1)}$ can be assimilated 
with the {\em non-trivial zeroes} of $Z_{X,A}$. At once, through \eqref{functional-identity-Ferraro} we immediately see that there is a meromorphic extension of $Z_{X,A}$ to $X_{\CC_\infty}\setminus\{\infty\}$ (of course, we already know, by Lemma \ref{partial-zeta-are-entire}, that it is entire, and in fact this property can be easily deduced from \eqref{functional-identity-Ferraro} and some calculations on divisors). In other words, the function
$\widetilde{\pi}\delta^{(1)}$ can be viewed in this perspective as a function field alternative of Riemann's $\xi$, that can also be characterized by its 
Hadamard's factorization
$$\xi(s)=\prod _{\rho }\left(1-{\frac {s}{\rho }}\right),$$
running over the non-trivial zeroes $\rho$ of $\zeta$ (\footnote{For convergence, the product filters the zeroes through finite subsets are invariant by conjugation.}). Similarly from Corollary \ref{meromorphic-omega} we deduce that 
$Z_{X,A}$ also has the {\em trivial zeroes} $\Xi^{(1)},\Xi^{(2)},\ldots$. Another suggestive analogy is described in \cite[(1)]{FER1}, where classical Dirichlet $L$-series are viewed as functions with Dirichlet characters as variables, a viewpoint that has 
something to share with some remarks in \cite{PEL}. 

\subsubsection{Examples}

In the settings of \S \ref{in-genus-0}, Ferraro's formula \eqref{functional-identity-Ferraro} becomes simpler and more explicit. 
In this case (see \cite{PEL}) we can choose, writing $t:=\theta\otimes1$ and identifying $\theta$ with $1\otimes\theta$, $$\omega=(-\theta)^{\frac{1}{q-1}}\prod_{i\geq1}\Big(1-\frac{t}{\theta^{q^i}}\Big)^{-1}.$$ This is an invertible element of the Tate algebra
$$\TT_{\FF_q[\theta]}=\Big\{\sum_{j\geq0}a_jt^j:a_j\in \FF_q((1/\theta)),a_j\rightarrow0\Big\}.$$

The function $\delta$ reduces here to the constant function $-1$. The formula becomes
$$\omega^{(1)}Z_{\PP^1,\FF_q[\theta]}=-\widetilde{\pi},$$ with $\widetilde{\pi}$ defined by \eqref{Carlitz-pi} and 
where we can write, explicitly,
$$Z_{\PP^1,\FF_q[\theta]}=\sum_{\begin{smallmatrix} a\in A\\ \text{monic}\end{smallmatrix}}\frac{a(t)}{a}.$$
%We also recall again that in \cite{GRE&PAP}, Green and Papanikolas found similar explicit formulas, precursors of Ferraro's work, in the case where $(X,\infty)$ is an elliptic curve over $\FF_q$.  

\section{Ferraro's Theorem}\label{Drinfeld-shtuka-divisors}

In this section we discuss Ferraro's Theorem \ref{Ferraro-theorem}. For convenience of the reader we review the construction of the shtuka and dual shtuka divisors $V,V^*$ and various rational functions that come into play along with them. We also discuss connected themes (motives, special functions) that should not be dissociated from the theory. To give a start, we discuss the following result of Drinfeld (see \cite{AND2,THA-1,FER1} and \cite[\S 4.3 and more precisely Proposition 4.25]{FER1}).

\begin{Lemma}[Drinfeld]\label{drinfeld-lemma}
Choose a triple $(X,\infty,\sgn)$, let $g$ be the genus of $X$ and $h_A$ the class number of $A$. There exist exactly $h_A$ distinct effective divisors $V$ of degree $g$ on 
$X_{\CC_\infty}$ with $h^0(V)=1$ and such that there exists a rational function $f$ over $X_{\CC_\infty}$ with $\Div(f)=V^{(1)}-V+\Xi-\infty$.
\end{Lemma}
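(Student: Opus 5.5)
The plan is to translate the condition on $V$ into an equation in the Jacobian $J$ of $X$, and then use Lang's theorem on the isogeny $1-F$. Let $J$ be the Jacobian of $X$ and fix the Abel--Jacobi embedding based at $\infty$. To an effective divisor $V$ of degree $g$ attach the class $[V-g\infty]\in J(\CC_\infty)$. The Anderson twist acts on divisors of $X_{\CC_\infty}$ by applying $F$ to points, and $V\mapsto V^{(1)}$ corresponds to the $q$-Frobenius $F_J$ of $J$ acting on $\CC_\infty$-points, since $J$ is defined over $\FF_q$. The existence of $f$ with $\Div(f)=V^{(1)}-V+\Xi-\infty$ is then equivalent to
$$(F_J-1)\big([V-g\infty]\big)=-[\Xi-\infty]\quad\text{in } J(\CC_\infty).$$

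Because $F_J$ is purely inseparable of degree $q^g$, the map $F_J-1$ has differential $-1$. It is therefore a separable isogeny, in fact the Lang isogeny, whose kernel is $J(\FF_q)$. Hence it is surjective on $\CC_\infty$-points, and the set of solutions $P\in J(\CC_\infty)$ is a torsor under $J(\FF_q)$. This set has exactly $\#J(\FF_q)$ elements. Since $\infty$ has degree one, $\#J(\FF_q)=h_A$: the degree-zero class group $\mathrm{Pic}^0(X)(\FF_q)$ is isomorphic to $\mathrm{Pic}(A)$ via restriction, using that $\deg\infty=1$.

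It remains to pass from classes $P$ back to effective divisors with $h^0(V)=1$. Each class of degree $g$ contains an effective divisor, by Riemann--Roch, and $V$ is unique exactly when $h^0(V)=1$. The key claim is that every solution class $P$ satisfies $h^0(g\infty+P)=1$, i.e.\ $g\infty+P$ is non-special. Granting the claim, distinct classes give distinct divisors, each class gives exactly one $V$, and the count is exactly $h_A$.

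The claim is the main obstacle, and I would argue by contradiction. Suppose $h^0(V)\ge2$. Then $V$ is special and moves in a pencil, so $V$ is equivalent to some divisor with an $\FF_q$-rational (or at least Frobenius-stable) structure. I would try first the route used by Drinfeld and Thakur: use that $\Xi$ is not in $X(\FF_q^{\text{alg}})$ and is in fact ``generic'' over $\FF_q^{\text{alg}}$, since $A\to\CC_\infty$ is injective. The idea is to show that a special $V$ would force $[\Xi-\infty]$ into a proper algebraic subvariety of $J$ defined over $\FF_q^{\text{alg}}$, namely the image under $F_J-1$ of the locus $W^1_g$ of special classes. That subvariety meets the curve $X\subset J$ only in finitely many points, all algebraic over $\FF_q$. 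This would contradict the transcendence of $\Xi$.

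Concretely, $(F_J-1)(W_g^1)$ has dimension $\le g-2$, while the image of the curve in $J$ is one-dimensional. Both are defined over $\FF_q$, so their intersection is a subvariety defined over $\FF_q$. If the intersection is finite, its points lie in $X(\FF_q^{\text{alg}})$, and $\Xi$ is excluded. If the curve were contained in it, then, applying the same reasoning to $\FF_q^{\text{alg}}$-points of $X$ with Lang's theorem over finite fields, I would derive a contradiction from a dimension count. If that count does not suffice, I would fall back on the argument of \cite[Proposition 4.25]{FER1}.
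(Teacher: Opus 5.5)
Your translation into the Jacobian and your count of admissible classes are correct. Surjectivity of the Lang isogeny $F_J-1$ on $\CC_\infty$-points, with kernel $J(\FF_q)\cong\operatorname{Cl}(A)$, gives exactly $h_A$ classes $[V-g\infty]$. This is a little more direct than the paper, which builds a solution explicitly as the convergent series $v=\sum_{i\geq0}\xi^{(i)}$ in the formal group. Everything therefore rests on your key claim that each such class has $h^0=1$, and this claim is not established.

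\begin{itemize}
\item The sentence saying that a special $V$ ``moves in a pencil, so is equivalent to a divisor with an $\FF_q$-rational structure'' has no justification.
\item The transcendence argument only settles $g\leq 2$. For $g=2$, $W^1_2=\{[K_X-2\infty]\}$ is an $\FF_q$-point killed by $F_J-1$, so a special solution would force $\Xi=\infty$.
\item For $g\geq 3$, $(F_J-1)(W^1_g)$ is a closed subvariety over $\FF_q$ of dimension $g-2\geq 1$. No dimension count prevents it from containing the image of $X$. If it did contain it, $\Xi$ would lie on it regardless of being transcendental. The finiteness of the intersection is precisely what must be proved.
\item Your remedy of applying ``the same reasoning'' to $\FF_q^{\text{alg}}$-points does not help. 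For such a point $P$, whether a solution of $V^{(1)}-V+P-\infty\sim 0$ can be special is exactly the original problem, with no transcendence left to exploit.
\item Falling back on \cite[Proposition 4.25]{FER1} is citing the lemma rather than proving it.
\end{itemize}

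The missing ingredient is the one the paper uses: Drinfeld's ($\chi=0\Rightarrow h^0=h^1=0$)-lemma in Mumford's form. It needs only $\Xi\neq\infty$, not transcendence.
\begin{itemize}
\item Take $f$ with $\Div(f)=V^{(1)}-V+\Xi-\infty$ and put $\mathcal{F}_n=\mathcal{L}(V+(n-1)\infty)$. Then $f\mathcal{F}_n^{(1)}=\mathcal{L}(V-\Xi+n\infty)\subset\mathcal{F}_{n+1}$. Since $\Xi\neq\infty$, also $\mathcal{F}_n\cap f\mathcal{F}_n^{(1)}=f\mathcal{F}_{n-1}^{(1)}$.
\item Let $n_0\leq 1$ be minimal with $\mathcal{F}_{n_0}\neq 0$. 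Take $0\neq s_0\in\mathcal{F}_{n_0}$ and set $s_k=fs_{k-1}^{(1)}$.
\item Descending with the intersection identity shows $s_k\in\mathcal{F}_{n_0+k}\setminus\mathcal{F}_{n_0+k-1}$. So $s_0,\dots,s_k$ are linearly independent in $\mathcal{F}_{n_0+k}$, whose dimension is $n_0+k$ for $k\gg 0$ by Riemann--Roch. Hence $n_0=1$, i.e.\ $h^0(V-\infty)=0$.
\item Since $\chi(V-\infty)=0$, this gives $h^1(V-\infty)=0$, hence $h^1(V)=0$ and $h^0(V)=1$.
\end{itemize}
Substituting this argument for your last two paragraphs, and keeping your Lang-isogeny count, completes the proof.
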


In the statement above, any divisor $V$ and any function $f$ as above are called respectively {\em a shtuka divisor} and a {\em shtuka function} for $(X,\infty)$. The divisor $V^{(1)}$ denotes the image of $V$ by the map $x\mapsto x^{(1)}$ over $X_{\CC_\infty}$. 
If $g=0$ then $V=0$. Indeed $\Xi-\infty$ is the divisor of the rational function $f=t-\theta$ in the settings of the Carlitz case \S \ref{in-genus-0}. 

\subsection{Proof of Drinfeld's lemma}\label{construction-shtuka-divisors} Among several available proofs of Lemma \ref{drinfeld-lemma}, we follow Mumford \cite{MUM}. More general statements are contained in \cite{THA-1,FER1} (the underlying problem is, given a degree zero divisor $D$, to find effective divisors $V$ such that $V^{(1)}-V+D$ is principal, and the results proposed in these references give partial responses to this).
We first justify the existence of such divisors in the hypotheses of Lemma \ref{drinfeld-lemma}. Denote by $\mathcal{A}$ the jacobian of $X$ (with respect to $\infty$). It is a principally polarized abelian variety over $\FF_q$ of dimension $g$. Its underlying group law is that of the quotient of the group of divisors of degree zero by the subgroup of its principal divisors. We look at the abelian variety
$$\mathcal{A}_{\CC_\infty}=\mathcal{A}\times_{\operatorname{Spec}(\FF_q)}\operatorname{Spec}(\CC_\infty).$$
We identify it with its $\CC_\infty$-points and we write $\mathcal{A}_{\CC_\infty}=\mathcal{A}(\CC_\infty)$. We also identify 
$X_{\CC_\infty}=X(\CC_\infty)$ etc.
There is a surjective reduction group homomorphism:
$$\operatorname{red}:\mathcal{A}(\CC_\infty)\rightarrow\mathcal{A}(\FF_q^{\text{alg}}).$$
It can be easily defined observing that writing $\mathcal{O}_{\CC_\infty}$ for the unit disk of $\CC_\infty$ (the valuation ring),
$\mathcal{A}(\CC_\infty)=\mathcal{A}(\mathcal{O}_{\CC_\infty})$. In this way, the reduction map is induced by the reduction
map of $\mathcal{O}_{\CC_\infty}$ over the residue field $\FF_q^{\text{alg}}$. The reduction maps are studied in \cite{FER1}
when considering $\mathcal{A}(L)$ with $L$ a finite extension of $K_\infty$, but the properties we review here can be easily deduced from that case.

The kernel of $\operatorname{red}$ has a structure of formal group $\widehat{\mathcal{A}}(\mathfrak{m}_{\CC_\infty})$ and can be identified with an open subgroup of $\mathcal{A}(\CC_\infty)$. It is more precisely
the topological subgroup of $\mathcal{A}(\CC_\infty)$ whose elements are the points $x$ of $\mathcal{A}(\CC_\infty)$
such that $x^{(i)}\rightarrow 0$, the neutral element, as $i$ tends to infinity. It is homeomorphic to $\mathfrak{m}_{\CC_\infty}^g$, the direct sum of $g$ copies of the maximal ideal of $\CC_\infty$. This yields an analogue of \eqref{union-gm} and Proposition \ref{proposition-uvw}, namely that $\mathcal{A}(\CC_\infty)$ can be decomposed in a disjoint union:
\begin{equation}
\mathcal{A}(\CC_\infty)=\bigsqcup_{x\in\mathcal{A}(\FF_q^{\text{alg}})}x+\widehat{\mathcal{A}}(\mathfrak{m}_{\CC_\infty}),
\end{equation}
where we sum for the group law of $\mathcal{A}$. In practice, if we want to compare with \eqref{proposition-uvw}, let us remember that there is no analogue of $\QQ$-lines, but $\mathcal{A}(\FF_q^{\text{alg}})$ corresponds to $(\FF_q^{\text{alg}})^\times$
and $\widehat{\mathcal{A}}(\mathfrak{m}_{\CC_\infty})$ corresponds to $\mathfrak{m}_{\CC_\infty}$. 

Now we observe two facts.
(1) There is a commutative diagram (see Anderson \cite[\S 3.1]{AND2}):
\begin{equation}\label{Lang-torsor}
\begin{tikzcd}
Y(\CC_\infty) \arrow[r, "\beta"] \arrow[d, "\Pi"'] & \mathcal{A}(\CC_\infty) \arrow[d, "1-F"] \\
X(\CC_\infty) \arrow[r, "{[}(\cdot)-\infty{]}"'] & \mathcal{A}(\CC_\infty)
\end{tikzcd}
\end{equation}
with $Y(\CC_\infty):=\operatorname{Spec}(B\otimes_{\FF_q}\CC_\infty)\sqcup\{\infty_i:i=1,\ldots,h_A\}$ ($B$ is the integral closure of $A$ in $H$ and $h_A$ the class number of $A$) with a unramified Galois covering $\Pi$ of $X(\CC_\infty)$ whose group of automorphisms is isomorphic to $G=\operatorname{Gal}(H/K)$, abelian of order $h_A$, and which sends the point $\infty_i$ to $\infty$ for all $i$, the map $P\mapsto[P-\infty]$
is the jacobian map sending a point $P$ to the linear class $[P-\infty]\in \mathcal{A}(\CC_\infty)$ of the divisor $P-\infty$ and $1-F$ is the separable isogeny equal to the
identity minus (the group law of $\mathcal{A}$) the $q$-Frobenius, the kernel of which is $\mathcal{A}(\FF_q)$, isomorphic to the class group $\operatorname{Cl}(A)$ of $A$. The diagram is compatible with Artin's isomorphism $G\rightarrow\operatorname{Cl}(A)$.

(2) Let $g$ be the genus of $X$ and $\mathfrak{S}_g$ the group of permutations of a set with $g$ elements.
Let $X(\CC_\infty)^g/\mathfrak{S}_g$ be the proper variety of dimension $g$ whose points are the effective divisors of degree $g$
\cite[Propositions 3.1 and 3.2]{MIL}.
The map 
\begin{equation}\label{Jg}
[(\cdot)-g\infty]:X(\CC_\infty)^g/\mathfrak{S}_g\rightarrow\mathcal{A}(\CC_\infty)\end{equation}
sending a divisor $D$ to the linear class of $D-g\infty$ is a birational equivalence. Note that if $D=\sum_iD_i$ with $D_i$ effective divisors with degree $1$, then
$[D-g\infty]=\sum_i[D_i-\infty]$.

Combining the facts (1) and (2) we can show the existence of shtuka divisors $V$ as requested, in the following way. Note that $\xi:=J(\Xi)\in\widehat{\mathcal{A}}(\mathfrak{m}_{\CC_\infty})$ (so the reduction of $\Xi$ is $\infty$), hence $\xi^{(i)}\rightarrow0$ in $\mathcal{A}(\CC_\infty)$ as $i$ tends to $\infty$. Therefore the Fredholm series
\begin{equation}\label{defi-v}
v:=\sum_{i\geq0}\xi^{(i)}\end{equation}
converges in $\widehat{\mathcal{A}}(\mathfrak{m}_{\CC_\infty})$ (summing for the group law of $\widehat{\mathcal{A}}(\mathfrak{m}_{\CC_\infty})$). Clearly $v$ belongs to the kernel of $1-F$ so that we can compute:
$$\operatorname{Ker}(1-F)=v+\mathcal{A}(\FF_q).$$
Let $w\in\operatorname{Ker}(1-F)$. By fact (2) there exists $V\in X(\CC_\infty)^g/\mathfrak{S}_g$ with $[V-g\infty]=w$. This is one of the effective divisors that we are looking for, the above construction indeed tells us that there are at least $h_A=|G|$ (class number of $A$) such divisors.

We now conclude the proof by showing that the number of effective divisors $V$ of degree $g$ with the required property is exactly $h_A$. To see this we show that given any such $V$, we have $h^0(V)=1$. We follow Mumford \cite[\S 3]{MUM} including here a proof of a special case of Drinfeld's ``($\chi=0\implies h^0=h^1=0$)-Lemma'' (see also \cite[Lemma 3.3.1]{AND2}). 

Given more generally a divisor $D$, we denote by $\mathcal{L}(D)$ the finite dimension $\CC_\infty$-vector space of rational functions $\varphi$ such that $\Div(\varphi)+D\geq0$. 

Let $f$ be a rational function over $X(\CC_\infty)$ such that
\begin{equation}\label{shtuka-function}
\Div(f)=V^{(1)}-V+\Xi-\infty.
\end{equation}
 Note that, as sets, $\mathcal{L}(D)^{(1)}=\mathcal{L}(D^{(1)})$.
Now define:
$$\mathcal{F}_n:=\mathcal{L}(V+(n-1)\infty),\quad n\in\ZZ.$$
By definition we have that $x\in\mathcal{F}_n^{(1)}$ if and only if $\Div(x)+V^{(1)}+(n-1)\infty\geq0$ (recall that $\infty$ is defined over $\FF_q$). This is equivalent to $\Div(fx)+V-\Xi+n\infty\geq 0$. In other words,
\begin{equation}\label{fundamental-iso}
f\mathcal{F}_n^{(1)}=\mathcal{F}_n(\infty-\Xi).
\end{equation}
But $\mathcal{F}_n(\infty-\Xi)=\mathcal{L}(V-\Xi+n\infty)\subset\mathcal{L}(V+n\infty)=\mathcal{F}_{n+1}$. So we have inclusions
for all $n$ as follows. 
\begin{equation}\label{diagram-F}
\begin{tikzpicture}[scale=1]
  % vertices
  \node (A) at (0,2) {$\mathcal{F}_{n+1}$};    % top
  \node (B) at (1,1) {$f\mathcal{F}^{(1)}_{n}$};    % right
  \node (C) at (0,0) {$\mathcal{F}_{n}\cap f\mathcal{F}_n^{(1)}$};    % bottom
  \node (D) at (-1,1) {$\mathcal{F}_{n}$};   % left
  \node (E) at (0,-1) {$f\mathcal{F}_{n-1}^{(1)}$};   % below C

  % edges of the square
  \draw (A) -- (B);
  \draw (B) -- (C);
  \draw (C) -- (D);
  \draw (D) -- (A);

  % extra edge CE
  \draw (C) -- (E);
\end{tikzpicture}
\end{equation}
Now the bottom edge connecting $\mathcal{F}_{n}\cap f\mathcal{F}_n^{(1)}$ and $f\mathcal{F}_{n-1}^{(1)}$ is an equality,
because the intersection equals $\mathcal{L}(V+(n-1)\infty)\cap\mathcal{L}(V-\Xi+n\infty)=\mathcal{L}(V-\Xi+(n-1)\infty)=f\mathcal{F}_{n-1}^{(1)}$.
For example, if $X=\PP^1$, $\infty=(1:0)$ when we have seen that $V=0$, we also have, by direct computation, 
$$\mathcal{F}_n=\operatorname{Vect}_{\CC_\infty}(1,t,\ldots,t^{n-1}),\quad n\geq 1.$$
The rest of the proof will allow us to also see that for these spaces, more appropriate bases exist (see Corollary \ref{corollary-better-bases}).
We now set, for a divisor $D$:
\begin{eqnarray*}
h^0(D)&:=&\dim_{\CC_\infty}(\mathcal{L}(D))\\
h^1(D)&:=&\dim_{\CC_\infty}(\mathcal{L}(K_X-D))=\dim_{\CC_\infty}(\Omega^1_X(-D))\\
\chi(D)&:=&h^0(D)-h^1(D)\\
&=&\deg(D)-g+1,
\end{eqnarray*}
where $K_X$ is a canonical divisor for $X$. The last identity is Riemann-Roch's theorem. Note that if $D\leq D'$ then $\mathcal{L}(D)\subset\mathcal{L}(D')$ so that
$h^0(D)\leq h^0(D')$, and similarly, $h^1(D)\geq h^1(D')$. We want to show $h^0(V-\infty)=h^1(V-\infty)=0$ (the ($\chi=0\implies h^0=h^1=0$)-Lemma).

We immediately see $\chi(\mathcal{F}_n):=\chi(V+(n-1)\infty)=n$ for all $n$. We can find $n$ with $h^0(V+(n-1)\infty)\neq0$.
For example $n=1$ works: $\Div(1)+V\geq 0$ so that $1\in\mathcal{L}(V)$. If $n\ll 0$ however, $h^0(V+(n-1)\infty)=0$. Indeed
if $D$ has negative degree and $f\in\mathcal{L}(D)\setminus\{0\}$, $\Div(f)+D\geq0$ and $\deg(f)+\deg(D)=\deg(D)\geq0$ which is impossible. There exists a smallest integer $n_0\leq 1$ such that $h^0(V+(n_0-1)\infty)\neq0$. 

We show that $n_0=1$. Choose $s\in\mathcal{F}_{n_0}\setminus\{0\}$. Define $s_n=fs_{n-1}^{(1)}\in\mathcal{F}_{n_0+n}$
for all $n\geq 0$ (we proved that $f\mathcal{F}^{(1)}_n\subset\mathcal{F}_{n+1}$). So we have the functions
$$s=s_0,\quad s_1=fs_0^{(1)},\quad s_2=ff^{(1)}s_0^{(2)},\ldots,s_n=ff^{(1)}\cdots f^{(n-1)}s_0^{(n)}.$$
Claim: $s_n\in\mathcal{F}_{n_0+n}\setminus\mathcal{F}_{n_0+n-1}$. Otherwise $s_n\in\mathcal{F}_{n_0+n-1}$
but $s_n=fs_{n-1}^{(1)}\in\mathcal{F}_{n_0+n-1}^{(1)}$ and therefore $s_n=fs_{n-1}^{(1)}\in\mathcal{F}_{n_0+n-1}\cap f\mathcal{F}_{n_0+n-1}^{(1)}=f\mathcal{F}_{n_0+n-2}^{(1)}$. This implies $s_{n-1}=fs_{n-2}^{(1)}\in\mathcal{F}_{n_0+n-2}$
and inductively we get $s_{n-j+1}\in\mathcal{F}_{n_0+n-j}$ for all $j$ so that $s_0\in\mathcal{F}_{n_0-1}$. This latter space is zero by hypothesis and we reach a contradiction because $s_0$ is assumed to be non-zero, justifying that for all $n$, 
$s_n\in\mathcal{F}_{n_0+n}\setminus\mathcal{F}_{n_0+n-1}$ for all $n$.

As a consequence $s_0,s_1,\ldots,s_n$ are linearly independent in $\mathcal{F}_{n_0+n}$ for all $n$. For $n\gg0$ we have
$$n+1\leq h^0(V+(n+n_0-1)\infty)=\chi(V+(n+n_0-1)\infty)=n_0+n$$ (if $n$ is large enough, $h^1(V+(n+n_0-1)\infty)=0$). Hence
$n_0=1$ and therefore, $h^0(V-\infty)=h^1(V-\infty)=0$. But then, $h^0(V)=1$ as we wanted, indeed $\chi(\mathcal{F}_1)=h^0(V)-h^1(V)=1$ but from $0=h^0(V-\infty)=h^1(V-\infty)\geq h^1(V)$ we deduce $h^1(V)=0$ implying that $h^0(V)=1$. So $\mathcal{F}_1=\CC_\infty$ and $V$ is the unique effective divisor in its linear class (assuming by contradiction that $W$ is another such effective divisor with $V\sim W$ (the symbol $\sim$ denotes linear equivalence of divisors) then there exists $h$ rational function such that $\Div(h)=W-V$ so that $\Div(h)+V\geq0$, $h\in\mathcal{F}_1$ and $h\in\CC_\infty$, so that $V=W$).

We record:

\begin{Corollary}\label{corollary-better-bases}
Consider an integer $n\geq 0$. The $n+1$ functions $ff^{(1)}\cdots f^{(i-1)}$ for $i=0,\ldots,n$ determine a $\CC_\infty$-basis of $\mathcal{F}_{n+1}$.
\end{Corollary}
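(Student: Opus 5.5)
The plan is to read the basis off the argument just given in the proof of Lemma \ref{drinfeld-lemma}, now that $n_0=1$ is known. Take $s_0=1$, which is allowed because $\mathcal{F}_1=\mathcal{L}(V)=\CC_\infty$ (we showed $h^0(V)=1$). The recursion $s_n=fs_{n-1}^{(1)}$ then gives
$$s_i=ff^{(1)}\cdots f^{(i-1)}\cdot 1^{(i)}=ff^{(1)}\cdots f^{(i-1)},$$
using that the Anderson twist fixes $1$ (for $i=0$ the empty product is $1$). So the functions in the statement are exactly the $s_i$ constructed there, with $n_0=1$.

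Next, the inclusion $f\mathcal{F}_m^{(1)}\subset\mathcal{F}_{m+1}$ from \eqref{fundamental-iso} shows by induction that $s_i\in\mathcal{F}_{i+1}$. The claim established in that proof, applied with $n_0=1$, gives $s_i\in\mathcal{F}_{i+1}\setminus\mathcal{F}_{i}$. It uses only the equality $\mathcal{F}_m\cap f\mathcal{F}_m^{(1)}=f\mathcal{F}_{m-1}^{(1)}$, the injectivity of the twist, and $\mathcal{F}_0=\mathcal{L}(V-\infty)=0$. Since the $\mathcal{F}_m$ form an increasing filtration, $s_0,\ldots,s_n$ lie in strictly increasing steps of it. Hence they are linearly independent in $\mathcal{F}_{n+1}$: in a nontrivial relation, the term with the largest index with nonzero coefficient would lie in the span of the others, hence in $\mathcal{F}_{i}$, a contradiction.

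Finally, we compare dimensions. Since $\chi(V+n\infty)=n+1$, it suffices to show $h^1(V+n\infty)=0$ for $n\geq0$. This holds because $h^1$ is non-increasing in the divisor and $h^1(V-\infty)=0$ was proved. Therefore $\dim\mathcal{F}_{n+1}=n+1$, and the $n+1$ independent functions form a basis. The only delicate point is checking that the non-membership claim from the lemma's proof applies verbatim with $s_0=1$ and $n_0=1$. It does, because $\mathcal{F}_0=0$ is exactly the hypothesis that proof needed.
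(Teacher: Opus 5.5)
Your proposal is correct and is essentially the paper's own argument. The corollary is recorded as the specialization, with $n_0=1$ and $s_0=1$, of the construction in the proof of Lemma \ref{drinfeld-lemma}: the claim $s_i\in\mathcal{F}_{i+1}\setminus\mathcal{F}_i$ gives linear independence, and $h^1(V+n\infty)\le h^1(V-\infty)=0$ gives $\dim\mathcal{F}_{n+1}=\chi(V+n\infty)=n+1$ for every $n\geq 0$, as you check.
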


In particular, $$1,t-\theta,\ldots,(t-\theta)(t-\theta^q)\cdots(t-\theta^{q^{n-1}})$$
is another $\CC_\infty$-basis of $\mathcal{F}_{n+1}=\mathcal{L}(n\infty)$ in the Carlitz case $X=\PP^1$ and $\infty=(1:0)$ of \S \ref{in-genus-0}.
 
\subsection{Dual shtuka divisors}

The results and proofs of \S \ref{construction-shtuka-divisors} can be modified in various ways (we refrained to give a general statement). One of them is given by the following variant of Lemma \ref{drinfeld-lemma}, where $h_A$ is the class number of $A$. In the statement, $\sim$ denotes linear equivalence.

\begin{Lemma}\label{lemma-dual-shtuka}
In the settings of Lemma \ref{drinfeld-lemma}, there exist exactly $h_A$ effective divisors $V^*$ of degree $g$ such that
$$V^*-(V^*)^{(1)}+\Xi-\infty\sim0.$$ Any such divisor satisfies $h^0(V^*)=1$.
\end{Lemma}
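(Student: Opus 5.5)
The plan is to mimic the proof of Lemma \ref{drinfeld-lemma} given in \S\ref{construction-shtuka-divisors}, replacing the equation $(1-F)w=-\xi$ by $(1-F)w=\xi$, where $\xi=[\Xi-\infty]$. Equivalently, we look for $w\in\mathcal{A}(\CC_\infty)$ with $w-w^{(1)}+\xi=0$ in the group law of $\mathcal{A}$.

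\emph{Existence and count.} Since $\xi\in\widehat{\mathcal{A}}(\mathfrak{m}_{\CC_\infty})$ and $\xi^{(i)}\to0$, I cannot simply use $\sum_{i\geq0}\xi^{(i)}$, since that solves the other sign. Instead I would take the convergent series $w_0:=-\sum_{i\geq 1}\xi^{(-i)}$. Its convergence is less obvious: the inverse Frobenius does not contract. So I would rather argue via surjectivity. The map $1-F$ is a separable isogeny of $\mathcal{A}_{\CC_\infty}$, hence surjective on $\CC_\infty$-points because $\CC_\infty$ is algebraically closed. Its fibres are cosets of $\operatorname{Ker}(1-F)=\mathcal{A}(\FF_q)$, which has cardinality $h_A$. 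Hence the set of $w$ with $w-w^{(1)}=-\xi$ is a coset $w_0+\mathcal{A}(\FF_q)$ with exactly $h_A$ elements. (Alternatively, one can obtain $w_0$ from $-v$ with $v$ as in \eqref{defi-v}: if $(1-F)v=\xi$ up to sign conventions, then $-v$ solves the dual equation. This is the cleanest route, and I would check the sign carefully.) For each such $w$, fact (2) (the birational map \eqref{Jg}) gives an effective $V^*$ of degree $g$ with $[V^*-g\infty]=w$, hence $V^*-(V^*)^{(1)}+\Xi-\infty\sim0$. Conversely, any such $V^*$ gives such a $w$. So the count is exactly $h_A$ once we know that each class $w$ contains a unique effective divisor, which follows from $h^0(V^*)=1$.

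\emph{The dimension statement.} Let $f^*$ be a rational function with $\Div(f^*)=V^*-(V^*)^{(1)}+\Xi-\infty$. Rewrite this as $\Div(1/f^*)=(V^*)^{(1)}-V^*+\infty-\Xi$ and set $\mathcal{G}_n:=\mathcal{L}(V^*+(n-1)\infty)$. Now the twist goes the other way: for $x\in\mathcal{G}_n$ one checks $\Div(f^* x^{(1)})+V^*+(n-1)\infty\geq \Xi-\infty+\ldots$, or more conveniently one uses the inverse twist, with $f^{*(-1)}$ relating $\mathcal{G}_n^{(-1)}$ and $\mathcal{G}_{n+1}$. Concretely, I expect $\frac{1}{f^{*(-1)}}\mathcal{G}_n^{(-1)}=\mathcal{L}(V^*+(n-1)\infty+\Xi^{(-1)}-\infty)$. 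This is contained in $\mathcal{G}_{n+1}$ only after allowing the extra pole at $\Xi^{(-1)}$, which is the main obstacle. The symmetric choice that works is to run Mumford's argument with the roles of $\Xi$ and $\infty$ exchanged: filter by $\mathcal{G}_n:=\mathcal{L}(V^*+(n-1)\Xi')$ for a suitable point, or rather use Serre duality.

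Thus the step I would actually try first is duality. Since $\chi(V^*-\infty)=0$, it suffices to show $h^1(V^*-\infty)=0$. Serre duality identifies $H^1(V^*-\infty)$ with differentials $\eta$ satisfying $\Div(\eta)\geq V^*-\infty$. Given such $\eta\neq0$, I would produce from $\eta$ and $f^*$ an infinite family of linearly independent differentials in $\Omega^1(-V^*+n\infty)$, namely $\eta, f^*\eta^{(1)},\ldots$, where the twist of differentials is the Anderson twist on coefficients. I would use the same strict-filtration argument, the ``Claim'' in the proof above, to show independence, and then contradict Riemann--Roch, which bounds the dimension by roughly $n$ minus a positive constant. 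This yields $h^0(V^*-\infty)=h^1(V^*-\infty)=0$, hence $h^1(V^*)=0$ and $h^0(V^*)=1$ exactly as at the end of \S\ref{construction-shtuka-divisors}. Verifying that the twisted differentials have the right divisors, with $\Xi$ and $\infty$ swapping roles, is where I expect the care to be needed.
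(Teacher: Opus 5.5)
Your final route is correct, but it differs from the paper's. The paper never works with $V^*$ and $f^*$ directly. It fixes a shtuka divisor $V$ with shtuka function $f$ and runs Mumford's flag argument on $\mathcal{G}_n=\mathcal{L}(-V+(2g-1+n)\infty)$, using $f^{(-1)}\mathcal{G}_n^{(-1)}\subset\mathcal{G}_{n+1}$. This gives $h^0(-V+2g\infty)=1$, and the paper takes $V^*$ to be the unique effective divisor in the class of $2g\infty-V$, i.e.\ the Weil inverse, $[V^*-g\infty]=-[V-g\infty]$. The count $h_A$ is then inherited from Drinfeld's lemma, and $h^0(V^*)=1$ from $\mathcal{L}(V^*)\cong\mathcal{L}(-V+2g\infty)$ via $\delta$.

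You instead count directly with the Lang isogeny $1-F$, which is surjective on $\CC_\infty$-points with kernel $\mathcal{A}(\FF_q)$ of order $h_A$. You then prove $h^1(V^*-\infty)=0$ by Serre duality, and this checks out. Set $\mathcal{E}_n:=\{\eta:\Div(\eta)\geq V^*-n\infty\}$. Then $f^*\mathcal{E}_n^{(1)}=\{\eta:\Div(\eta)\geq V^*+\Xi-(n+1)\infty\}\subset\mathcal{E}_{n+1}$ and $\mathcal{E}_n\cap f^*\mathcal{E}_n^{(1)}=f^*\mathcal{E}_{n-1}^{(1)}$, while $\dim\mathcal{E}_n=n-1$ for $n>g$. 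So a nonzero $\eta$, taken at its minimal level, produces via the Claim too many independent differentials. Define the twist as $h\,da\mapsto h^{(1)}da$ for a separating $a\in K$; it is well defined and satisfies $\Div(\eta^{(1)})=\Div(\eta)^{(1)}$ because $\Div(da)$ is $\FF_q$-rational.

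Your route is self-contained, needing no $V$, $f$ or $\delta$, at the price of duality. The paper's route yields for free the bijection $V\leftrightarrow V^*$ and the function $\delta$ of Lemma \ref{corollary-Weil}, which is what Ferraro's theorem later uses.

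Two corrections to your exploratory steps.

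\emph{The ``main obstacle'' is a miscomputation.} Multiplying (not dividing) by $f^{*(-1)}$ gives $f^{*(-1)}\mathcal{L}((V^*)^{(-1)}+(n-1)\infty)=\mathcal{L}(V^*-\Xi^{(-1)}+n\infty)\subset\mathcal{L}(V^*+n\infty)$, with the analogous intersection property. So Mumford's argument runs verbatim on $\mathcal{L}(V^*+(n-1)\infty)$ with the inverse twist, and duality is unnecessary. The extra pole only appears with the forward twist, where $\frac{1}{f^*}\mathcal{L}((V^*)^{(1)}+(n-1)\infty)=\mathcal{L}(V^*+\Xi+(n-2)\infty)$.

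\emph{Your opening signs are reversed.} The shtuka equation is $(1-F)w=\xi$, solved by $v$; the dual one is $(1-F)w=-\xi$, solved by $-v$. Your candidate $-\sum_{i\geq1}\xi^{(-i)}$ both diverges and formally solves $(1-F)w=\xi$. This is harmless, since you replace it with the surjectivity argument.
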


\begin{proof}[Sketch of proof] We can modify the proof of Lemma \ref{drinfeld-lemma} (($\chi=0\implies h^0=h^1=0$)-Lemma) but, to add some turbulences, we can also use that lemma itself to reach what we precognize.
We choose a shtuka divisor $V$. We modify the flag of vector spaces $\mathcal{F}_n$
by setting $$\mathcal{G}_n:=\mathcal{L}(-V+(2g-1+n)\infty)$$ so that $\mathcal{G}_0=\mathcal{L}(-V+(2g-1))$ and $\mathcal{G}_n=\mathcal{G}_0(n\infty)$. Again we have $\mathcal{G}_n\subset\mathcal{G}_{n+1}$ for all $n$ and $x\in\mathcal{G}_n$ is equivalent, with $f$ shtuka function such that $\Div(f)=V^{(1)}-V+\Xi-\infty$, to $\Div(fx)-\Div(f)-V+(2g-1+n)\infty\geq0$, and replacing 
$\Div(f)$ with $V^{(1)}-V+\Xi-\infty$ we have $x\in\mathcal{G}_n$ if and only if $fx\in\mathcal{L}(-V^{(1)}-\Xi+(2g+n)\infty)=\mathcal{G}_n^{(1)}(\infty-\Xi)\subset
\mathcal{G}_{n+1}^{(1)}$. 
In other words,
\begin{equation}\label{fundamental-iso-dual}
f\mathcal{G}_n=\mathcal{G}^{(1)}_n(\infty-\Xi).
\end{equation}
Hence we get the diagram of inclusions, where the bottom edge is an equality.
\[
\begin{tikzpicture}[scale=1]
  % vertices
  \node (A) at (0,2) {$\mathcal{G}_{n+1}$};    % top
  \node (B) at (1,1) {$f^{(-1)}\mathcal{G}^{(-1)}_{n}$};    % right
  \node (C) at (0,0) {$\mathcal{G}_{n}\cap f^{(-1)}\mathcal{G}_n^{(-1)}$};    % bottom
  \node (D) at (-1,1) {$\mathcal{G}_{n}$};   % left
  \node (E) at (0,-1) {$f^{(-1)}\mathcal{G}_{n-1}^{(-1)}$};   % below C

  % edges of the square
  \draw (A) -- (B);
  \draw (B) -- (C);
  \draw (C) -- (D);
  \draw (D) -- (A);

  % extra edge CE
  \draw (C) -- (E);
\end{tikzpicture}
\]
Just as in the proof of Lemma \ref{construction-shtuka-divisors} we see that $\chi(\mathcal{G}_n)=n$ for all $n$,
following the same ideas we deduce easily that $h^0(-V+(2g-1)\infty)=h^1(-V+(2g-1)\infty)=0$. In particular we get
$h^0(-V+2g\infty)=1$. We have reached:

\begin{Lemma}\label{corollary-Weil}
Given a shtuka divisor $V$ there exists a unique effective divisor $V^*$ of degree $g$ such that
$$V^*+V-2g\infty\sim0$$ and up to a non-zero scalar, a unique rational function $\delta$ such that
$(\delta)=V+V^*-2g\infty$.
\end{Lemma}
We also have $h^0(V^*)=1$. Additionally, a quick check shows that we have isomorphisms induced by the multiplication by $\delta$
\begin{equation}\label{isomorphisms-delta}
\mathcal{G}_n\xrightarrow{x\mapsto x\delta}\mathcal{L}(V^*+(n-1)\infty),\quad n\in\ZZ.
\end{equation}
If we set $\mathcal{F}^*_n:=\mathcal{L}((V^*)^{(1)}+(n-1)\infty)$, we obtain a flag of vector spaces of rational functions over $X(\CC_\infty)$ analogous to the previously mentioned flag $\mathcal{F}_n$.
The existence of divisors like $V^*$ (dual shtuka divisors) is also guaranteed by the surjectivity of $[(\cdot)-g\infty]$ by taking inverse images of $-v\in\widehat{\mathcal{A}}(\mathfrak{m}_{\CC_\infty})$ with $v$ defined in 
\eqref{defi-v}. This goes back to the interpretation of $V^*$ as the opposite of $V$ for Weil's birational group laws.
\end{proof}

Let $f^*$ be any rational function such that $\Div(f^*)=V^*-(V^*)^{(1)}+\Xi-\infty$ with $V^*$ any divisor as in Lemma \ref{drinfeld-lemma}. We can construct it explicitly by choosing $f$ with divisor $V^{(1)}-V+\Xi-\infty$, then setting $\tilde{f}=\hat{f}^{-1}$ where $$\hat{f}=\frac{\delta}{(f\delta)^{(-1)}},$$
which satisfies
$$\Div(\tilde{f})=(V^*)^{(-1)}-V^*+\Xi^{(-1)}-\infty,$$
and then taking
$$f^*:=\tilde{f}^{(1)}=\frac{f\delta}{\delta^{(1)}},$$
or any non-zero multiple of it.

\begin{Corollary}\label{corollary-better-bases-dual}
Consider an integer $n\geq 0$. The functions ${f^*}^{(-1)}{f^*}^{(-2)}\cdots {f^*}^{(-i)}$ for $i=0,\ldots,n$ determine a $\CC_\infty$-basis of $\mathcal{G}_{n+1}$. Equivalently the functions ${f^*}{f^*}^{(-1)}\cdots {f^*}^{(1-i)}$ for $i=0,\ldots,n$ determine a $\CC_\infty$-basis of $\mathcal{F}^*_{n+1}$.
\end{Corollary}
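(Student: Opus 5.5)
The plan is to prove the second formulation, about $\mathcal{F}^*_{n+1}=\mathcal{L}((V^*)^{(1)}+n\infty)$, directly, and then transport it to $\mathcal{G}_{n+1}$. The transport uses the isomorphisms \eqref{isomorphisms-delta} given by multiplication by $\delta$, the relation $f^*=f\delta/\delta^{(1)}$ and the Anderson twist.

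\emph{Membership.} Set $s^*_i:=f^*{f^*}^{(-1)}\cdots{f^*}^{(1-i)}$. From $\Div(f^*)=V^*-(V^*)^{(1)}+\Xi-\infty$ and $\Div(h^{(-j)})=\Div(h)^{(-j)}$, with $\infty$ fixed by the twist, the product telescopes:
$$\Div(s^*_i)=(V^*)^{(1-i)}-(V^*)^{(1)}+\sum_{j=0}^{i-1}\Xi^{(-j)}-i\infty .$$
Adding $(V^*)^{(1)}+n\infty$ gives an effective divisor whenever $0\le i\le n$. Hence $s^*_0=1,\ldots,s^*_n$ all lie in $\mathcal{F}^*_{n+1}$.

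\emph{Dimension.} Riemann--Roch gives $\chi((V^*)^{(1)}+n\infty)=n+1$. It remains to show $h^1((V^*)^{(1)}-\infty)=0$; this is the dual ($\chi=0\implies h^0=h^1=0$)-Lemma. I would get it from the sketch of Lemma \ref{lemma-dual-shtuka}: there $h^0=h^1=0$ was obtained for $-V+(2g-1)\infty$. Multiplication by $\delta$ transports this to $V^*-\infty$, by \eqref{isomorphisms-delta} with $n=0$. Serre duality transports the $h^1$ statement, and Frobenius twisting preserves dimensions, so $h^1((V^*)^{(1)}-\infty)=0$. Monotonicity of $h^1$ then gives $h^1=0$ for all $n\ge0$, so $\dim\mathcal{F}^*_{n+1}=n+1$.

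\emph{Linear independence.} I would first try pole orders at $\infty$. The function $s^*_i$ has a pole of order exactly $i$ at $\infty$ as soon as $\infty\notin\operatorname{supp}(V^*)$. This should follow from $h^0(V^*)=1$ together with $h^0(V^*-\infty)=0$: if $\infty\le V^*$, then $1\in\mathcal{L}(V^*-\infty)$. Distinct pole orders then force independence, and a dimension count finishes the proof.

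As a fallback, I would mimic the strict-flag argument in the proof of Lemma \ref{drinfeld-lemma}. The analogue of \eqref{fundamental-iso} is ${f^*}^{(-1)}\mathcal{F}^{*(-1)}_n=\mathcal{L}((V^*)^{(1)}-\Xi+n\infty)$, and the bottom edge of the diamond is an equality. The same descending induction then shows $s^*_i\in\mathcal{F}^*_{i+1}\setminus\mathcal{F}^*_i$.

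\emph{Transport to $\mathcal{G}_{n+1}$.} This is the step I expect to be the main obstacle: bookkeeping the twists and $\delta$ exactly. The twist $(-1)$ maps $\mathcal{F}^*_{n+1}$ onto $\mathcal{L}(V^*+n\infty)$. Dividing by $\delta$ maps this onto $\mathcal{G}_{n+1}$, by \eqref{isomorphisms-delta}. Under this composite, $s^*_i$ goes to $\delta^{-1}{f^*}^{(-1)}\cdots{f^*}^{(-i)}$.

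So the first formulation as literally stated must be read up to the fixed factor $\delta^{-1}$, or equivalently with $\mathcal{G}_{n+1}$ identified with $\mathcal{L}(V^*+n\infty)$ via $\delta$. I would check this normalization by recomputing divisors directly: ${f^*}^{(-1)}\cdots{f^*}^{(-i)}$ has divisor $(V^*)^{(-i)}-V^*+\sum_{j=1}^{i}\Xi^{(-j)}-i\infty$, which lies in $\mathcal{L}(V^*+n\infty)$. With this reading, the two statements are equivalent because all the maps involved are bijective.
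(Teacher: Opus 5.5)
Your proof of the statement about $\mathcal{F}^*_{n+1}$ is correct. The transport to $\mathcal{G}_{n+1}$, however, inverts the factor $\delta$.

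\textbf{Comparison of routes.} The paper gives no separate argument for this corollary. It is meant to fall out of the flag argument sketched for Lemma \ref{lemma-dual-shtuka}, just as Corollary \ref{corollary-better-bases} falls out of the proof of Lemma \ref{drinfeld-lemma}. One takes a generator of the line $\mathcal{G}_1$ and iterates $x\mapsto f^{(-1)}x^{(-1)}$, which maps $\mathcal{G}_n$ into $\mathcal{G}_{n+1}$ by \eqref{fundamental-iso-dual}. The diamond of inclusions then shows that the $i$-th iterate lies in $\mathcal{G}_{i+1}\setminus\mathcal{G}_i$, which yields the vanishing $h^0=h^1=0$ and the basis at the same time.

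You instead read off the exact pole order $i$ at $\infty$ from the telescoped divisor and finish with a Riemann--Roch count, importing the cohomological input from Lemma \ref{lemma-dual-shtuka}. This is shorter and more transparent. Two simplifications are available:
\begin{enumerate}
\item You do not need $\infty\notin\operatorname{supp}V^*$. The twist fixes $\infty$, so the multiplicities of $\infty$ in $(V^*)^{(1-i)}$ and $(V^*)^{(1)}$ cancel anyway.
\item The bound $\dim\mathcal{F}^*_{n+1}\le n+1$ follows from $h^0(V^*)=1$ alone, via $\dim\mathcal{L}(D+\infty)\le\dim\mathcal{L}(D)+1$.
\end{enumerate}
In your fallback, the operator should be $x\mapsto f^*x^{(-1)}$. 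Then $f^*\mathcal{F}^{*(-1)}_n=\mathcal{L}((V^*)^{(1)}-\Xi+n\infty)$ and $s^*_i=f^*(s^*_{i-1})^{(-1)}$; with ${f^*}^{(-1)}$ in front, the identity you wrote is false.

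\textbf{The error in the transport step.} Since $\Div(\delta)=V+V^*-2g\infty$, the condition $\Div(y)\ge -V^*-n\infty$ gives $\Div(y\delta)\ge V-(2g+n)\infty$. So it is multiplication by $\delta$, not division, that maps $\mathcal{L}(V^*+n\infty)$ onto $\mathcal{G}_{n+1}=\mathcal{L}(-V+(2g+n)\infty)$. The arrow in \eqref{isomorphisms-delta} as printed should read $x\mapsto x\delta^{-1}$, and you inherited that misprint.

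The correct version of the first formulation is therefore that the functions $\delta\,{f^*}^{(-1)}\cdots{f^*}^{(-i)}$, $i=0,\ldots,n$, form a basis of $\mathcal{G}_{n+1}$. Equivalently, the bare products form a basis of $\delta^{-1}\mathcal{G}_{n+1}=\mathcal{L}(V^*+n\infty)$. The functions $\delta^{-1}{f^*}^{(-1)}\cdots{f^*}^{(-i)}$ do not work.

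Already $i=0$ decides it:
\begin{itemize}
\item $\Div(\delta)-V+2g\infty=V^*\ge0$, so $\delta$ spans $\mathcal{G}_1$.
\item $\Div(\delta^{-1})-V+2g\infty=-2V-V^*+4g\infty$ is not effective for $g>0$, so $\delta^{-1}\notin\mathcal{G}_1$.
\end{itemize}
This also agrees with the flag construction. Starting from $\delta$, the iterates are $f^{(-1)}\cdots f^{(-i)}\delta^{(-i)}=\delta\,{f^*}^{(-1)}\cdots{f^*}^{(-i)}$, using $f^*=f\delta/\delta^{(1)}$.

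You were right that the statement as printed fails for $g>0$ (already $1\notin\mathcal{G}_1$). The divisor check you proposed would not have caught the inversion, because it only tests membership in $\mathcal{L}(V^*+n\infty)$ and never involves $\delta$.
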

So if $i=0$, the basis is $\{1\}$. For example, $$1,t-\theta^{\frac{1}{q}},\ldots,(t-\theta^{\frac{1}{q}})(t-\theta^{\frac{1}{q^2}})\cdots(t-\theta^{\frac{1}{q^n}})$$
is a $\CC_\infty$-basis of $\mathcal{G}_{n+1}=\mathcal{L}(n\infty)$ in the case $X=\PP^1$ and $\infty=(1:0)$.

\begin{Corollary}
The points $\infty,\Xi$ do not belong to the supports of $V$ and $V^*$.
\end{Corollary}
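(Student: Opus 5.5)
We argue by contradiction, using only the ($\chi=0\implies h^0=h^1=0$)-Lemma established above, namely $h^0(V-\infty)=0$ and, from the proof of Lemma \ref{lemma-dual-shtuka}, $h^0(-V+(2g-1)\infty)=0$, together with the isomorphisms \eqref{isomorphisms-delta}.

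\emph{The point $\infty$.} If $\infty$ were in the support of $V$, then $V-\infty$ would be effective. The constant function $1$ would then lie in $\mathcal{L}(V-\infty)$, contradicting $h^0(V-\infty)=0$. The same argument applies to $V^*$: since $\delta$ induces an isomorphism $\mathcal{G}_0\cong\mathcal{L}(V^*-\infty)$ by \eqref{isomorphisms-delta} with $n=0$, we have $h^0(V^*-\infty)=h^0(-V+(2g-1)\infty)=0$. So $\infty\notin\operatorname{supp}(V^*)$ either.

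\emph{The point $\Xi$ in $V$.} Suppose $\Xi\le V$, so that $V-\Xi$ is effective. By \eqref{fundamental-iso} with $n=1$,
$$f\mathcal{F}_1^{(1)}=\mathcal{L}(V-\Xi+\infty).$$
The left side is $\CC_\infty f$, because $\mathcal{F}_1^{(1)}=\mathcal{L}(V^{(1)})=\CC_\infty$. The right side contains $1$, since $V-\Xi+\infty\ge0$. Hence $1\in\CC_\infty f$, so $f$ is constant. This is impossible, because $\Div(f)=V^{(1)}-V+\Xi-\infty\neq0$: its degree-one part $\Xi-\infty$ cannot be cancelled, as $\infty\notin\operatorname{supp}(V)\cup\operatorname{supp}(V^{(1)})$ by the previous step.

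\emph{The point $\Xi$ in $V^*$.} Here we use \eqref{fundamental-iso-dual} with $n=0$:
$$f\mathcal{G}_0=\mathcal{G}_0^{(1)}(\infty-\Xi).$$
Both sides vanish, since $h^0(-V+(2g-1)\infty)=0$ and twisting preserves dimensions, so this gives nothing directly. Instead I would work with $f^*=f\delta/\delta^{(1)}$, whose divisor is $V^*-(V^*)^{(1)}+\Xi-\infty$. Consider the dual flag $\mathcal{F}^*_n=\mathcal{L}((V^*)^{(1)}+(n-1)\infty)$ and the identity
$$(f^*)^{-1}\mathcal{F}^*_1=\mathcal{L}(V^*-\Xi+\infty),$$
which is the analogue of \eqref{fundamental-iso} with the roles of $V$ and $V^{(1)}$ reversed. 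If $\Xi\le V^*$, then $1\in\mathcal{L}(V^*-\Xi+\infty)$. The left side equals $\CC_\infty (f^*)^{-1}$, because $\mathcal{F}^*_1=\CC_\infty$ (as $h^0((V^*)^{(1)})=h^0(V^*)=1$). So $f^*$ would be constant, which contradicts $\Div(f^*)\neq0$ by the same degree argument, using $\infty\notin\operatorname{supp}(V^*)$.

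The main obstacle is checking that identity for $f^*$. Multiplying by $f^*$, it amounts to showing
$$\mathcal{L}((V^*)^{(1)})=f^*\mathcal{L}(V^*-\Xi+\infty),$$
and this follows directly from the divisor of $f^*$. One should also confirm that $h^0((V^*)^{(1)})=1$, which holds because twisting commutes with $\mathcal{L}$. As a fallback, $\Xi\notin\operatorname{supp}(V^*)$ can instead be obtained by twisting $\Div(\delta)=V+V^*-2g\infty$ and comparing it with $\Div(f^*)$.
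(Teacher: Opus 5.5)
Your arguments for $\infty\notin\operatorname{supp}(V)$, $\infty\notin\operatorname{supp}(V^*)$ and $\Xi\notin\operatorname{supp}(V)$ are correct. They use $h^0(V-\infty)=0$, $h^0(V^*-\infty)=h^0(-V+(2g-1)\infty)=0$ and \eqref{fundamental-iso}. The paper instead appeals only to $h^0(V)=h^0(V^*)=1$, presumably meaning: an effective divisor linearly equivalent to $V^{(\pm1)}$ must equal it, and one then compares multiplicities at the Frobenius-fixed point $\infty$.

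Your last step, however, rests on a false identity. From $\Div(f^*)=V^*-(V^*)^{(1)}+\Xi-\infty$ you get $\Div(y)+(V^*)^{(1)}=\Div(y/f^*)+V^*+\Xi-\infty$, hence
\[
(f^*)^{-1}\mathcal{L}\big((V^*)^{(1)}\big)=\mathcal{L}(V^*+\Xi-\infty),
\]
not $\mathcal{L}(V^*-\Xi+\infty)$. Swapping the roles of the twisted and untwisted divisors does not flip the sign of $\Xi-\infty$. This space contains $1$ only when $\infty\le V^*$, so it merely reproves the $\infty$ case. The true mirror of \eqref{fundamental-iso} is $f^*\mathcal{L}(V^*)=\mathcal{L}\big((V^*)^{(1)}-\Xi+\infty\big)$. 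With it, your argument proves $\Xi\notin\operatorname{supp}\big((V^*)^{(1)}\big)$, i.e.\ $\Xi^{(-1)}\notin\operatorname{supp}(V^*)$.

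This cannot be patched, and that includes your fallback via $\delta$, because $\Xi\notin\operatorname{supp}(V^*)$ is false in general. Take $q=4$, $\omega\in\FF_4\setminus\FF_2$, and $X\colon y^2+y=x^3+\omega$ with origin $\infty$.
\begin{itemize}
\item The duplication formula, together with $\omega^2+\omega=1$, gives $[2](x,y)=(x^4,\,x^6+x^3+y+1)=(x^4,y^4)$. So the $q$-Frobenius is $F=[2]$; in particular $X(\FF_4)=\ker(F-1)=\{\infty\}$ and $h_A=1$.
\item In the group law the shtuka condition reads $F(V)-V+\Xi=0$, so $V=-\Xi$.
\item The condition $V+V^*-2\infty\sim0$ then forces $V^*=\Xi$. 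Consistently, $V^*-(V^*)^{(1)}+\Xi-\infty=2\Xi-\Xi^{(1)}-\infty$ is principal, since $\Xi^{(1)}=2\Xi$ in the group law.
\end{itemize}
So the $V^*$ half of the corollary only holds with $V^*$ replaced by $(V^*)^{(1)}$. That is also all an argument from $h^0(V^*)=1$, like the paper's, can deliver, and it is exactly what your corrected identity proves.
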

\begin{proof}
It follows from the property $h^0(V)=h^0(V^*)=1$, see also \cite[Corollary 0.3.3]{THA-1}.
\end{proof}
The shtuka functions $f$, the dual shtuka functions $f^*$, and the functions $\delta$, determined up to multiplication by a scalar factor, are difficult to compute in general, for arbitrary choices of $X$. For $X$ hyperelliptic, explicit formulas exist, see \cite{FER1}.

\subsection{Anderson motives}

We set 
\begin{eqnarray*}
\mathcal{M}&:=& H^0(X_{\CC_\infty}\setminus\{\infty\},\mathcal{O}_{X_{\CC_\infty}}(V))=\bigcup_{n\geq1}\mathcal{F}_n\\ \mathcal{M}^*&:=& H^0(X_{\CC_\infty}\setminus\{\infty\},\mathcal{O}_{X_{\CC_\infty}}(V^*)^{(1)})=\bigcup_{n\geq1}\mathcal{F}_n^*.
\end{eqnarray*}
These are infinite dimension $\CC_\infty$-vector spaces of rational functions over $X_{\CC_\infty}$, but they have considerably more structure
and this is what we want to discuss now. We denote by $\CC_\infty[\tau]$ the non-commutative $\CC_\infty$-algebra of polynomials $a_0+\cdots+a_n\tau^n$ with $a_0,\ldots,a_n\in\CC_\infty$ with the commutation rule $\tau c=c^q\tau$
for $c\in\CC_\infty$. Let $f$ be a shtuka function for $(X,\infty)$. By Corollary \ref{corollary-better-bases}, the map $\tau^i\mapsto ff^{(1)}\cdots f^{(i-1)}$
extends to a $\CC_\infty$-isomorphism
\begin{equation}\label{iota-iso}
\CC_\infty[\tau]\xrightarrow{\iota}\mathcal{M}.
\end{equation}
The diagrams \eqref{diagram-F} guarantee that the above morphism equips $\mathcal{M}$ with a structure of left $\CC_\infty[\tau]$-module, free of rank one generated by $1\in\mathcal{M}$, depending on the choice of the shtuka function $f$. The action of $\tau$ on an element $x\in\mathcal{M}$ is defined by $\tau x:=fx^{(1)}$, where the map $x\mapsto x^{(1)}$ is the Anderson twist discussed in \S \ref{partial-zeta-functions}, which is $A\otimes1$-linear. In a similar vein, defining this time $\CC_\infty[\tau^{-1}]$
to be the $\CC_\infty$-algebra of polynomials $a_0+\cdots+a_n\tau^{-n}$ with $a_0,\ldots,a_n\in\CC_\infty$ with the commutation rule $\tau^{-1} c=c^\frac{1}{q}\tau^{-1}$, the choice of a dual shtuka function $f^*$ determines a structure of left
$\CC_\infty[\tau^{-1}]$-module on $\mathcal{M}^*$, which is free of rank one generated by $1$. The $\CC_\infty$-isomorphism
is defined by Corollary \ref{corollary-better-bases-dual} by sending $\tau^{-i}$ to ${f^*}{f^*}^{(-1)}\cdots {f^*}^{(1-i)}$, 
and if $x\in\mathcal{M}^*$ we set $\tau^{-1}x=f^*x^{(-1)}$ (as already noticed, since $\CC_\infty$ is perfect, the Anderson twist is an automorphism).

More is true.

\begin{Lemma}\label{anderson-motives}
the vector space $\mathcal{M}$ carries a structure of right $A$-module commuting with the left $\CC_\infty[\tau]$-module structure.
So it can be seen as a left $(A\otimes_{\FF_q}\CC_\infty)[\tau]$-module where $\tau$ commutes with all the elements $a\otimes1$ with $a\in A$.
\end{Lemma}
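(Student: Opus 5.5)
The right $A$-action will simply be multiplication by the functions $a\otimes 1$. For $a\in A$ and $x\in\mathcal{M}$, set $x\cdot a:=(a\otimes1)x$, the product of rational functions on $X_{\CC_\infty}$.

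\emph{Step 1: $\mathcal{M}$ is stable under this action.} The function $a\otimes1$ is regular on $X_{\CC_\infty}\setminus\{\infty\}$, with a pole of order $\deg(a)$ at $\infty$. If $x\in\mathcal{F}_n=\mathcal{L}(V+(n-1)\infty)$, then
$$\Div((a\otimes1)x)+V+(n+\deg(a)-1)\infty\geq 0.$$
Hence $(a\otimes1)x\in\mathcal{F}_{n+\deg(a)}\subset\mathcal{M}$. Equivalently, $\mathcal{M}=H^0(X_{\CC_\infty}\setminus\{\infty\},\mathcal{O}(V))$ is a module over the ring $A\otimes_{\FF_q}\CC_\infty$ of regular functions on $X_{\CC_\infty}\setminus\{\infty\}$. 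Multiplication is commutative and associative, so this is a module structure. It is $\CC_\infty$-linear and commutes with the scalar action of $\CC_\infty\subset\CC_\infty[\tau]$.

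\emph{Step 2: the action commutes with $\tau$.} Recall $\tau x=fx^{(1)}$. The Anderson twist is a ring automorphism of $\operatorname{Frac}(A\otimes\CC_\infty)$ and fixes $A\otimes1$, since $(a\otimes c)^{(1)}=a\otimes c^q$. Therefore
$$\tau(x\cdot a)=f\big((a\otimes1)x\big)^{(1)}=f(a\otimes1)x^{(1)}=(a\otimes1)fx^{(1)}=(\tau x)\cdot a.$$
By induction on the powers of $\tau$ and linearity, every element of $\CC_\infty[\tau]$ commutes with the right $A$-action.

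\emph{Step 3: $\mathcal{M}$ as an $(A\otimes\CC_\infty)[\tau]$-module.} Let $A\otimes\CC_\infty$ act by multiplication and $\tau$ act as above. The only relations to check are $\tau c=c^q\tau$, which is built into the $\CC_\infty[\tau]$-structure, and $\tau(a\otimes1)=(a\otimes1)\tau$, which is Step 2. Together these are exactly the defining commutation rule $\tau(a\otimes c)=(a\otimes c^q)\tau$ of the skew polynomial ring.

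The only point needing care is that the action of $\tau$ preserves $\mathcal{M}$. This is guaranteed by the inclusion $f\mathcal{F}_n^{(1)}\subset\mathcal{F}_{n+1}$ from \eqref{fundamental-iso}. Beyond that the argument is formal. The same argument applies verbatim to $\mathcal{M}^*$ with $\tau^{-1}$ and $f^*$.
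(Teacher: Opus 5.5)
Your proof is correct, and it takes a more direct, coordinate-free route than the paper. Both arguments rest on the same key fact, that the Anderson twist fixes $A\otimes1$. You apply it once to the generator $\tau$, getting $\tau\big((a\otimes1)x\big)=(a\otimes1)\tau x$, and conclude by linearity.

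The paper instead works in the basis $s_i=ff^{(1)}\cdots f^{(i-1)}$ of Corollary \ref{corollary-better-bases}. It expands $a\otimes1=\sum_i[a]_is_i$ and uses $(b\otimes1)^{(i)}=b\otimes1$ to get $s_i(b\otimes1)=\sum_j[b]_j^{q^i}s_{i+j}$. This shows that, under $\iota$, multiplication by $a\otimes1$ becomes right multiplication by $\phi_a=\sum_i[a]_i\tau^i$ in $\CC_\infty[\tau]$. Commutation with the left structure is then just associativity of $\CC_\infty[\tau]$.

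Your version is shorter and needs neither that basis nor the freeness of $\mathcal{M}$ over $\CC_\infty[\tau]$. The paper's version buys something extra: it produces at the same time the Drinfeld module $a\mapsto\phi_a$, with $[a]_0=a$, $[a]_{\deg a}\neq0$ and $\phi_{ab}=\phi_a\phi_b$. That is what is used afterwards, in Definition \ref{definition-Drinfeld} and for special functions.

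Your argument recovers this formally as well. Since $\iota$ identifies $\mathcal{M}$ with the free left module $\CC_\infty[\tau]$, every left $\CC_\infty[\tau]$-endomorphism is right multiplication by the image of $1$. So multiplication by $a\otimes1$ is right multiplication by $\phi_a:=\iota^{-1}(a\otimes1)$, and $\phi_{ab}=\phi_a\phi_b$ follows by composing the two multiplications and evaluating at $1$.
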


\begin{proof}
We explain how to define a right action of $A$ on $\mathcal{M}$. Note that for all $a\in A$, $\deg(a)=d$,
$a\otimes1\in\mathcal{L}(d\infty)\subset\mathcal{L}(V+(d+1-1)\infty)=\mathcal{F}_{d+1}$.
Applying Corollary \ref{corollary-better-bases-dual} we can expand, in unique way,
\begin{equation}\label{decomposition-a}
a\otimes1=\sum_{i=0}^{d}[a]_iff^{(1)}\cdots f^{(i-1)}\in\mathcal{F}_{d+1},
\end{equation}
where the coefficients $[a]_0,\ldots,[a]_d$ are in $\CC_\infty$ with $d=\deg(a)$. Note that $f(\Xi)=0$ while $f^{(i)}$ has no pole in $\Xi$ for $i>0$, so that
$$[a]_0=(a\otimes1)(\Xi)=\Xi(a)=a\in\CC_\infty.$$ Also note that $[a]_d\neq0$. Indeed otherwise, observe the equality of divisors
\begin{equation}\label{interated-shtuka}
ff^{(1)}\cdots f^{(i-1)}=V^{(i)}-V+\sum_{j=0}^{i-1}\Xi^{(j)}-i\infty,\quad i\geq0.
\end{equation}
Assuming by contradiction that $[a]_d=0$ for some $a\in A$ of degree $d$, we would have $a\otimes1\in\mathcal{L}((d-1)\infty)$ against the hypothesis on the degree.
For all $n$,
$$(a\otimes1)\mathcal{F}_{n}\subset\mathcal{F}_{d+n}.$$ the multiplication by $a\otimes1$ is therefore an injective 
$\CC_\infty$-endomorphism of $\mathcal{M}$. Coming back to the isomorphism $\iota$ in \eqref{iota-iso}, multiplying by $a\otimes1$ corresponds to the action of an injective endomorphism $\phi_a$ of $\CC_\infty[\tau]$ that has the property that 
for all $n$, it sends $\mathcal{F}_n$ to $\mathcal{F}_{d+n}$. 
Note that for $b\in A$, $(b\otimes1)^{(i)}=b\otimes1$ for all $i$ (the equality of endomorphisms $\tau(b\otimes1)=(b\otimes1)\tau$ follows from the associativity of $\CC_\infty[\tau]$). Therefore, if $b\otimes1=\sum_{j=0}^e[b]_jf\cdots f^{(j-1)}$ (with $e=\deg(b)$), for all $i$,
$$b\otimes1=(b\otimes1)^{(i)}=\sum_{j=0}^{e}[b]_j^{q^i}f^{(i)}f^{(i+1)}\cdots f^{(i+j-1)}.$$ 
Consider $s_i=ff^{(1)}\cdots f^{(i-1)}$ an element of the basis considered in Corollary \ref{corollary-better-bases} (choosing $s_0=1$). 
We have 
\begin{equation}\label{si}
s_i(b\otimes1)=s_i(b\otimes1)^{(i)}=\sum_{j=0}^{e}[b]_j^{q^i}ff^{(1)}\cdots f^{(i-1)}f^{(i)}f^{(i+1)}\cdots f^{(i+j-1)}
=\sum_{i=0}^{d}[b]_i^{q^j}s_{i+j}.\end{equation}

Define, for all $a\in A$,
\begin{equation}\label{drinfeld-module}
\phi_a:=\sum_{i=0}^{\deg(a)}[a]_i\tau^i\in\CC_\infty[\tau],
\end{equation} for coefficients $[a]_i\in\CC_\infty$. By \eqref{si} and uniqueness,
$\phi_a\phi_b=\phi_{ab}$.
Hence, for $a,b\in A$, $\phi_a$ and $\phi_b$ commute: $$\phi_{a}\phi_b=\phi_{ab}=\phi_{ba}=\phi_b\phi_a.$$
Identifying $\mathcal{M}$ with $\CC_\infty[\tau]$ as above via $\iota$, the left multiplication by the rational function
$a\otimes1$ corresponds to the right multiplication by $\phi_a$. 
\end{proof}

The ring $A\otimes_{\FF_q}\CC_\infty$ is a Dedekind domain. One shows that $\mathcal{M}$ is a projective $A\otimes_{\FF_q}\CC_\infty$-module of rank one. Following \cite[Definition 2.3.1]{HAR&JUS}, it is an {\em effective $A$-motive} of rank one. 
Indeed note that by \eqref{fundamental-iso}, the multiplication by $f$ induces an injective $A\otimes_{\FF_q}\CC_\infty$-module map
$$\tau^*\mathcal{M}\xrightarrow{\tau_\mathcal{M}}\mathcal{M}$$
where $\tau^*\mathcal{M}=\cup_n\mathcal{F}_{n}^{(1)}=\mathcal{M}^{(1)}$ seen as an $A\otimes_{\FF_q}\CC_\infty$-module (note that for all $n$, $\mathcal{F}_n(\infty-\Xi)\subset\mathcal{F}_{n+1}$). The kernel of the multiplication map $A\otimes_{\FF_q}\CC_\infty\rightarrow\CC_\infty$ defined by $a\otimes c\mapsto ac$ is the maximal ideal $\mathfrak{j}$ generated by the 
elements $a\otimes1-1\otimes a$, $a\in A$ and it is easy to see that the above injective map gives rise, inverting $\mathfrak{j}$, to an isomorphism $\tau_\mathcal{M}:\tau^*\mathcal{M}[\mathfrak{j}^{-1}]\rightarrow\mathcal{M}[\mathfrak{j}^{-1}]$.
This construction obviously depends on the choice of the shtuka function $f$. 

\begin{Remark}{\em 
Similarly, $\mathcal{M}^*$ has a left $(A\otimes_{\FF_q}\CC_\infty)[\tau^{-1}]$-module structure. For the proof, everything formally proceeds in the same way as for $\mathcal{M}$ (this is very much in conformity with Mumford's philosophy in \cite{MUM}). From Corollary \ref{corollary-better-bases-dual}, for every $a\in A$, 
$$a\otimes1=\sum_{i=0}^{d}[a]^*_i{f^*}{f^*}^{(-1)}\cdots {f^*}^{(1-i)}\in\mathcal{F}^*_d,$$
where the coefficients $[a]^*_0,\ldots,[a]^*_d$ are in $\CC_\infty$. Note that $f^*(\Xi)=0$, so that
$[a]^*_0=a\in\CC_\infty.$ Again we get pairwise commuting operators
$\phi^*_a\in\CC_\infty[\tau^{-1}]$, with $a\in A$, with the properties we expect. In particular, $\mathcal{M}^*$ is an {\em effective dual $A$-motive}, or {\em co-motive}, in the sense of \cite[Definition 2.4.1]{HAR&JUS} or \cite[Definition 3.6]{GAZ&MAU}. To see this define $\mathcal{N}:=\cup_{n\geq1}\mathcal{G}_n$, another infinity dimension sub-vector space of the space of rational functions over $X_{\CC_\infty}$. By \eqref{isomorphisms-delta} and the above discussion, it is a $A\otimes_{\FF_q}\CC_\infty$-module isomorphic to $\mathcal{M}^*$, and it is projective. By \eqref{fundamental-iso-dual} the multiplication by $f$ this time determines an injective map 
$$\mathcal{N}\xrightarrow{\tau_\mathcal{N}}\tau^*\mathcal{N},$$ and the conditions requested are satisfied. By \eqref{isomorphisms-delta} again, we find that $\mathcal{M}^*$ carries a structure of dual $A$-motive.
}
\end{Remark}

The proof above motivates the following:

\begin{Definition}\label{definition-Drinfeld}{\em A {\em Drinfeld $A$-module of rank one} is an injective $\FF_q$-algebra homomorphism
$$\phi: A\rightarrow\CC_\infty[\tau]$$
such that, for all $a\in A$, $\phi_a:=\phi(a)$ admits an expansion (\ref{drinfeld-module}) with $[a]_0=a$ and $[a]_{\deg(a)}\neq 0$.}
\end{Definition}

The proof of Lemma \ref{anderson-motives} unveils a correspondence, that will not be discussed here, between data composed by triples $(X,\infty,\sgn)$
and {\em sign-normalized Drinfeld modules of rank one} (we have not discussed much sign functions in the above). It is a Drinfeld module of rank one
$\psi:A\rightarrow B[\tau]$ ($B$ is the ring of integers of $H$) such that for all $d\geq 0$ and for all $a\in A$ of degree $d$, $\psi_a=[a]_0+[a]_1\tau+\cdots+[a]_d\tau^d$ with $[a]_0=a$ and $[a]_d=\sgn(a)$. By \cite[Theorem 7.2.15]{GOS}, every
Drinfeld $A$-module of rank one is isomorphic to a sign-normalized Drinfeld module of rank one (as in Definition \ref{definition-Drinfeld}). Two Drinfeld $A$-modules $\phi,\psi$ are {\em isomorphic} (over $\CC_\infty$) if there exists $c\in\CC_\infty^\times$ such that for all $a\in A$, the following identity in $\CC_\infty[\tau]$ holds: $$\phi_ac=c\psi_a.$$ Additionally, distinct sign-normalized Drinfeld modules of rank one are not isomorphic, and by \cite[Proposition 7.2.17]{GOS}, there are exactly $h_A$ (the class number of $A$) isomorphism classes of rank one Drinfeld modules. These properties we mentioned here are not difficult to prove taking into account the diagram
\eqref{Lang-torsor}; {\em shtuka correspondence} \cite[\S 6.2]{GOS} describes a bijective correspondence between shtuka divisors and classes of isomorphism of Drinfeld $A$-modules. From this, one can also deduce that a Drinfeld $A$-module $\phi$ of rank one has all its shtuka functions
proportional to one $f$ which satisfies 
$$f\in\operatorname{Frac}(A\otimes_{\FF_q}H)\subset\operatorname{Frac}(A\otimes_{\FF_q}K_\infty).$$ 
See \cite[Chapter 7]{GOS} for the complete theory.

\subsection{Special functions}\label{Special-functions} This is yet another crucial part in the structures we are describing. Special functions were introduced,
along with the terminology, in \cite[\S 3.2]{ANG&NGO&TAV1}. Choose a shtuka function $f$ for $(X,\infty)$.
\begin{Definition}\label{definition-Special-functions}{\em A {\em special function} $\omega$ is an element of $\TT_A$ such that 
\begin{equation}\label{tau-difference-equation}
\omega^{(1)}=f\omega.\end{equation}}\end{Definition} 
The set of special functions is an $A$-module denoted by $\mathfrak{Sf}(\phi)$. More precisely, the multiplication of the special function $\omega$ by $a$ is $(a\otimes1)\omega$ (remember that $(a\otimes1)^{(1)}=a\otimes1\in\TT_A$). 
Choose a Drinfeld $A$-module of rank one $\phi$ with shtuka function $f$. 
Using \eqref{decomposition-a}
one proves that equivalently, 
$$\mathfrak{Sf}(\phi)=\{\omega\in\TT_A:\phi_a(\omega)=(a\otimes1)\omega\},$$
where $\phi_a$ denotes here the $A\otimes1$-linear extension of the operator $\phi_a\in\CC_\infty[\tau]$ to $\TT_a$, see
\cite[Lemma 3.6]{ANG&NGO&TAV1}. Moreover, the structure of this $A$-module is satisfactorily described in \cite[Theorem 3.11]{GAZ&MAU}. We deduce that $\mathfrak{Sf}(\phi)$ is isomorphic to an invertible ideal of $A$. In particular,
it is not the zero module. Hence:

\begin{Corollary}\label{meromorphic-omega}
The $A$-module $\mathfrak{Sf}(\phi)$, non-trivial, is contained in the field of meromorphic functions over $X(\CC_\infty)\setminus\{\infty\}$. Any non-zero element of this module, seen as a meromorphic function,
has a simple pole at $\Xi^{(i)}$ for all $i\geq 0$.
\end{Corollary}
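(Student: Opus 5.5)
The plan is to use the functional equation $\omega^{(1)}=f\omega$ to continue $\omega$ meromorphically in the direction of $\infty$, and then to read off the divisor from the divisor of $f$.

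\emph{Step 1: continuation.} A non-zero $\omega\in\mathfrak{Sf}(\phi)$ is an element of $\TT_A$, so it is analytic on the affinoid $\boldsymbol{D}_X$. Rewrite the defining equation as
$$\omega=f^{(-1)}\omega^{(-1)}.$$
Iterating gives, for every $n\geq 1$,
$$\omega=f^{(-1)}f^{(-2)}\cdots f^{(-n)}\,\omega^{(-n)}.$$
Let $U_n\subset X_{\CC_\infty}\setminus\{\infty\}$ be the rigid region where $\omega^{(-n)}$ makes sense, i.e.\ the image of $\boldsymbol{D}_X$ under the $n$-fold inverse of the Anderson-twist action on points. Since $|\chi(a)^{q^{n}}|=|\chi(a)|^{q^n}$, points with coordinates of absolute value $\leq r$ are mapped into $\boldsymbol{D}_X$ after twisting $n$ times, once $r^{1/q^n}$ is suitably bounded. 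Hence the $U_n$ form an increasing admissible cover of $X_{\CC_\infty}\setminus\{\infty\}$. The right-hand side is meromorphic on $U_n$, because the $f^{(-i)}$ are rational. The extensions on different $U_n$ agree on overlaps by the functional equation, so they glue to a meromorphic function on $X(\CC_\infty)\setminus\{\infty\}$.

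\emph{Step 2: poles and zeros.} Compute the divisor of the prefactor using the twisted form of \eqref{interated-shtuka}:
$$\Div\bigl(f^{(-1)}\cdots f^{(-n)}\bigr)=V-V^{(-n)}+\sum_{j=1}^{n}\Xi^{(-j)}-n\infty.$$
Following the paper's convention, the poles should sit at $\Xi^{(i)}$ with $i\geq 0$. This means the points of $X_{\CC_\infty}$ on which the equation $\omega=f^{-1}\omega^{(1)}$ forces a pole are those where $f$ vanishes, namely $\Xi$ and its Frobenius iterates lying outside $\boldsymbol{D}_X$. I will therefore arrange the orientation as follows: continue via $\omega=f^{-1}\omega^{(1)}$, twisting toward points further from $\boldsymbol{D}_X$. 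Then
$$\omega=\bigl(ff^{(1)}\cdots f^{(n-1)}\bigr)^{-1}\omega^{(n)},$$
with prefactor divisor
$$-\Bigl(V^{(n)}-V+\sum_{j=0}^{n-1}\Xi^{(j)}-n\infty\Bigr).$$
This yields poles of order at most one at $\Xi,\Xi^{(1)},\dots,\Xi^{(n-1)}$. The divisors $V,V^{(n)}$ contribute zeros or poles, but $V\subset\boldsymbol{D}_X$-type considerations, together with the regularity of $\omega^{(n)}$ at $V^{(n)}$, should cancel them. The points $\Xi^{(j)}$ are pairwise distinct and lie outside $\boldsymbol{D}_X$ (they tend to $\infty$). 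Since $\infty,\Xi\notin\operatorname{supp}V$ by the previous Corollary, no cancellation can occur at $\Xi^{(j)}$ from the $V$-terms, except possibly if some $\Xi^{(j)}$ lies in $\operatorname{supp} V^{(k)}$. I will exclude this by applying the Corollary to the twisted shtuka divisors.

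\emph{Step 3: simple poles are genuine.} It remains to show that $\omega^{(n)}(\Xi^{(j)})\neq 0$, so that each pole is actually present. This is the main obstacle. I would first show that $\omega$ has no zero on $\boldsymbol{D}_X$, i.e.\ $\omega\in\TT_A^\times$ for a suitable generator; for a general element of the invertible-ideal module, zeros of $a\otimes 1$ only occur at finite points. For this, use \cite[Theorem 3.11]{GAZ&MAU} and the Carlitz model $\omega=(-\theta)^{1/(q-1)}\prod_i(1-t/\theta^{q^i})^{-1}$ as a guide: a generator is a unit times a product as above. Alternatively, argue with the residue: if $\omega$ had no pole at $\Xi$, then $\omega^{(1)}=f\omega$ would vanish at $\Xi$, and propagating through the iterates would force $\omega$ to vanish at infinitely many $\Xi^{(-j)}\in\boldsymbol{D}_X$ accumulating in the affinoid. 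That contradicts $\omega\neq 0$ there. Once the pole at $\Xi$ is established, the pole at each $\Xi^{(i)}$ follows by applying the twist relation inductively.
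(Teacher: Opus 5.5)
\textbf{There is a genuine gap in the continuation step.} After you reverse the orientation, your Step 2 is the paper's formula $\omega=\omega^{(n)}/(ff^{(1)}\cdots f^{(n-1)})$. However, nothing in your argument makes the domain grow. The Frobenius $\chi\mapsto\chi^{(1)}$ maps $\boldsymbol{D}_X$ bijectively onto itself. Boundedness of $\{|a(\chi)|:a\in A\}$ already forces $|a(\chi)|\le 1$ for all $a$, because $a^n\in A$; and then $|a(\chi)|^{q^{\pm1}}\le 1$ as well. So if all you know is that $\omega\in\TT_A$ is analytic on $\boldsymbol{D}_X$, then $\omega^{(\pm n)}$ is again known only on $\boldsymbol{D}_X$. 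Your regions $U_n$ are therefore all equal to $\boldsymbol{D}_X$, in either orientation. The justification ``once $r^{1/q^n}$ is suitably bounded'' fails because $r^{1/q^n}>1$ for every $n$ when $r>1$. The functional equation by itself never leaves the affinoid.

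\textbf{The missing ingredient is overconvergence.} Writing $\omega=\sum_i c_ia_i$ in a Banach basis, there is $\rho>1$ with $c_i\rho^{\deg(a_i)}\to0$, so $\omega$ is analytic on $\boldsymbol{D}_X(\rho)$. The paper deduces this from $\phi_a(\omega)=(a\otimes1)\omega$ for a non-constant $a$. In the Carlitz case you can see it directly: the recursion $c_i^q=c_{i-1}-\theta c_i$, together with $c_i\to0$, forces $|c_{i-1}|=|\theta|\,|c_i|$ for large $i$. Only with this input is $\omega^{(n)}$ analytic on $F^n(\boldsymbol{D}_X(\rho))=\boldsymbol{D}_X(\rho^{q^n})$, and these domains exhaust $X(\CC_\infty)\setminus\{\infty\}$.

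\textbf{Step 3 depends on the same missing input.} The points $\Xi^{(-j)}$ are never in $\boldsymbol{D}_X$, since $|a(\Xi^{(-j)})|=|a|^{q^{-j}}$ is unbounded in $a$. They also do not accumulate at any point of $\CC_\infty$. What is true is that $\Xi^{(-j)}\in\boldsymbol{D}_X(\rho)$ for $j\gg0$.

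Once overconvergence is available, your residue idea becomes a clean argument, cleaner than the paper's ``in particular''. Put $o_j=\operatorname{ord}_{\Xi^{(j)}}\omega$. Then $\omega^{(1)}=f\omega$ gives $o_{j-1}=\operatorname{ord}_{\Xi^{(j)}}(f)+o_j$. A non-zero analytic function on the connected affinoid $\boldsymbol{D}_X(\rho)$ has only finitely many zeros, so $o_j=0$ for $j\ll0$. If $\operatorname{ord}_{\Xi^{(j)}}(f)=\delta_{j,0}$ for all $j\in\ZZ$, this forces $o_j=-1$ for every $j\geq0$.

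That last hypothesis amounts to $\Xi^{(j)}\notin\operatorname{supp}V$ for all $j$. ``Applying the Corollary to the twisted shtuka divisors'' does not give it: it only yields $\Xi^{(k)}\notin\operatorname{supp}V^{(k)}$, which is equivalent to $\Xi\notin\operatorname{supp}V$. Finally, the detour through $\TT_A^\times$ and ``a suitable generator'' is not needed for any of this.
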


\begin{proof}
By \cite[Theorem 3.11]{GAZ&MAU}, $\mathfrak{Sf}(\phi)$ is non-trivial, it contains non-constant functions
over $\boldsymbol{D}_X$ the affinoid subset defined in \eqref{DX}. Pick one of them, call it $\omega$. 
Now set, for $\rho\in|\CC_\infty^\times|$, 
$$\boldsymbol{D}_X(\rho):=\{\chi\in X(\CC_\infty)\setminus\{\infty\}:\{|(a\otimes 1)(\chi)|\rho^{-\deg(a)}:a\in A\}\text{ is bounded}\}.$$
Of course $\boldsymbol{D}_X=\boldsymbol{D}_X(1)\subset\boldsymbol{D}_X(\rho)$ if $\rho\geq1$. 
We have $X(\CC_\infty)\setminus\{\infty\}=\cup_\rho\boldsymbol{D}_X(\rho)$, so that we have determined a partial order 
over $X(\CC_\infty)$ by means of the map $\mu:X(\CC_\infty)\setminus\{\infty\}\rightarrow|\CC_\infty^\times|$
defined by $$\mu(\chi):=\inf\{\rho:\chi\in\boldsymbol{D}_X(\rho)\}.$$ In a similar way we can construct totally ordered 
subsets of $X(\CC_\infty)\setminus\{\infty\}$ that are isometric with $|\CC_\infty^\times|$ but we will not need this here.

Since $\omega\in\TT_A$ we can choose a Banach basis $(a_i\otimes1)_{i\geq0}\subset A\otimes1$ of $\TT_A$ and expand 
\begin{equation} \label{series-omega}
\omega=\sum_{i\geq0}c_ia_i,\quad c_i\in\CC_\infty.
\end{equation} Then $\lim_{i\rightarrow\infty}c_i=0$.
The fact that 
$\omega$ satisfies $\phi_a(\omega)=(a\otimes1)\omega$ for a non-constant $a\in A$ implies that the series \eqref{series-omega} is {\em overconvergent}, that is, 
there exists $\rho>1$ such that $c_i\rho^{\deg(a_i)}\rightarrow0$. Therefore there exists $\rho>1$ such that 
$\omega$ extends to a rigid analytic function over $\boldsymbol{D}_X(\rho)$. At once $\omega^{(1)}$ is rigid analytic over
 $\tau(\boldsymbol{D}_X(\rho))=\boldsymbol{D}_X(\rho^q)\supset \boldsymbol{D}_X(\rho)$ and by \eqref{tau-difference-equation}, $\omega=\frac{\omega^{(1)}}{f}$ extends to a meromorphic function over $\boldsymbol{D}_X(\rho^q)$. Similarly,
 $$\omega=\frac{\omega^{(n)}}{ff^{(1)}\cdots f^{(n-1)}}$$ extends to a meromorphic function over $\boldsymbol{D}_X(\rho^{q^n})$ for all $n\geq0$. We conclude observing that $$\bigcup_n(\boldsymbol{D}_X(\rho^{q^n}))=X(\CC_\infty)\setminus\{\infty\}.$$
 In particular, any non-zero element $\omega\in\mathfrak{Sf}(\phi)$, has simple poles at the points $\Xi,\Xi^{(1)},\ldots$.
\end{proof}

\begin{Remark}{\em The terminology ``overconvergence’’ has been employed, in the above proof, for the following reasons. In an only apparently separate topic Dwork, \cite[Discussion around (5)]{DWO}, constructs a certain overconvergent $p$-adic function $\theta(t)$ that, after analysis of the proof structure, appears to play a role which is analogous to our special functions (that however live on the $\infty$-adic side). In order to maintain the present work self-contained we will not investigate the relationship between Weil’s conjectures, Dwork’s proof, and our theory, but there are underlying deep connections. The author plans to discuss these aspects in another work.}\end{Remark}

\subsection{Partial sums of partial zeta functions}
In this subsection we follow the arguments of \cite{CHU&NGO&PEL}. 
Define $0=j_0<j_1<\cdots$ where for all $d$, $j_d$ is minimal with the property that $\mathcal{L}(j_d\infty)=d+1$. By Riemann-Roch theorem
we have $j_d=d+g$ for $d$ large enough, and there are $g$ Weierstrass gaps in this sequence. Taking $a_0=1$ and, for all $d$, an element $a_{d}\in A$ of degree $j_d$, we have a basis $(a_0,\ldots,a_d)$ of $\mathcal{L}(j_d\infty)$. We are going to see that 
the ``elementary bricks'' of the partial sums of $Z_{X,A}$ determine an alternative basis for $\mathcal{L}(j_d\infty)$.
We define the rational functions
\begin{equation*}
\mathcal{S}_{j_m}:=\sum_{\begin{smallmatrix}a\in A\\ \text{monic}\\ \deg(a)=j_m\end{smallmatrix}}a\otimes a^{-1}\in A\otimes K,
\quad\mathcal{S}_{\leq j_m}:=\sum_{d=0}^m\mathcal{S}_{j_d}.
\end{equation*}
We have
$$Z_{X,A}=\sum_{m\geq0}\mathcal{S}_{j_m},$$ converging for the Gauss norm.
The next result is proved in \cite[\S 2.2]{CHU&NGO&PEL}, compare with \cite[Propositions 6.3, 6.4]{GRE&PAP}. The proof relies on Riemann-Roch theorem and induction.

\begin{Proposition}\label{proposition-new-bases}
The following properties hold.
\begin{enumerate}
\item For all $d\geq 0$, $(\mathcal{S}_{j_0},\ldots,\mathcal{S}_{j_d})$ is a basis of $\mathcal{L}(j_d\infty)$.
\item For all $m\geq 1$, $$\mathcal{S}_{j_m}^{(1)}=\mathcal{S}_{\leq j_m}\cdot\sum_{\begin{smallmatrix}
a\in A \\ \text{monic} \\ \deg(a)=j_m
\end{smallmatrix}}\frac{1}{a^{q-1}}.$$
\end{enumerate}
\end{Proposition}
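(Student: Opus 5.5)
Both parts will come from one structural fact: the power sums $\sum_{a\in A(<d)}(c+a)^n$ vanish when $n$ is small compared with $(q-1)\dim_{\FF_q}A(<d)$ (\cite[Lemma 8.8.1]{GOS}, as in the proof of Lemma \ref{lemma-partial-negative}). We will also need the elementary fact that the monic elements of degree $j_m$ form an affine space $a_m+\mathcal{L}((j_m-1)\infty)\cap A$. For brevity write $A(<j_m):=\mathcal{L}((j_m-1)\infty)\cap A=\mathcal{L}(j_{m-1}\infty)\cap A$, a space of dimension $m$ over $\FF_q$ with basis $a_0,\ldots,a_{m-1}$.

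For (1), I will first check that $\mathcal{S}_{j_m}\in\mathcal{L}(j_m\infty)$, since it is a $K$-linear combination of the $a\otimes1$ with $\deg a\le j_m$. Next I will show that the coefficient of $a_m\otimes1$ in $\mathcal{S}_{j_m}$ is nonzero. Parametrize $a=a_m+\sum_{i<m}\lambda_ia_i$ with $\lambda\in\FF_q^m$. Then
$$\mathcal{S}_{j_m}=\sum_\lambda\Big(a_m\otimes1+\sum_i\lambda_ia_i\otimes1\Big)\Big(1\otimes\big(a_m+\textstyle\sum\lambda_ia_i\big)^{-1}\Big).$$
The coefficient of $a_m\otimes1$ is therefore $\sum_{x\in a_m+A(<j_m)}x^{-1}$. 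By the classical Carlitz/Goss formula for reciprocal sums over an affine $\FF_q$-lattice, this equals $1/e_{A(<j_m)}(a_m)$, where $e$ is the lattice exponential polynomial; since $a_m\notin A(<j_m)$, it is nonzero. A triangularity argument (induction on $d$, with $\mathcal{S}_{j_0}=1$) then gives that $(\mathcal{S}_{j_0},\ldots,\mathcal{S}_{j_d})$ is a basis. Alternatively, the same computation yields $\mathcal{S}_{j_m}=1\otimes e_m(a_m)^{-1}\cdot E_m$ with $E_m:=\prod_{\mu\in A(<j_m)}\big((a_m\otimes1-1\otimes\mu)\ldots\big)$-type expressions. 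Concretely, I expect
$$\mathcal{S}_{j_m}=\frac{e_m(a_m\otimes1)\big|_{\text{twisted}}}{e_m(a_m)},$$
with $e_m(z)=\prod_{\mu\in A(<j_m)}(z-\mu)$ viewed as an $\FF_q$-linear polynomial with coefficients in $K$, namely $e_m(z)=\sum_k\alpha_k z^{q^k}$, and where ``twisted'' means $\sum_k(1\otimes\alpha_k)(a^{q^k}\text{ in the }1\text{-slot})$. I will derive this closed form by expanding $\frac{a\otimes1}{a}$ using $\FF_q$-linearity in the first slot.

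For (2), I will use this closed form. Write $\mathcal{S}_{j_m}=\sum_\lambda\frac{(a_m+\sum\lambda_ia_i)\otimes1}{a_m+\sum\lambda_i a_i}$. Applying the Anderson twist raises the second-slot factors to the $q$-th power, so
$$\mathcal{S}_{j_m}^{(1)}=\sum_{a\text{ monic},\,\deg a=j_m}(a\otimes1)\,(1\otimes a^{-q}).$$
The key identity to aim for is
$$\sum_{a}\frac{a\otimes1}{a^{q}}=\Big(\sum_a a^{1-q}\Big)\cdot\sum_{b\text{ monic},\,\deg b\le j_m}\frac{b\otimes1}{b},$$
which I will prove by comparing the coefficients of each basis vector $a_i\otimes1$, $i\le m$. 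On the left, the coefficient of $a_i\otimes1$ ($i<m$) is $\sum_\lambda\lambda_i x_\lambda^{-q}$. On the right it is $P\cdot\sum_{\deg b=j_i}b^{-1}$ plus contributions from higher $b$, where $P:=\sum_a a^{1-q}$. These coefficient identities reduce to identities among reciprocal power sums over the affine lattices $a_m+A(<j_m)$. They follow from Carlitz's partial fraction expansion
$$\frac{1}{e_m(z)}=\sum_{\mu\in A(<j_m)}\frac{1}{z-\mu},$$
by differentiating or $q$-th powering together with $e_m(z)^q$-relations. Concretely, I will use that $\sum_{\mu}(z-\mu)^{-q}=e_m(z)^{-q}$ and $\sum_\mu \mu(z-\mu)^{-q}$ is expressible via $e_m$ and the lower $e_i$.

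The main obstacle is this coefficient bookkeeping in (2). The contributions of lower-degree $b$ with $\deg b=j_i$ must match the $\lambda_i$-weighted sums, and this requires the recursive relation $e_{m}(z)=e_{m-1}(z)^q-e_{m-1}(a_{m-1})^{q-1}e_{m-1}(z)$ and its generalization to Weierstrass-gapped lattices. I would first prove the identity in the Carlitz case as a sanity check, and then run an induction on $m$ using $\mathcal{S}_{\le j_m}=\mathcal{S}_{\le j_{m-1}}+\mathcal{S}_{j_m}$.
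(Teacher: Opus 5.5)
Your argument for (1) is fine. $\mathcal{S}_{j_m}$ lies in $A(\le j_m)\otimes K$, its coordinate on $a_m\otimes1$ (with $a_m$ monic) is $\sum_{\mu\in A(<j_m)}(a_m+\mu)^{-1}=1/e_{A(<j_m)}(a_m)\neq0$, and triangularity does the rest. The proposed closed form is not needed, and as written it is wrong. Reading it as $\sum_k a_m^{q^k}\otimes\alpha_k$ divided by $e_m(a_m)$, it has a pole of order $q^mj_m$ at $\infty$: for $A=\FF_q[\theta]$ and $m=1$ it gives $(t^q-t)/(\theta^q-\theta)$, whereas $\mathcal{S}_1=(t-\theta)/(\theta-\theta^q)$.

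The real problem is (2), which you do not prove. You reduce it to an identity of coordinates on $a_0\otimes1,\ldots,a_m\otimes1$ and then leave that identity as ``the main obstacle''. The tools you list do not reach it. For $i<m$ you would need the $\FF_q$-coordinate-weighted sums $\sum_a\lambda_i(a)a^{-q}$ on the left. On the right you would need $\sum_{k=i}^m\sum_{b\in a_k+A(<j_k)}\lambda_i(b)b^{-1}$, so every affine layer intervenes, not only $a_m+A(<j_m)$. The partial fraction expansion, its Hasse derivatives and its Frobenius twists only control sums weighted by the lattice elements themselves, such as $\sum_\mu\mu(z-\mu)^{-q}$. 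Passing from those to coordinate-weighted sums means inverting a Moore matrix, which your plan never does. Also, Weierstrass gaps play no role in the recursion for $e_m$: the $A(<j_m)$ form a complete flag with $A(<j_{m+1})=A(<j_m)\oplus\FF_q a_m$.

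The missing idea is to compare values rather than coordinates. Both sides of (2) lie in $\mathcal{L}(j_m\infty)=A(\le j_m)\otimes\CC_\infty$. Evaluation at $\Xi,\Xi^{(1)},\ldots,\Xi^{(m)}$, i.e. $\sum_i(a_i\otimes c_i)\mapsto\big(\sum_ic_ia_i^{q^k}\big)_{0\le k\le m}$, is injective there, because the Moore determinant $\det(a_i^{q^k})$ of the $\FF_q$-linearly independent $a_0,\ldots,a_m$ does not vanish. Now compare values.
\begin{itemize}
\item At $\Xi$, the left side is $P:=\sum_a a^{1-q}$. The right side is $P\cdot\#\{b\in A^+:\deg b\le j_m\}=P(1+q+\cdots+q^m)=P$.
\item For $1\le k\le m$, the left side is $\big(\sum_{a\in a_m+A(<j_m)}a^{q^{k-1}-1}\big)^q$. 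The right side is $-P\sum_{b\in A(\le j_m)}b^{q^k-1}$.
\end{itemize}
Both sides in the second case vanish by exactly the power-sum lemma \cite[Lemma 8.8.1]{GOS} that you announce in your first sentence but never use. The relevant $q$-digit sums are $(k-1)(q-1)$ and $k(q-1)$. These are smaller than $(q-1)\dim A(<j_m)=(q-1)m$ and $(q-1)\dim A(\le j_m)=(q-1)(m+1)$ respectively. This vanishing at twists of $\Xi$ is the same mechanism the paper cites from \cite[Lemma 2.1]{CHU&NGO&PEL} right after the proposition. It settles (2) in a few lines without any coefficient bookkeeping.
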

As a consequence, we derive a good understanding of the divisor of the rational functions $\mathcal{S}_{\leq j_m}$.
\begin{Lemma}\label{divisor-partial-sums}
For all $m$ large enough there exists a degree $g$ effective divisor $V^*_m$ on $X(\CC_\infty)\setminus\{0\}$ such that 
\begin{equation}\label{divisor-of-Sj}
\Big(\mathcal{S}_{\leq j_m}\Big)=(V^*_m)^{(1)}+\sum_{i=1}^m\Xi^{(i)}-j_m\infty.
\end{equation}
Moreover, $[V^*_m-g\infty]=v^{(m)}-v$ where $v$ is the point defined in \eqref{defi-v}. 
\end{Lemma}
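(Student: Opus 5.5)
We use Proposition \ref{proposition-new-bases} and induct on $m$ to locate the zeroes of $\mathcal{S}_{\leq j_m}$. The poles are immediate. Since $\mathcal{S}_{\leq j_m}\in A\otimes K$ and, by part (1), it has a nonzero component along $\mathcal{S}_{j_m}$ (which has exact pole order $j_m$ at $\infty$), its polar divisor is exactly $j_m\infty$. Its divisor of zeroes is therefore effective of degree $j_m$.

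The first step is to show that $\Xi^{(1)},\dots,\Xi^{(m)}$ are zeroes. Set $\lambda_m:=\sum_{a\text{ monic},\deg a=j_m}a^{1-q}$; for $m$ large, $\lambda_m\neq0$ because the sum is dominated by few terms in the $\infty$-adic valuation. Part (2) gives
$$\mathcal{S}_{\leq j_m}=\lambda_m^{-1}\mathcal{S}_{j_m}^{(1)}.$$
It is therefore enough to show that $\mathcal{S}_{j_m}$ vanishes at $\Xi,\Xi^{(1)},\dots,\Xi^{(m-1)}$, since the twist moves zeroes by $F$. The value of $\mathcal{S}_{j_m}^{(-k)}$ at $\Xi$ is the power sum
$$\sum_{\deg a=j_m,\ \text{monic}} a^{q^{k}}a^{-1}\in K.$$
For $k<m$ this power sum vanishes by the Goss--Thakur vanishing of sums over affine $\FF_q$-spaces, \cite[Lemma 8.8.1]{GOS}, as in Lemma \ref{lemma-partial-negative}. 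The point is that the exponent $q^k-1$ is small compared with $(q-1)\dim I(<j_m)$.

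An alternative is a direct induction using $\mathcal{S}_{\leq j_m}=\mathcal{S}_{\leq j_{m-1}}+\mathcal{S}_{j_m}$ together with the twisted relation above. Either way we obtain
$$\Div(\mathcal{S}_{\leq j_m})=E_m+\sum_{i=1}^m\Xi^{(i)}-j_m\infty$$
with $E_m$ effective of degree $j_m-m=g$ for $m$ large, since $j_m=m+g$ eventually. We set $V_m^*:=E_m^{(-1)}$. Multiplicity one of each $\Xi^{(i)}$ is not needed, provided $V^*_m$ is allowed to absorb any extra occurrence. Finally, $V^*_m$ avoids $\infty$ because the pole order is exactly $j_m$. (The "$\setminus\{0\}$" in the statement presumably means avoiding $\infty$.)

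For the linear-equivalence class we use that a principal divisor has trivial class in $\mathcal{A}$:
$$[(V_m^*)^{(1)}-g\infty]+\sum_{i=1}^m[\Xi^{(i)}-\infty]=0.$$
Writing $\xi=[\Xi-\infty]$, the first term equals $F[V_m^*-g\infty]$ because $\infty$ is $\FF_q$-rational, and the second equals $\sum_{i=1}^m\xi^{(i)}$. Since $v=\sum_{i\ge0}\xi^{(i)}$, this gives
$$\sum_{i=1}^m\xi^{(i)}=v^{(1)}-v^{(m+1)}.$$
Hence $F[V_m^*-g\infty]=v^{(m+1)}-v^{(1)}$, and applying $F^{-1}$ (a bijection on $\CC_\infty$-points) yields $[V^*_m-g\infty]=v^{(m)}-v$.

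The main obstacle is the vanishing step. One must check exactly which twisted power sums vanish, handling the Weierstrass gaps (where $I^+(d)$ is an affine space of dimension $m$ over $\FF_q$ rather than $j_m$) and the sign normalisation. I would first try the criterion that $\sum_{x\in a_0+W}x^{n}=0$ whenever $n<(q-1)\dim W$, applied with $\dim W=m$ and $n=q^k-1$, $k<m$.
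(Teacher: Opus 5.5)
Your outline matches the paper's proof: the exact pole order $j_m$ from Proposition \ref{proposition-new-bases}(1), vanishing of $\mathcal{S}_{j_m}$ at $\Xi,\dots,\Xi^{(m-1)}$, transfer to $\Xi^{(1)},\dots,\Xi^{(m)}$ through part (2), the degree count with $j_m=m+g$, and the computation in $\mathcal{A}$. You apply $F^{-1}$ at the end where the paper untwists the divisor first; this is the same.

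The gap is in the vanishing step, which carries the content. The monic elements of degree $j_m$ form $a_0+W$ with $\dim_{\FF_q}W=m$, and you need $\sum_{x\in a_0+W}x^{q^k-1}=0$ for every $0\le k\le m-1$. You propose the criterion $\sum_{x\in a_0+W}x^n=0$ whenever $n<(q-1)\dim W$. Applied to $n=q^k-1$ it requires $q^k<(q-1)m+1$, so it only covers $k$ up to about $\log_q m$. For $k$ near $m$ the exponent $q^k-1$ is exponentially larger than $(q-1)m$, not ``small compared with'' it. Used as stated, the criterion locates only $O(\log m)$ of the zeroes $\Xi^{(i)}$. The residual effective divisor would then have degree about $m$ rather than $g$, and the lemma would not follow.

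The missing idea is that in \cite[Lemma 8.8.1]{GOS} what counts is the number of $\FF_q$-linear forms, not the size of the exponent. Since $y\mapsto y^{q^i}$ is $\FF_q$-linear, for $y\in W$ you can write $(a_0+y)^{q^k-1}=\prod_{i=0}^{k-1}\bigl(a_0^{q^i}+y^{q^i}\bigr)^{q-1}$. Expanding, every monomial is a constant times a product of at most $k(q-1)$ linear forms on $W$; this is the base-$q$ digit sum of $q^k-1$. The sum over $W$ of a product of $t<(q-1)\dim W$ linear forms vanishes. This gives vanishing exactly when $k(q-1)<(q-1)m$, i.e.\ $k<m$, which is the required range. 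This digit-sum refinement is the ``variant'' of Lemma \ref{lemma-partial-negative} that the paper imports from \cite[Lemma 2.1]{CHU&NGO&PEL}.

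Two smaller repairs are needed.
\begin{enumerate}
\item Your reason for $\lambda_m\neq0$ is wrong: every term $a^{1-q}$ with $\deg a=j_m$ has the same absolute value $|\pi|^{j_m(q-1)}$, so none dominates. Instead, $\mathcal{S}_{j_m}\neq0$ by part (1), so part (2) forces $\lambda_m\neq0$.
\item You should note that the $\Xi^{(i)}$ are pairwise distinct, because $\Xi\notin X(\FF_q^{\text{alg}})$. This is what lets you subtract $\sum_{i=1}^m\Xi^{(i)}$ from the zero divisor.
\end{enumerate}
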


\begin{proof}
Observe that for all $m$, $\mathcal{S}_{j_m},\mathcal{S}_{\leq j_m}\in A\otimes_{\FF_q} K$. The order of the pole at infinity is exactly $j_m$ by the conjunction of the two parts of Proposition \ref{proposition-new-bases} and a variant of Lemma \ref{lemma-partial-negative} implies that for $m\geq1$, $\mathcal{S}_{j_m}$ also vanishes at the distinct points $\Xi,\ldots,\Xi^{(m-1)}$ (see \cite[Lemma 2.1]{CHU&NGO&PEL}). By the second part of Proposition \ref{proposition-new-bases} we therefore get that $\mathcal{S}_{\leq j_m}$ vanishes at the points
$\Xi^{(1)},\ldots,\Xi^{(m)}$ for all $m\geq1$. Since $j_m=m+g$ for $m$ large, this implies that there exists, for every $m$ large,
an effective divisor $V^*_m$ of degree $g$ such that \eqref{divisor-of-Sj} holds.
Compute, when $j_m=m+g$ and writing $v^*_m$ in place of $[V^*_m-g\infty]\in\mathcal{A}(\CC_\infty)$, for $m$ large,
$$0=\Big[V^*_m+\sum_{i=0}^{m-1}\Xi^{(i)}-j_m\infty\Big]=v^*_m+\Big[\sum_{i=0}^{m-1}\Xi^{(i)}-m\infty\Big]=v^*_m+\sum_{i=0}^{m-1}\xi^{(i)}=v^*_m+v-v^{(m)}.$$
We deduce $$v^*_m=v^{(m)}-v,\quad m\gg0.$$
\end{proof}
The topological space underlying \eqref{Jg} is complete and the jacobian map $[(\cdot)-g\infty]$ is continuous. Since $v,v^{(m)}\in\widehat{A}(\mathfrak{m}_{\CC_\infty})$ we get
$[V^*_m-g\infty]=v^{(m)}-v\rightarrow -v=[V^*-g\infty]$ as $m\rightarrow\infty$. Therefore, over a strictly increasing subsequence $(m_i)_i$ and in the {\em compact topology} of $(X_{\CC_\infty}^g/\mathfrak{S}_g)(L)$ for a suitably chosen finite extension $L/K_\infty$ in the sense of Ferraro (see \cite[Definition 3.3]{FER1}),
\begin{equation}\label{limit-V-star-m}
\lim_{i\rightarrow\infty}V^*_{m_i}=V^*,
\end{equation}
where $V^*$ is one of the $h_A$ divisors of Lemma \ref{lemma-dual-shtuka}. We do not review the notion of compact topology over $(X_{\CC_\infty}^g/\mathfrak{S}_g)(L)$ but note that, by the fact that $X_{\CC_\infty}^g/\mathfrak{S}_g$ is proper, identifying
$$(X_{\CC_\infty}^g/\mathfrak{S}_g)(L)\cong\varprojlim(X_{\CC_\infty}^g/\mathfrak{S}_g)(\mathcal{O}_L/\mathfrak{m}_L^n),$$
a metric arises over $(X_{\CC_\infty}^g/\mathfrak{S}_g)(L)$, where $\mathcal{O}_L$ and $\mathfrak{m}_L$ are, respectively,
the valuation ring and the maximal ideal associated to $L$.

Now we go back to our partial zeta function
$Z_{X,A}$. Let $P$ be any point in the support of $V^*$ (so we assume that $g>0$). By Lemma \ref{partial-zeta-are-entire} we can extend $Z_{X,A}$ analytically over $X(\CC_\infty)\setminus\{\infty\}$. Let us look at its evaluation at $P$ and $P_i$. Up to selection of a subsequence we can choose, 
for all $i$ a point $P_i$ in the support of $V^*_{m_i}$ with the property that $P_i\rightarrow P$ (compact topology).
For any such $i$:
$$Z_{X,A}(P)=\underbrace{Z_{X,A}(P)-Z_{X,A}(P_i)}_{T_1(i)}+\underbrace{Z_{X,A}(P_i)-\mathcal{S}_{\leq j_{m_i}}(P_{i})}_{T_2(i)}+\underbrace{\mathcal{S}_{\leq j_{m_i}}(P_{i})}_{T_3(i)}.$$ As $i\rightarrow\infty$, $T_1(i)\rightarrow0$ because $P_i\rightarrow P$ and $Z_{X,A}$ is entire, hence continuous. We also have $T_2(i)\rightarrow0$ because for all $\rho\in|\CC_\infty^\times|$,
$\|Z_{X,A}-\mathcal{S}_{\leq j_{m_i}}\|_\rho\rightarrow0$ and for all $i$ big, we can find $\rho$ such that $P_i\in\boldsymbol{D}_X(\rho)$ (this follows easily from the proof of \cite[Lemma 1.1]{CHU&NGO&PEL})
and finally, $T_3(i)=0$ for all $i$ large by the choice of $P_i$. We deduce that $Z_{X,A}(P)$ vanishes. Hence we get the following partial portrait of $Z_{X,A}$:

\begin{Lemma}
The entire function $Z_{X,A}$ can be identified with an element of $\TT_A^\times$ and vanishes at the points of one of the $h_A$ divisors $V^*$, and on all the points 
$\Xi^{(1)},\Xi^{(2)},\ldots$.
\end{Lemma}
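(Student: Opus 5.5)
The lemma has three parts: $Z_{X,A}\in\TT_A^\times$, vanishing on the support of $V^*$, and vanishing at $\Xi^{(1)},\Xi^{(2)},\ldots$. The second is exactly what the preceding argument ($Z_{X,A}(P)=T_1(i)+T_2(i)+T_3(i)\to0$) establishes, so I will only need to record that $V^*$ is one of the $h_A$ dual shtuka divisors of Lemma \ref{lemma-dual-shtuka}, via \eqref{limit-V-star-m}. The remaining work is the invertibility statement and the trivial zeroes.

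For invertibility in $\TT_A$, I will use that an element of $\TT_A$ is a unit if and only if it does not vanish on $\boldsymbol{D}_X$. Equivalently, I will show that $Z_{X,A}$ is a small perturbation of $1$ in the Gauss norm. Write $Z_{X,A}=1+\sum_{m\geq1}\mathcal{S}_{j_m}$. Each $\mathcal{S}_{j_m}$ is a finite sum of terms $a\otimes a^{-1}$ with $a$ monic of degree $j_m\geq1$, hence has Gauss norm at most $\max|a^{-1}|=c^{-j_m}<1$. Since $\|\cdot\|$ is ultrametric, this gives $\|Z_{X,A}-1\|<1$. Then $Z_{X,A}^{-1}=\sum_{k\geq0}(1-Z_{X,A})^k$ converges in $\TT_A$, and in particular $Z_{X,A}$ has no zero on $\boldsymbol{D}_X$. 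This also confirms that the zeroes found above, and the points $\Xi^{(i)}$, all lie outside the affinoid $\boldsymbol{D}_X$.

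For the trivial zeroes, I will use Lemma \ref{divisor-partial-sums}: for $m$ large, $\mathcal{S}_{\leq j_m}$ vanishes at $\Xi^{(1)},\ldots,\Xi^{(m)}$. Fix $i\geq1$. Then $\mathcal{S}_{\leq j_m}(\Xi^{(i)})=0$ for all $m\geq i$ large. Since $\Xi^{(i)}$ is a fixed point (no limiting process is needed here), $\Xi^{(i)}\in\boldsymbol{D}_X(\rho)$ for some fixed $\rho$. Using the convergence $\|Z_{X,A}-\mathcal{S}_{\leq j_m}\|_\rho\to0$, the same fact employed for $T_2(i)$, I get $Z_{X,A}(\Xi^{(i)})=\lim_m\mathcal{S}_{\leq j_m}(\Xi^{(i)})=0$.

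Alternatively, I can obtain the trivial zeroes from Theorem \ref{Ferraro-theorem} and Corollary \ref{meromorphic-omega}: $\omega^{(1)}$ has simple poles at $\Xi^{(i)}$ for $i\geq1$, while $\widetilde{\pi}\delta^{(1)}$ has no pole there. The first argument is preferable because it avoids that theorem. The main (mild) obstacle is justifying the locally uniform convergence $\|Z_{X,A}-\mathcal{S}_{\leq j_m}\|_\rho\to0$ on every $\boldsymbol{D}_X(\rho)$. I will take this from the proof of Lemma \ref{partial-zeta-are-entire} (via \cite[Lemma 1.1]{CHU&NGO&PEL}), where the sums $\mathcal{S}_{j_m}$ decay faster than any $\rho^{-j_m}$ growth.
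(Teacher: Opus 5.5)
Your proposal is correct and follows the paper's route: vanishing on the support of $V^*$ is exactly the $T_1(i)+T_2(i)+T_3(i)$ argument, and the trivial zeroes come from Lemma \ref{divisor-partial-sums} via the same $\|\cdot\|_\rho$-convergence used for $T_2(i)$, which is simpler here because $\Xi^{(i)}$ is a fixed point. The paper takes membership in $\TT_A^\times$ for granted; you supply it with the observation $\|Z_{X,A}-1\|\leq c^{-j_1}<1$, which is the natural justification.
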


Of course, this does not rule out the existence of other zeroes, or higher multiplicities but it turns out that this is the precise description of the zeroes of $Z_{X,A}$, see Corollary \ref{Corollary-precise-divisors}. Note that if $I\neq A$, $Z_{X,I}$ needs not to belong to $\TT_A^\times$. This more general case is discussed in detail in \cite{FER1}.

\subsection{Sketch of proof of Ferraro's Theorem} It is time to outline a proof of Theorem \ref{Ferraro-theorem} (full details can be found in \cite{FER1}, here we only care about the essential steps and a simplified statement). Recall \eqref{divisor-of-Sj}, choose a shtuka function $f$ satisfying \eqref{shtuka-function} and consider $\delta$ as in Lemma \ref{corollary-Weil}. These functions are defined up to the choice of a proportionality factor in $\CC_\infty^\times$ we will be a little bit indeterminate about these choices that can be made at the final step.
For $m$ large, writing $$h_m:=\frac{\mathcal{S}_{\leq j_{m}}\delta^{(m+1)}}{f^{(1)}\cdots f^{(m)}}$$ we have, by a simple computation using \eqref{divisor-partial-sums},
\begin{equation}
(h_m)={V^*}^{(1)}_m+V^{(1)}+{V^*}^{(m+1)}-3g\infty.
\end{equation}
Therefore $h_m\in A(\leq 3g)\otimes_{\FF_q}\CC_\infty$ for all $m$. 

We now use results that Ferraro developed to control convergence of sequences of rational functions under the hypothesis of convergence of the associated sequences of divisors for the compact topology; we do not review them here but we discuss the useful consequences. We can apply his Proposition 3.20 by setting, in his notation,
$D_:=3g\infty$, $d=3g$. For the compact topology, there is a strictly increasing subsequence $m_i$ with the property that
${V^*}^{(1)}_{m_i}+V^{(1)}+{V^*}^{(m_i+1)}\rightarrow {V^*}^{(1)}+V^{(1)}+W$ in $X^{3g}/\mathfrak{S}_{3g}$ where $W$ is the
effective divisor of degree $g$ in $(X^{g}/\mathfrak{S}_{g})(\FF_q)$ such that ${V^*}^{(m_i+1)}\rightarrow W$ (note that
${v^*}^{(m_i)}\rightarrow0$ so $[W-g\infty]=0$). In the notation of \cite[Proposition 3.20]{FER1}, we can set
$D_+={V^*}^{(1)}+V^{(1)}+W$. The first part of this result implies that there exists a sequence of elements $\lambda_{i}\in\CC_\infty^\times$ such that $\kappa_i:=\lambda_ih_{n_i}\rightarrow \kappa$ with $\kappa\in\mathcal{L}(3g\infty)$
with respect of any norm on $\mathcal{L}(3g\infty)$ (they are all equivalent as this is a $\CC_\infty$-vector space of finite dimension); we take the Gauss norm on $\mathcal{L}(3g\infty)=A(\leq 3g)\otimes_{\FF_q}\CC_\infty$ after a choice of a basis of $A(\leq 3g)$ that we extend to a basis of $A$. 

Note that
 $\kappa$ is such that
$$\Div(\kappa)={V^*}^{(1)}+V^{(1)}+W-3g\infty.$$
By Lemma \ref{tau-difference-equation}, we have thus obtained, along an increasing sequence $m_i$, 
$$\lambda_i\omega^{(1)}\mathcal{S}_{\leq j_{m_i}}\delta^{(m_i+1)}=\kappa_i\omega^{(m_i+1)},$$
and $\kappa_i\rightarrow\kappa$ for the chosen Gauss norm over $\mathcal{L}(3g\infty)$. We know that $\mathcal{S}_{\leq j_{m_i}}$ converges to $Z_{X,A}$ for the Gauss norm of $\TT_A$. Also $\omega^{(j)},\delta^{(j)}$ are elements of $\TT_A$ for all $j$. In particular there exists
$\alpha\in L_\infty^\times$ such that $(\alpha\omega)^{(m_i+1)}\rightarrow\overline{\omega}\in A\otimes_{\FF_q}\FF_q^{\text{alg}}$ (upon extraction of a strictly increasing subsequence of $m_i$).
For a similar reason, by the fact that $\delta\in A(\leq 2g)\otimes_{\FF_q}\CC_\infty$ we can find an element $\beta\in\CC_\infty^\times$ such that, upon extraction of a subsequence, $(\beta\delta)^{(m_i+1)}\rightarrow\overline{\delta}\in A\otimes_{\FF_q}\FF_q^{\text{alg}}$. We deduce that, along a subsequence, $\lambda_i\big(\frac{\alpha}{\beta}\big)^{(m_i+1)}$
tends to an element $\lambda\in\CC_\infty^\times$, and we reach the equality in $\TT_A$, which can be seen already as a functional identity (the right-hand side is a rational function):
$$
\omega^{(1)}Z_{X,A}=\lambda^{-1}\frac{\overline{\omega}}{\overline{\delta}}\kappa.
$$
We come back to the divisor of $\kappa$. Recall that $[W-g\infty]=0=\lim_i{V^*}^{(m_i+1)}$. Hence there exists a rational function $\gamma$ in $A\otimes_{\FF_q}\CC_\infty$
such that $(\gamma)=W-g\infty$. But we can choose $\gamma\in A\otimes_{\FF_q}\FF_q^{\text{alg}}$. So we find that
$$\kappa=\gamma\hat{\kappa}$$
where $\hat{\kappa}$ is a rational function such that $\Div(\hat{\kappa})={V^*}^{(1)}+V^{(1)}-2g\infty$. By Lemma \ref{corollary-Weil},
$\hat{\kappa}$ and $\delta^{(1)}$ are proportional.
With this we can write:
\begin{equation}\label{partial-functional-identity}
\omega^{(1)}Z_{X,A}=\mu^{-1}\overline{\eta}\delta^{(1)},
\end{equation}
for $\mu\in\CC_\infty^\times$ and $\overline{\eta}\in\operatorname{Frac}(A\otimes_{\FF_q}\FF_q^{\text{alg}})$.
Now recall that $Z_{X,A}\in\TT_A^\times$. Hence, it does not vanish in $\boldsymbol{D}_X$. Since $X(\FF_q^{\text{alg}})\setminus\{\infty\}\subset\boldsymbol{D}_X$, it does not vanish on $X(\FF_q^{\text{alg}})\setminus\{\infty\}$ neither.
By \cite[Proposition 4.23]{FER1}, the support of $V$ does not meet $X(\FF_q^{\text{alg}})\setminus\{\infty\}$.
Hence the set of zeroes and poles of $\omega$ in $X(\FF_q^{\text{alg}})\setminus\{\infty\}$ determines a divisor $D$ defined over $\FF_q$. By using \cite[Proposition 4.16]{FER1} we can find $s\in\ZZ$ such that $D+s\infty=\Div(\nu)$ with $\nu\in \operatorname{Frac}(A\otimes1)$ rational over $X$. Since $\nu^{(1)}=\nu$, we can choose $\omega$ without zeroes on 
$X(\FF_q^{\text{alg}})\setminus\{\infty\}$ and with this choice, in the formula \eqref{partial-functional-identity} we see that
$\overline{\eta}$ can be replaced with $1$ upon choosing appropriate normalizations. 

With a little additional work the reader can verify, using the formula \eqref{functional-identity-Ferraro}, the following complete description: 
\begin{Corollary}\label{Corollary-precise-divisors}
The function $Z_{X,A}$, away from $\infty$, has the infinite support divisor $${V^*}^{(1)}+\sum_{i>0}\Xi^{(i)}$$ and there exists a special function $\omega\in\mathfrak{Sf}(\phi)$ that has, away from $\infty$, infinite support divisor $${V}-\sum_{i\geq 0}\Xi^{(i)}.$$
\end{Corollary}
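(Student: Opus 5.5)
The plan is to compute divisors on both sides of the functional identity \eqref{functional-identity-Ferraro}, $\omega^{(1)}Z_{X,A}=\widetilde{\pi}\delta^{(1)}$, restricted to $X(\CC_\infty)\setminus\{\infty\}$. We take $\omega$ normalized as at the end of the sketch of proof of Theorem \ref{Ferraro-theorem}, so that it has no zeroes or poles on $X(\FF_q^{\text{alg}})\setminus\{\infty\}$.

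We first determine the divisor of $\omega$ away from $\infty$. Corollary \ref{meromorphic-omega} gives simple poles at $\Xi^{(i)}$, $i\geq0$. To capture everything, we iterate \eqref{tau-difference-equation}:
\[
\omega=\frac{\omega^{(n)}}{ff^{(1)}\cdots f^{(n-1)}} .
\]
By \eqref{interated-shtuka}, away from $\infty$ the denominator has divisor $V^{(n)}-V+\sum_{j=0}^{n-1}\Xi^{(j)}$. The numerator $\omega^{(n)}$ is analytic on $\boldsymbol{D}_X(\rho^{q^n})$. Its zeroes there are obtained by applying the twist to the zeroes of $\omega$ on the smaller domain $\boldsymbol{D}_X(\rho)$.

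The key point is that $\omega\in\TT_A$ can have zeroes in $\boldsymbol{D}_X$ only at points whose orbit under the twist stays there. So we would argue that on $\boldsymbol{D}_X(\rho)$ the zeroes of $\omega$ are exactly those of $V$. The points of $\operatorname{Supp}(V)$ are not in $X(\FF_q^{\text{alg}})$ by \cite[Proposition 4.23]{FER1}; their twists $V^{(n)}$ tend to the reduction and lie in $\boldsymbol{D}_X$. This gives $\Div(\omega)=V-\sum_{i\geq0}\Xi^{(i)}$ away from $\infty$.

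A cleaner route, which we would actually follow, is to let the identity do this bookkeeping. From Lemma \ref{corollary-Weil}, $\Div(\delta^{(1)})=V^{(1)}+{V^*}^{(1)}-2g\infty$. By the last Lemma before the sketch of proof, $Z_{X,A}$ vanishes at least on ${V^*}^{(1)}$ and on the $\Xi^{(i)}$, $i>0$; here we identify the divisor $V^*$ of that lemma with ${V^*}^{(1)}$ after matching conventions with \eqref{limit-V-star-m} and \eqref{divisor-of-Sj}. Also $\omega^{(1)}$ has simple poles exactly at $\Xi^{(i)}$, $i\geq1$, since these are the twists of the poles $\Xi^{(i)}$, $i\ge0$, of $\omega$. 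Comparing both sides of \eqref{functional-identity-Ferraro}, the product $\omega^{(1)}Z_{X,A}$ has finite divisor $V^{(1)}+{V^*}^{(1)}$ away from $\infty$. Hence
\[
\Div(Z_{X,A})+\Div(\omega^{(1)})=V^{(1)}+{V^*}^{(1)}.
\]

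The main obstacle is splitting this sum correctly, i.e.\ excluding extra zeroes of $Z_{X,A}$ compensated by extra poles of $\omega^{(1)}$, or higher multiplicities. For this we use the flag argument. Suppose $\omega$ had a pole at some $P\neq\Xi^{(i)}$. The relation $\omega=\omega^{(n)}/(f\cdots f^{(n-1)})$ transports poles forward: $\omega^{(n)}$ is holomorphic on $\boldsymbol{D}_X(\rho^{q^n})$, so a pole of $\omega$ at $P$ forces $P$ to lie in the support of $V^{(n)}+\sum_j\Xi^{(j)}$ for all large $n$. Since the $V^{(n)}$ move toward $X(\FF_q^{\text{alg}})$, this is impossible unless $P=\Xi^{(j)}$. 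The same relation yields the simple-pole statement, since each $\Xi^{(j)}$ appears with multiplicity one.

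Zeroes of $\omega$ on $\boldsymbol{D}_X(\rho)$ are handled by the same relation: they match zeroes of $V$ up to points in $\operatorname{Supp}V^{(n)}$ that escape to $X(\FF_q^{\text{alg}})$, where by normalization $\omega$ does not vanish. Hence $\Div(\omega)=V-\sum_{i\ge0}\Xi^{(i)}$ and $\Div(\omega^{(1)})=V^{(1)}-\sum_{i\geq1}\Xi^{(i)}$. Subtracting from the displayed identity gives $\Div(Z_{X,A})={V^*}^{(1)}+\sum_{i>0}\Xi^{(i)}$.
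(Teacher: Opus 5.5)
Your strategy is the one the paper intends: pin down $\Div(\omega)$ from $\omega^{(1)}=f\omega$ and the normalization on $X(\FF_q^{\text{alg}})\setminus\{\infty\}$, then subtract in $\omega^{(1)}Z_{X,A}=\widetilde{\pi}\delta^{(1)}$. Your pole analysis and your final subtraction are fine. The gap is the zero analysis of $\omega$.

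Write $m_P(D)$ for the multiplicity of $P$ in a divisor $D$. The relation $\omega=\omega^{(n)}/(f\cdots f^{(n-1)})$ gives
\[ \operatorname{ord}_P\omega=\operatorname{ord}_{P^{(-n)}}\omega+m_P(V)-m_P(V^{(n)})-\sum_{j=0}^{n-1}m_P(\Xi^{(j)}), \]
so it only transports a hypothetical extra zero at $P$ to one at $P^{(-n)}$. Your sentence on zeros disposes of the $V^{(n)}$ term but drops the zeros of the numerator $\omega^{(n)}$, which are exactly what must be controlled. Holomorphy of $\omega^{(n)}$, which settled the poles, says nothing here.

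The heuristic ``escape to $X(\FF_q^{\text{alg}})$, where $\omega$ does not vanish'' also fails where it matters. Since $[V-g\infty]=v$ reduces to $0$, the reduction of $V$ is an effective divisor linearly equivalent to $g\infty$. It therefore equals $g\infty$ whenever $h^0(g\infty)=1$ (e.g.\ $g=1$). In that case $\operatorname{Supp}V$ lies in the residue disc of $\infty$, outside $\boldsymbol{D}_X$, contrary to your first paragraph. An extra zero of $\omega$ in that disc propagates along its twist orbit, forwards to $\infty$ and backwards towards the boundary of $\boldsymbol{D}_X$. It never approaches $X(\FF_q^{\text{alg}})\setminus\{\infty\}$, so the normalization cannot exclude it.

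What is missing is twist-invariance combined with finiteness on affinoids. Set $E:=\Div(\omega)-V+\sum_{i\geq0}\Xi^{(i)}$ on $X(\CC_\infty)\setminus\{\infty\}$. From $\Div(f)=V^{(1)}-V+\Xi-\infty$ and $\omega^{(1)}=f\omega$ one gets $E^{(1)}=E$.

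Now take $P\notin X(\FF_q^{\text{alg}})$. The points $P^{(-n)}$, $n\geq0$, are pairwise distinct, and all lie in $\boldsymbol{D}_X(\rho)$ for any $\rho\geq1$ with $P\in\boldsymbol{D}_X(\rho)$. On that affinoid $\omega=\omega^{(m)}/(f\cdots f^{(m-1)})$ for $m$ large, with $\omega^{(m)}$ analytic and non-zero. So $\omega$ has only finitely many zeros and poles there, and $E(P)=E(P^{(-n)})=0$ for $n\gg0$.

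Hence $E$ is supported on $X(\FF_q^{\text{alg}})\setminus\{\infty\}$. That set is avoided by $V$ and the $\Xi^{(i)}$, and there $\omega\in\TT_A$ has no poles and, by the normalization, no zeros. So $E=0$, i.e.\ $\Div(\omega)=V-\sum_{i\geq0}\Xi^{(i)}$. This treats zeros and poles at once, and automatically accounts for any coincidences between $\operatorname{Supp}V$ and the $\Xi^{(i)}$.

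Alternatively, since $Z_{X,A}$ is entire, the identity itself gives $\Div(\omega^{(1)})\leq V^{(1)}+{V^*}^{(1)}$. This is a finite bound on the zeros of $\omega$, and together with $E^{(1)}=E$ and your pole argument it also closes the gap. Your subtraction then yields $\Div(Z_{X,A})={V^*}^{(1)}+\sum_{i>0}\Xi^{(i)}$, without needing the earlier lemma on the zeros of $Z_{X,A}$ or its twist conventions.
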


We end here our overview on Ferraro's proof of his Theorem \ref{Ferraro-theorem}. We omit recognizing that the 
right hand side of \eqref{partial-functional-identity} belongs to $\widetilde{\pi}(A(\leq 2g)\otimes_{\FF_q} H)$. But we gave an idea of how the functional identity he proved is connected with the geometric structure of the curve $X$. As a byproduct, 
Theorem \ref{Ferraro-theorem} delivers fragments of Goss' Theorem \ref{Goss-formula-Euler}. It contains a special case of 
\eqref{partial-result-of-goss} (\footnote{The theory presented in this paper can fully cover the above theorem of Goss but the price to pay is to introduce several variables generalizations of the functions we are using, see \cite{ANG&PEL} and \cite{ANG&NGO&TAV1}.}):

\begin{Corollary}
For all $k\geq1$ we have 
$$Z_{A}(q^k-1)\in H\widetilde{\pi}^{q^k-1}.$$
\end{Corollary}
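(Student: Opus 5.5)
We evaluate both sides of the functional identity \eqref{functional-identity-Ferraro} of Theorem \ref{Ferraro-theorem} at the points $\Xi^{(k)}$, $k\geq 1$. The starting observation is that $\Xi^{(k)}$ is the point $\chi$ with $\chi(a)=a^{q^k}$. Since $Z_{X,A}\in\TT_A$ and $\Xi^{(k)}\in\boldsymbol{D}_X$ is false in general, we instead use the entire extension given by Lemma \ref{partial-zeta-are-entire}. Formally, then,
$$Z_{X,A}(\Xi^{(k)})=\sum_{a\in A^+}\frac{a^{q^k}}{a}=\sum_{a\in A^+}a^{q^k-1},$$
and this is the value $Z_A(1-q^k)$, a ``negative'' value, which vanishes (trivial zero). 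So evaluating $Z_{X,A}$ itself is the wrong move, and we must instead evaluate a twist.

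We apply the inverse Anderson twist $k$ times to the identity. Since $\omega^{(1)}=f\omega$, this gives
$$\omega^{(1-k)}Z_{X,A}^{(-k)}=\widetilde{\pi}^{q^{-k}}\delta^{(1-k)}.$$
Now $Z_{X,A}^{(-k)}=\sum_{a\in A^+} a\otimes a^{-q^{-k}}$. Evaluating this at $\Xi$, i.e.\ specializing $a\otimes c\mapsto ac$, would give $\sum a^{1-q^{-k}}$, which is not what we want. The cleaner route is to evaluate the twisted identity at $\Xi^{(-k)}$ and then raise to the $q^k$-th power, which amounts to evaluating the original identity at a point $\chi$ with $\chi(a)^{q^k}$ behaving like $a$. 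Let $\Xi^{(-k)}$ be the point with $\Xi^{(-k)}(a)=a^{1/q^k}$. It lies in the disc where the series converges, since its norm is bounded by that of $\Xi$ raised to the power $q^{-k}$ and the series $\sum a(\chi)/a$ converges there. Then
$$Z_{X,A}(\Xi^{(-k)})=\sum_{a\in A^+}a^{q^{-k}-1}=\Big(\sum_{a\in A^+}a^{1-q^k}\Big)^{q^{-k}}=Z_A(q^k-1)^{q^{-k}}.$$
So the plan is to evaluate \eqref{functional-identity-Ferraro} at $\Xi^{(-k)}$ and raise the result to the $q^k$-th power.

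On the right-hand side this produces $\widetilde{\pi}^{q^k}\,\delta^{(1)}(\Xi^{(-k)})^{q^k}$. Because $\delta^{(1)}(\Xi^{(-k)})^{q^k}=\delta^{(k+1)}(\Xi)$, and $\delta\in A\otimes H$ with $H$ stable under Frobenius, this factor lies in $H$. On the left we need $\omega^{(1)}(\Xi^{(-k)})^{q^k}=\omega^{(k+1)}(\Xi)$. Using $\omega^{(k+1)}=ff^{(1)}\cdots f^{(k)}\omega$, we can write this as $\big(ff^{(1)}\cdots f^{(k)}\omega\big)(\Xi)$, in which $f$ vanishes at $\Xi$ and $\omega$ has a simple pole there (Corollary \ref{meromorphic-omega}). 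It is therefore the residue-type quantity $\big(f\omega\big)(\Xi)\cdot\prod_{i=1}^kf^{(i)}(\Xi)$. The factors $f^{(i)}(\Xi)$ are in $H$, since $f\in\operatorname{Frac}(A\otimes H)$ and has neither zero nor pole at $\Xi$ for $i\geq1$, because $\Xi$ is not in the support of $V^{(i)}$ or $V^{(i+1)}$. The remaining factor $(f\omega)(\Xi)=\omega^{(1)}(\Xi)$ must be compared with $\widetilde{\pi}$; concretely, the claim is that $\omega^{(1)}(\Xi)\in H^\times\widetilde{\pi}$. This is consistent with the Carlitz example, where $\omega^{(1)}(\theta)=(-\theta)^{1/(q-1)}\prod_{i\geq1}(1-\theta/\theta^{q^i})^{-1}=\widetilde{\pi}/\theta$.

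Putting these together yields $Z_A(q^k-1)\in H\,\widetilde{\pi}^{q^k}/\widetilde{\pi}=H\widetilde{\pi}^{q^k-1}$. The nonvanishing of $\omega^{(1)}(\Xi)$ needed for this division follows by evaluating the identity at $k=0$, which gives $\omega^{(1)}(\Xi)Z_A(1)\cdot$ stuff. The main obstacle is justifying $\omega^{(1)}(\Xi)\in H^\times\widetilde{\pi}$. For this I would use the normalization of $\omega$ fixed in Theorem \ref{Ferraro-theorem} together with the standard link between special functions and the period lattice $\widetilde{\pi}A$ of the sign-normalized Drinfeld module $\phi$: the residue of $\omega$ at $\Xi$ is, up to a factor in $H$, a generator of the period lattice, as in \cite{ANG&NGO&TAV1}. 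A secondary obstacle is checking convergence of the evaluation at $\Xi^{(-k)}$ and the interchange of twist with evaluation; both are routine from the entireness given by Lemma \ref{partial-zeta-are-entire}.
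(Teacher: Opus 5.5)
This is essentially the paper's proof: evaluating \eqref{functional-identity-Ferraro} at $\Xi^{(-k)}$ and raising to the $q^k$-th power is the same as applying $\tau^k$ and evaluating at $\Xi$ (since $h^{(k)}(\Xi)=h(\Xi^{(-k)})^{q^k}$), and both arguments then rest on $(f\omega)(\Xi)\in H^\times\widetilde{\pi}$, which the paper also imports from the literature (Gazda--Maurischat) rather than proving. Your $k=0$ remark is wrong as stated and worth redoing: since $Z_{X,A}(\Xi)=Z_A(0)=1$ there is no $Z_A(1)$ term, and the evaluation gives $\omega^{(1)}(\Xi)=\widetilde{\pi}\,\delta^{(1)}(\Xi)\in H\widetilde{\pi}$ (in the Carlitz case $\omega^{(1)}(\theta)=-\widetilde{\pi}$, not $\widetilde{\pi}/\theta$); moreover, dividing the $\tau^k$-twisted identity by the original one gives $Z_{X,A}^{(k)}=\widetilde{\pi}^{q^k-1}\frac{\delta^{(k+1)}}{\delta^{(1)}f^{(1)}\cdots f^{(k)}}\,Z_{X,A}$, whose rational factor over $H$ is forced to be regular at the $H$-rational point $\Xi$, so evaluating at $\Xi$ proves the corollary with no external input and without your unproved claim that each $f^{(i)}(\Xi)$ is finite and nonzero.
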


\begin{proof}
The function $f\omega$ has no pole and no zero at $\Xi$.
From \cite[Theorem 5.8]{GAZ&MAU2} we deduce that, choosing $f\in\operatorname{Frac}(A\otimes H)$, 
$$(f\omega)(\Xi)\in H^\times\widetilde{\pi}.$$ Now apply the $k$-th power $\tau^k$ of Anderson's twist $\tau$ to both sides 
of \eqref{functional-identity-Ferraro}. We get:
$$Z^{(k)}_{X,A}=\widetilde{\pi}^{q^k}\frac{\delta^{(1+k)}}{f\cdots f^{(k)}\omega}.$$
We conclude evaluating at $\Xi$, because $Z^{(k)}_{X,A}(\Xi)=Z_A(q^k-1)$.
\end{proof}

In fact, the arguments can be reversed: assuming Goss' Theorem \ref{Goss-formula-Euler} we deduce that 
$$\frac{Z_{A}(q^k-1)}{\widetilde{\pi}^{q^k-1}}\in H$$ for all $k>0$. With good normalizations we deduce that 
$\delta^{(k)}(\Xi)\in H$ for all $k>0$. This implies that we can find $\delta\in A\otimes_{\FF_q} H$.

\section{Dedekind-like Zeta functions over curves}\label{Dedekind}

The construction of the functions $Z_{X,I}$ discussed in the previous section follow Goss' path, when he constructed the functions $Z_I$ in \S \ref{Goss-partial-zeta}.
We do not know if we can construct, analogously, a zeta function of the type of Goss' $\zeta_A$ in \eqref{definition-Goss-zeta}, over $X(\CC_\infty)\setminus\{\infty\}$. But we can at least construct relative partial zeta functions and a Dedekind-like such a function $\zeta_{E,A}$ corresponding to a finite separable field extension $E/K$, provided that it contains the Hilbert class field $H$ of $K$. Note that these are the settings of the paper of Angl\`es, Ngo Dac and Tavares Ribeiro \cite{ANG&NGO&TAV}. These functions first appeared in ibid. 

We suppose that a triple $(X,\infty,\sgn)$ is given. 
We recall that the Hilbert class field $H$ of $A$ is the maximal abelian extension of $K$ unramified at all the primes of $A$ and is totally split at $\infty$, which means that, identifying $H$ with a subfield of $\CC_\infty$, we have $H\subset K_\infty$, and moreover, $H\otimes_{K}K_\infty$ is a $K_\infty$-vector space of dimension $[H:K]$ which is the order $h_A$ of the class group of $K$. We know that Artin's symbol provides 
a group homomorphism $\sigma:\mathcal{I}_A\rightarrow G:=\operatorname{Gal}(H/K)$. If $I$ is an invertible ideal of $A$, we write $\sigma_I$ for its image in $G$. 
 
Let $M$ be a finite $A$-module. It is isomorphic to a finite direct sum $\oplus_i A/I_i$ for ideals $I_i$ of $A$. We write $$[M]_A=\prod_iI_i\in\mathcal{I}_A.$$ This defines an exact function from finite $A$-modules to $\mathcal{I}_A$.
We return to our field $E$ which is a finite separable extension of $H$, and denote by $B$ the integral closure of $A$ in $E$.
For $\mathfrak{I}$ an ideal of $B$, $B/\mathfrak{I}$ is a finite $A$-module so that $[B/\mathfrak{I}]_A$ is well defined as an ideal of $A$.
It is a principal ideal of $A$. Indeed, factorizing in $\mathcal{I}_B$, it suffices to see that if $\mathfrak{P}$
is a maximal ideal of $B$, then $[B/\mathfrak{P}]_A$ is principal. This is because it is equal to $$N_{E/K}(\mathfrak{P})=\mathfrak{p}^{[B/\mathfrak{P}:A/\mathfrak{p}]}$$ where $\mathfrak{p}$ is the maximal ideal of $A$ such that $\mathfrak{P}\mid\mathfrak{p}$. Now,  
$\mathfrak{p}^{[B/\mathfrak{P}:A/\mathfrak{p}]}$ is a principal ideal 
by a classical property of Hilbert class fields, as $E$ is an extension of $H$ by hypothesis, see \cite{ANG&NGO&TAV}. So we denote,
for $\mathfrak{I}$ a non-zero ideal of $B$,
$[B/\mathfrak{I}]_A^+\in A^+$ to be the unique monic generator of $[B/\mathfrak{I}]_A$. This extends to a group homomorphism $\mathcal{I}_B\rightarrow K^\times\cap\operatorname{Ker}(\sgn)$.
Define, following \cite[\S 4.2]{ANG&NGO&TAV},
$$\zeta_{E,A}(\chi):=\sum_{\begin{smallmatrix} \mathfrak{I} \text{ ideal of } B\end{smallmatrix}}\frac{[B/\mathfrak{I}]_A^+\otimes1}{1\otimes [B/\mathfrak{I}]_A^+}=\prod_{\begin{smallmatrix} \mathfrak{P} \text{ maximal}\\\text{ideal of } B\end{smallmatrix}}\Big(1-\frac{[B/\mathfrak{P}]_A^+\otimes1}{1\otimes [B/\mathfrak{P}]_A^+}\Big)^{-1}.$$
The factors of the product are elements of $1+A\otimes_{\FF_q}\mathfrak{m}_{K_\infty}$ 
(with $\mathfrak{m}_{K_\infty}$ the maximal ideal of $K_\infty$) and the product converges to a unit of the Tate algebra $\TT_A$. Indeed for the Gauss valuation $\|\cdot\|$ of $\TT_A$,
$$\left\|\frac{[B/\mathfrak{P}]_A^+\otimes1}{1\otimes [B/\mathfrak{P}]_A^+}\right\|<1\quad \text{ and }
\left\|\frac{[B/\mathfrak{P}]_A^+\otimes1}{1\otimes [B/\mathfrak{P}]_A^+}\right\|\rightarrow0$$ for 
$\mathfrak{P}$ maximal (the limit runs over an arbitrary choice of sequence of distinct maximal ideals). Each factor, seen as a function over the domain $D\subset X(\CC_\infty)\setminus\{\infty\}$, is nowhere vanishing.
As functions, product and series 
converge for $\chi\in D$. 

We show the following result, which is likely to be also reacheable with the methods introduced in \cite[\S 4.2]{ANG&NGO&TAV}:

\begin{Theorem}\label{zeta-E-A-are-entire}
The function $\zeta_{E,A}$ extends to an entire function $X(\CC_\infty)\setminus\{\infty\}\rightarrow\CC_\infty$.
\end{Theorem}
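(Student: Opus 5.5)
We plan to follow Goss' method for Lemma \ref{analytic-extension-goss}: regroup the series defining $\zeta_{E,A}$ according to the degree of the monic generator, and show that each degree-$d$ block vanishes at many points, so that its norms on the growing affinoids $\boldsymbol{D}_X(\rho)$ decay faster than any exponential. Write
$$\zeta_{E,A}=\sum_{d\geq0}T_d,\qquad T_d:=\sum_{\begin{smallmatrix}\mathfrak{I}\subset B\\ \deg([B/\mathfrak{I}]_A^+)=d\end{smallmatrix}}\frac{[B/\mathfrak{I}]_A^+\otimes1}{1\otimes[B/\mathfrak{I}]_A^+}\in A\otimes_{\FF_q}K.$$
Each $T_d$ is a finite sum of elements of $\mathcal{L}(d\infty)$ with coefficients in $K$. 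It therefore suffices to show that, for every $\rho\in|\CC_\infty^\times|$ with $\rho\geq1$, the $\rho$-Gauss norms $\|T_d\|_\rho$ (the supremum over $\boldsymbol{D}_X(\rho)$, equivalently $\sup_i|c_i|\rho^{\deg a_i}$ in a fixed basis $(a_i)$ of $A$) tend to $0$. Since $\bigcup_\rho\boldsymbol{D}_X(\rho)=X(\CC_\infty)\setminus\{\infty\}$, this gives the entire extension.

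\textbf{Step 1: reduction to sums over $A^+$ of fixed degree.} Group the ideals $\mathfrak{I}$ by the value $a=[B/\mathfrak{I}]_A^+\in A^+$. Then
$$T_d=\sum_{a\in A^+(d)}N(a)\,\frac{a\otimes1}{1\otimes a},$$
where $N(a)\in\FF_p$ is the number, taken modulo $p$, of ideals of $B$ with $[B/\mathfrak{I}]_A^+=a$. The function $a\mapsto N(a)$ is multiplicative on coprime elements and depends only on the factorization of $a$ in $A$ together with the splitting of primes in $B$. This is not a polynomial function of $a$, so Lemma \ref{lemma-partial-negative}-type vanishing (\cite[Lemma 8.8.1]{GOS}) cannot be applied directly. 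Instead we follow Goss' treatment of $\zeta_{E,A}$ over $\mathbb{S}_\infty$: write $\zeta_{E,A}$ via partial zeta functions over ideal classes of $B$. Pick representatives $\mathfrak{J}_1,\ldots,\mathfrak{J}_r$ of the (finite) class group of $B$. Ideals in the class of $\mathfrak{J}_k^{-1}$ are $b\mathfrak{J}_k^{-1}$ with $b\in\mathfrak{J}_k$ modulo $B^\times$. We then need $[B/b\mathfrak{J}_k^{-1}]_A^+=[\mathfrak{J}_k]^{+}\cdot\sgn$-normalized $N_{E/K}(b)$ up to a fixed monic factor. The unit group $B^\times$ is infinite, so we take a fundamental domain. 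Following \cite[\S 8]{GOS}, use a finite-index free subgroup $U$ of $B^\times$ together with a cone decomposition (the function field Shintani-type decomposition in $B\otimes_A K_\infty$, which is totally split when $H\subset E$ splits at $\infty$ appropriately). This writes each partial sum as a finite combination of sums
$$\sum_{b\in \Lambda^+,\ \deg N(b)=d}\frac{N_{E/K}(b)\otimes1}{1\otimes N_{E/K}(b)},$$
where $\Lambda$ is an $A$-lattice (a finitely generated $\FF_q$-subspace structure) and $N_{E/K}$ is a polynomial map of degree $[E:K]$ in $\FF_q$-coordinates.

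\textbf{Step 2: vanishing of elementary sums.} For such sums, we expand $N(b)\otimes1$ and $(1\otimes N(b))^{-1}$, writing $N(b)^{-1}=N(b)^{q^k-1}/N(b)^{q^k}$ and truncating. The degree-$d$ piece then becomes a sum over an affine $\FF_q$-space $b_0+\Lambda(<d')$ of a polynomial in the coordinates of degree $O(q^k[E:K])$, with total degree linear in $d$ for fixed $\rho$. By \cite[Lemma 8.8.1]{GOS} such sums vanish once $(q-1)\dim\Lambda(<d')$ exceeds the polynomial degree. This yields $\|T_d\|_\rho\leq \rho^{d}|\text{something}|^{-d q^{k}}$, and choosing $k$ growing like $\log_q\log\rho$ gives $\|T_d\|_\rho\to0$ for each $\rho$. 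This is the same mechanism as in \cite[Lemma 1.1]{CHU&NGO&PEL}, which proves Lemma \ref{partial-zeta-are-entire}.

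The main obstacle is Step 1, namely handling the infinite unit group of $B$ so that the sum over ideals becomes finitely many sums over affine $\FF_q$-subspaces with polynomial summands. If the cone decomposition proves awkward, we would instead use Goss' result that $\zeta_{E,A}$ is entire on $\mathbb{S}^\sharp_{1,\infty}$-type families (\cite[\S 8]{GOS}, \cite[Prop. 5.13]{GOS3}) and transfer the estimates through the identification of $\chi$ with an evaluative quasi-character described above.
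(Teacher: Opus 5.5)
Your skeleton matches the paper's: split by ideal classes of $B$ (the paper uses \eqref{dec-total-partial-2} and Theorem \ref{lemma-relative-partial-is-entire}), reduce to finite sums over affine $\FF_q$-spaces, and kill a main term with \cite[Lemma 8.8.1]{GOS} after a $q$-power truncation. But the estimate in your Step 2 is wrong, and fixing it needs an idea you do not state.

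\textbf{Step 2.} Let $a=[B/(b)]_A^+$ be monic of degree $d$. Then $a^{-1}=\pi^d\langle a\rangle^{-1}$ and $\langle a\rangle^{-1}-\langle a\rangle^{q^k-1}=\langle a\rangle^{-1}(1-\langle a\rangle)^{q^k}$. So the truncation error is bounded by $\rho^d|\pi|^{d}|\pi|^{q^k}$: the gain $|\pi|^{q^k}$ is additive in the exponent, not a factor $|\cdot|^{dq^k}$.

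\begin{itemize}
\item Consequently, once $\rho|\pi|>1$, no $k$ depending only on $\rho$ (such as $k\sim\log_q\log\rho$) makes $\|T_d\|_\rho$ tend to $0$.
\item You must let $k$ grow linearly with $d$, so the main term has to vanish as soon as $d\geq c_1k$.
\item This holds because, up to a constant on each box, $a\otimes\langle a\rangle^{q^k-1}$ equals $\prod_j(\sigma_j(b)\otimes1)\prod_j\prod_{l<k}(1\otimes\sigma_j(b)^{q^l})^{q-1}$. That is a product of only $[E:K](1+k(q-1))$ $\FF_q$-linear forms in $b$, and Goss' lemma counts linear forms.
\item If you instead treat it, as you write, as a polynomial of degree $O(q^k[E:K])$, vanishing needs $d\gtrsim q^k$. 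You then only get $\|T_d\|_\rho\leq(\rho|\pi|^{1+c})^d$, i.e.\ convergence on a bounded region, not an entire function.
\end{itemize}
The correct version is Proposition \ref{rho-r-E}, followed by choosing $r$ as a function of $d$ through the intervals $I_r$ in the proof of Theorem \ref{lemma-relative-partial-is-entire}.

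\textbf{Step 1.} This step is not carried out, and the form you aim for does not work. The set $\{b\in\Lambda:\ \deg N_{E/K}(b)=d,\ N_{E/K}(b)\ \text{monic}\}$ is not an affine $\FF_q$-space. Monicity constrains only the product of the local signs $\sgn_i(b)$, and $\deg N_{E/K}(b)$ constrains only a weighted sum of the pole orders at the places $\infty_i$ above $\infty$.

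No Shintani-type cone decomposition or free subgroup of $B^\times$ is needed. Instead, fix the whole vector $\partial(b)$ and every sign $\sgn_i(b)$.
\begin{itemize}
\item The resulting box $\Lambda(b)=b+\Lambda(<b)$ is an affine $\FF_q$-space.
\item The map $b'\mapsto(b')$ is injective on it: a unit with $\partial(\eta)=0$ is constant, and the fixed signs force $\eta=1$ (Lemma \ref{lemma-goss-1}).
\item Two boxes whose sets of principal ideals meet are carried onto each other by a unit (Lemma \ref{lemma-goss-2}).
\end{itemize}
Hence the principal ideals in $\mathfrak{I}$ of norm-degree $d$ form a disjoint union of finitely many boxes, which is exactly what lets Goss' lemma apply. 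The infinite unit group only permutes boxes.

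Your fallback does not substitute for this. Goss' continuation over $\mathbb{S}_\infty$ or $\mathbb{S}^\sharp_{1,\infty}$ moves $(z,y)$ with the uniformizer entry fixed at $\pi$. Moving the point $\chi$ moves that entry, so there is no ready-made estimate to transfer.
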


%This result implies that the functions $\zeta_{E,A}$ are entire.

\begin{proof}%[Proof of Theorem \ref{zeta-E-A-are-entire}]
In virtue of the forthcoming decomposition \eqref{dec-total-partial-2} and Theorem \ref{lemma-relative-partial-is-entire}, $\zeta_{E,A}$ defines a series which is everywhere convergent on $X(\CC_\infty)\setminus\{\infty\}$ outside a finite set of points of
$X(\FF_q^{\text{alg}})$ where it may have poles. However its Euler product expansion defines an element of the Tate algebra $\TT_A$. Hence the above finite set of possible poles is empty.
\end{proof}

\begin{Remark}{\em 
We still need to prove Theorem \ref{lemma-relative-partial-is-entire}. Our proof of
Theorem \ref{lemma-relative-partial-is-entire} is obtained adapting the methods of \cite[\S 8.9]{GOS}.
In \cite[\S 4.2]{ANG&NGO&TAV}, the authors originally presented an alternative approach using several variables log-algebraicity theorem even introducing at once a multi-variable generalization of the functions $\zeta_{E,A}$. 

In the case $E=H$ the authors of \cite{ANG&NGO&TAV} introduce a generalization of our functions which can be identified with an element of the following ultrametric Banach algebra. Consider the ring $$\mathbb{R}_{A}:=\widehat{\FF_q[z]\otimes_{\FF_q}\operatorname{Frac}(A\otimes_{\FF_q}K)},$$ where $z$ is an indeterminate and where we take the completion with respect to the Gauss valuation that extends the $\infty$-adic valuation we choose on $K$, identified with $1\otimes1\otimes K$. They introduce indeed an element $\tilde{\zeta}_{H,A}$ of $\mathbb{R}_{A}$ for which the value at $z=1$ is meaningful and equals $\zeta_{H,A}$ seen as an element of 
$\widehat{\operatorname{Frac}(A\otimes_{\FF_q}K)}$. The interest of this element (see \cite[Proposition 4.7]{ANG&NGO&TAV}) is that it is the determinant of an {\em equivariant Anderson harmonic series}, seen as an endomorphism of a certain complete group algebra over $G$. If it is possible to identify the equivariant Anderson harmonic series with meromorphic functions, this result of 
\cite{ANG&NGO&TAV} covers our result but we have not tried to pursue in this direction. The particular consideration of the variable $z$ in the above opens, as these authors prove in a large spectrum of papers culminating with \cite{ANG&NGO&TAV3} (see the references therein) which is, up to the author knowledge, the most general version known of a class number formula in the sense of Taelman \cite{TAE}. Ashamedly, we refrain from giving more details here,
to limit the size of our presentation.
}\end{Remark} 

\subsection{Relative partial zeta functions}
We show here that these functions too, extend to entire functions. We use the same notations as before. Consider $\mathfrak{I}$ an ideal of $B$. We define
$$Z_{\mathfrak{I},A}:=\sum_{(b)\subset \mathfrak{I}}\frac{[B/(b)]_A^+\otimes1}{1\otimes [B/(b)]_A^+}\in\TT_A,$$
where the sum runs over the non-zero principal ideals of $B$ contained in $\mathfrak{I}$. 
These are variants of the Goss partial zeta functions discussed in \S \ref{Goss-partial-zeta}. Let 
$\mathcal{I}_1,\ldots,\mathcal{I}_s$ be cosets of the ideal group $\mathcal{I}_B$ modulo principal invertible ideals (there are finitely many). Choose
representatives $\mathfrak{I}_1,\ldots,\mathfrak{I}_s$ that are invertible ideals with the property that their inverses are ideals of $B$.
Then, just as in \eqref{dec-total-partial},
\begin{equation}\label{dec-total-partial-2}
\zeta_{E,A}:=\sum_{i=1}^s\frac{[B/\mathfrak{I}_i]_A^+\otimes1}{1\otimes [B/\mathfrak{I}_i]_A^+}Z_{\mathfrak{I}_i^{-1},A}
\end{equation}
in $\TT_A$.
Our task in this subsection is to show the following. It is not clear that the methods of \cite{ANG&NGO&TAV} can be used to prove it.

\begin{Theorem}\label{lemma-relative-partial-is-entire}
Let $E/H/K$ be a tower of finite extensions with $E/K$ separable, let $B$ the integral closure of $A$ in $E$.
 For $\mathfrak{I}$ ideal of $B$, $Z_{\mathfrak{I},A}$ extends to an entire function over $X(\CC_\infty)\setminus\{\infty\}$.
\end{Theorem}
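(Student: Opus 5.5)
We reduce the sum over principal ideals $(b)\subset\mathfrak{I}$ to a sum over elements of a lattice, then use the vanishing of power sums over $\FF_q$-vector spaces (\cite[Lemma 8.8.1]{GOS}), as in the proof of Lemma \ref{partial-zeta-are-entire} and \cite[\S 8.9]{GOS}.

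\emph{Step 1: reduction to a sum over elements.} For $b\in B\setminus\{0\}$, $[B/(b)]_A=N_{E/K}(b)A$, so $[B/(b)]_A^+=N_{E/K}(b)/\sgn(N_{E/K}(b))$. The unit group $B^\times$ is finitely generated by Dirichlet's theorem; choose a fundamental domain $\mathcal{D}$ for $B^\times$ acting on $\mathfrak{I}\setminus\{0\}$. Then
$$Z_{\mathfrak{I},A}=\sum_{b\in\mathcal{D}}\frac{N^+(b)\otimes 1}{1\otimes N^+(b)},\qquad N^+(b):=\frac{N_{E/K}(b)}{\sgn(N_{E/K}(b))}.$$
Embedding $E\otimes_KK_\infty\cong\prod_{w\mid\infty}E_w$, the logarithmic embedding realizes $B^\times$ as a lattice in a hyperplane. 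Following Goss, we take $\mathcal{D}$ to be a finite union of ``sectors'': sets of $b\in\mathfrak{I}$ whose valuations at the places $w\mid\infty$ lie in a fixed box and whose leading coefficients (a generalized sign in $\prod_w\FF_w^\times$) are fixed. Within such a sector, the set of $b$ with prescribed $w$-adic valuations $(n_w)_w$ and prescribed leading terms is an affine translate $b_0+\mathfrak{I}(<\underline{n})$ of a finite-dimensional $\FF_q$-vector space. This is the analogue of $I^+(d)=a_0+I(<d)$ in the proof of Lemma \ref{lemma-partial-negative}.

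\emph{Step 2: the $n$-th brick as a polynomial and its vanishing.} On such a block, $N^+(b)$ restricts to a polynomial map of degree $[E:K]$ on the $\FF_q$-space, up to the constant sign factor. Writing $x^{-1}$ as a formal variable as in Goss' construction, each block sum
$$S_{\underline n}=\sum_{b\in b_0+\mathfrak{I}(<\underline n)}\frac{N^+(b)\otimes1}{1\otimes N^+(b)}$$
is handled by expanding $(1\otimes N^+(b))^{-1}$ around its leading term, i.e.\ writing $N^+(b)^{-1}=N^+(b_0)^{-1}(1+\epsilon(b))^{-1}$ with $\epsilon$ small. The sum then becomes a combination of sums of polynomials in the coordinates of $b$ over a vector space. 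By \cite[Lemma 8.8.1]{GOS}, such a sum vanishes whenever the total degree is less than $(q-1)$ times the dimension. Hence only terms of high order in $\epsilon$ survive, which gives a bound of the form
$$\|S_{\underline n}\|_\rho\le C\rho^{c\,|\underline n|}\,q^{-\kappa\,|\underline n|\,\ell(\underline n)},$$
where $\ell(\underline n)\to\infty$ grows with the dimension of $\mathfrak{I}(<\underline n)$ (roughly logarithmically, as in Goss' estimates). Here $\|\cdot\|_\rho$ is the sup norm on $\boldsymbol{D}_X(\rho)$, and the growth $\rho^{\deg}$ of $N^+(b)\otimes 1$ is at most exponential in $|\underline n|$.

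\emph{Step 3: summation.} The bound of Step 2 decays superexponentially, so it beats $\rho^{c|\underline n|}$ times the polynomially many blocks with $|\underline n|=d$. Hence $\sum_{\underline n}S_{\underline n}$ converges on every $\boldsymbol{D}_X(\rho)$. Since $\bigcup_\rho\boldsymbol{D}_X(\rho)=X(\CC_\infty)\setminus\{\infty\}$, this gives the entire extension, and the sum agrees with $Z_{\mathfrak{I},A}$ on $\boldsymbol{D}_X$.

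\emph{Main obstacle.} The main obstacle is Step 2 when $E$ has several places above $\infty$ and $B^\times$ has positive rank. The sector decomposition must be chosen compatibly with $\sgn\circ N_{E/K}$. The degree bound for the norm polynomial must also be uniform, so that the Goss-type vanishing yields genuinely superexponential decay and not merely decay of the trivial size. I would first treat the case where $\infty$ has a single place above it in $E$, where units are $\FF^\times$ and the argument is literally Lemma \ref{lemma-partial-negative}. I would then handle the general case by fixing representatives of $B^\times$ modulo a torsion-free part acting by translation on valuation vectors, adapting \cite[\S 8.9]{GOS}.
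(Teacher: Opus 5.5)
\textbf{Step 1 is fine.} It is essentially the paper's decomposition. The blocks $\Lambda(b)=b+\Lambda(<b)$ (fixed $\partial$-vector and fixed signs at the places above $\infty$) are permuted freely by $B^\times$. Lemmas \ref{lemma-goss-1} and \ref{lemma-goss-2} show that taking one block per orbit maps bijectively onto the principal ideals, and your box-shaped fundamental domain is one explicit way of choosing these representatives. (Since $E\supseteq H$ and $\infty$ splits completely in $H$, your ``single place above $\infty$'' warm-up only occurs when $h_A=1$. The unit bookkeeping is not where the difficulty lies.)

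\textbf{The gap is in Step 2.} You expand $(1+\epsilon(b))^{-1}=\sum_k(-\epsilon(b))^k$ and use the vanishing criterion as you state it (total polynomial degree $<(q-1)\dim$). This kills only the terms with $(k+1)[E:K]<(q-1)\dim\Lambda(<b)$, that is, $k$ up to a constant multiple of $d$. Since $|\epsilon(b)|\le|\pi|$, your argument proves only $\|S_\Lambda(1)\|_\rho\le(\rho|\pi|)^d|\pi|^{cd}$ with a fixed $c>0$. This tends to $0$ only for $\rho<|\pi|^{-1-c}$, which gives continuation to a somewhat larger affinoid, not to all of $X(\CC_\infty)\setminus\{\infty\}$. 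The growing factor $\ell(\underline n)$ in your bound is asserted but nothing in your argument produces it. The obstruction is also not uniformity of the degree of the norm form, which is simply $[E:K]$.

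\textbf{The missing idea} is to count $\FF_q$-linear forms rather than polynomial degree, using that $b\mapsto\sigma(b)^{q^k}$ is $\FF_q$-linear for each $K$-embedding $\sigma$ of $E$. In Proposition \ref{rho-r-E} the paper writes $a_b=N^+(b)=\pi^{-d}\langle a_b\rangle$ and replaces $\langle a_b\rangle^{-1}$ by $\langle a_b\rangle^{q^{r+1}-1}$.
\begin{itemize}
\item The error is $\langle a_b\rangle^{-1}(1-\langle a_b\rangle)^{q^{r+1}}$, of size at most $|\pi|^{q^{r+1}}$.
\item The main term $(a_b\otimes1)\prod_{k=0}^{r}(1\otimes a_b^{q^k})^{q-1}$ is a product of only $t=[E:K](1+(r+1)(q-1))$ linear forms in $b$.
\item Hence its sum over the affine space $\Lambda(b_{d,i})$ vanishes by \cite[Lemma 8.8.1]{GOS} once $\dim\Lambda(<b_{d,i})>t$. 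Riemann--Roch on $Y$ guarantees this for $d\ge c_1r$.
\end{itemize}
Taking $r\approx d/c_1$ gives $\|S_{\Lambda(b_{d,i})}(1)\|_\rho\le(\rho|\pi|)^d|\pi|^{q^{d/c_1}}$. This doubly exponential decay beats $\rho^d$ for every $\rho$, and that is what makes the function entire.

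\textbf{Alternative repair.} You could keep your geometric expansion but use the digit-sum form of the vanishing lemma. Let $s_q(j)$ denote the sum of the base-$q$ digits of $j$. Then $\sum_b(N^+(b)\otimes1)(1\otimes N^+(b)^j)=0$ whenever $[E:K](1+s_q(j))<(q-1)\dim\Lambda(<b)$. After expanding $\epsilon^k$ binomially, this kills every term $\epsilon^k$ with $k<q^N$, for some $N$ growing linearly in $d$. That is exponentially many terms rather than linearly many, and it yields the same doubly exponential bound.
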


The proof of the theorem requires additional tools, mainly, we need to revisit the tools of
\cite[Lemma 8.9.3]{GOS}.

\subsubsection*{Subvector spaces} We consider the diagonal embedding $\Sigma:E\rightarrow E_\infty:=E\otimes_{K}K_\infty$.
We can identify $E_\infty:=\oplus_{i=1}^h K_{i,\infty}$ with $K_{i,\infty}$ a finite extension of $K_\infty$ with ramification index $e_i$ and residual degree $f_i$, and write $\Sigma=(\sigma_i)_i$. For example, if $E=H$, $h=h_A$, $f_i=e_i=1$ for all $i$ (the infinity place of $K$ totally splits in $H$).
Choosing for all $i$ a uniformizer $\pi_i$ of $K_{i,\infty}$ with the associated sign function $\sgn_i:K_{i,\infty}^\times\rightarrow k_i^\times$ (with $k_i$ the residual field of $K_{i,\infty}$), we can associate to 
every element $x=(x_i)\in E_\infty$ the vector $$\partial(x):=(\deg_i(x_i))_i\in(\ZZ\cup\{-\infty\})^h$$ where, if
$x_i$ is non-zero, $\deg_i(x_i)$ is the unique integer such that $x_i\pi_i^{\deg_i(x_i)}$ has multiplicative valuation $1$ in $K_{i,\infty}$ (degree $0$), while the value $-\infty$ is associated to $0$. If $x$ is an element of $E$ we identify it with 
its image $\Sigma(x)$ in $E_\infty$.
For $x,y\in E_\infty$ we write $x<y$ if coefficientwise, $\partial(x)<\partial(y)$. To fix the ideas, if $E=K=\FF_q(\theta)$ so
$A=\FF_q[\theta]$, then $a<b$ if and only if $\deg(a)<\deg(b)$.

We set $Y:=\operatorname{Spec}(B)$. Then there is a finite covering 
$Y\rightarrow X$. The embeddings $\sigma_i$ are in bijective correspondence with the points $\infty_i$ of $Y$ above $\infty$. For example if $E=H$ we have $h_A$ distinct points $\infty_i$ above $\infty$. In all cases the datum of $\partial(x)$ corresponds to the infinity part of the divisor of $x$, seen as a rational function over $Y$.
For two rational functions $x,y$ over $Y$, $x<y$ is equivalent to the property that, at each point $\infty_i$ above $\infty$, the order of $x$ is strictly larger than that of $y$.
Note that the image of any invertible ideal $\mathfrak{I}$ in $E_\infty$ via $\Sigma$ is a discrete and co-compact submodule
of $E_\infty$.

The map $\partial: E\rightarrow(\ZZ\cup\{-\infty\})^h$ defined in this way behaves like the opposite of an additive valuation. Consider $x,y\in E$. Then (1) $\partial(x)$ has a vanishing component if and only if $x=0$. (2) $\partial(xy)=\partial(x)+\partial(y)$. (3) If $\partial(x)<\partial(y)$ then $\partial(y)=\partial(x+y)$.

We keep considering $\mathfrak{I}$ a non-zero ideal of $B$. We write
$$\Lambda(b):=\{b'\in\mathfrak{I}:\partial(b')=\partial(b)\text{ and }\sgn_i(b')=\sgn_i(b)\text{ for all $i$}\}.$$
Note that for all $b',b''\in\Lambda(b)$, $\partial(b'-b'')<\partial(b)$ and for all $b'\in \Lambda(b)$, $\Lambda(b')=\Lambda(b)$. Hence, $$\Lambda(<b):=\{b'-b'':b',b''\in\Lambda(b)\},$$ is a finite dimensional vector space over $\FF_q$ and for all $b\in \Lambda(b)$, 
$$\Lambda(b)=b+\Lambda(<b).$$

\subsubsection*{Principal ideals}
For a non-empty subset $\mathfrak{S}$ of $\mathfrak{I}\setminus\{0\}$ we denote by $\Pi(\mathfrak{S})$ the set of principal ideals 
generated by the elements of $\mathfrak{S}$. They are all contained in $\mathfrak{I}$. If $d\in\NN$ we denote by $\Pi(\mathfrak{S})(d)$ the set of principal ideals $(b)$ of $\Pi(\mathfrak{S})$ such that their norm $N_{E/K}(b)\subset A$ has degree $d$ (this set can be empty).

\begin{Lemma}\label{lemma-goss-1}
Consider $(b)\subset\mathfrak{I}$ a non-zero principal ideal in $\mathfrak{I}$. Write $d=\deg(N_{E/K}(b))\in\NN$.
The map 
$$\Lambda(b)\xrightarrow{\mu}\Pi(\mathfrak{I}\setminus\{0\})(d)$$
defined by sending an element $b'$ to the corresponding principal ideal $(b')\subset\mathfrak{I}$ is injective.
\end{Lemma}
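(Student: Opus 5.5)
The plan has two parts. First I check that $\mu$ is well defined, meaning every $b'\in\Lambda(b)$ generates an ideal of norm degree $d$. Then I prove injectivity by showing that two elements of $\Lambda(b)$ generating the same ideal differ by a unit which must equal $1$.

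\emph{Well defined.} Take $b'\in\Lambda(b)$. Then $b'\in\mathfrak{I}$ and $b'\neq0$, since $\partial(b')=\partial(b)$ has no component equal to $-\infty$. So $(b')\subset\mathfrak{I}$ is a non-zero principal ideal. For its norm degree I use the behaviour at the places above $\infty$. For a non-zero $x\in E$, the product formula on $Y$ gives $\deg(N_{E/K}(x))$ as $\sum_i f_i\,\mathrm{ord}_{\infty_i}$-type contributions. Concretely, $\deg(N_{E/K}(x))=-v_\infty(N_{E/K}(x))$, and $v_\infty(N_{E/K}(x))=\sum_i f_i v_{\infty_i}(x)$, since the norm to $K_\infty$ of an element of $E_\infty=\oplus_i K_{i,\infty}$ is the product of the local norms. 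Each $v_{\infty_i}(x)$ is determined by $\deg_i(x_i)$, so $\deg N_{E/K}(x)$ is a fixed linear function of $\partial(x)$. As $\partial(b')=\partial(b)$, this gives $\deg N_{E/K}(b')=d$, so $(b')\in\Pi(\mathfrak{I}\setminus\{0\})(d)$.

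\emph{Injective.} Suppose $b',b''\in\Lambda(b)$ with $(b')=(b'')$. Then $b''=\varepsilon b'$ with $\varepsilon\in B^\times$, and I claim $\varepsilon=1$.
\begin{itemize}
\item By multiplicativity (property (2)), $\partial(\varepsilon)=\partial(b'')-\partial(b')=0$. So $\varepsilon$ has degree $0$ at every place $\infty_i$, that is $|\sigma_i(\varepsilon)|=1$ for all $i$.
\item A unit of $B$ is a rational function on $Y$ whose poles and zeros are supported only at the $\infty_i$. Since $\varepsilon$ has order $0$ at each $\infty_i$, it has no zeros or poles at all, so it is constant: $\varepsilon\in\FF_E^\times$, the field of constants of $E$. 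This step is the one place where the geometry of $Y$ enters, and I expect it to be the main point needing care. Alternatively, $\varepsilon$ is integral over $\FF_q$ because it is bounded at all places.
\item Now use the signs. Since $\sgn_i$ is a homomorphism that is the identity on constants in the residue field $k_i$, we get $\sgn_i(b'')=\sigma_i(\varepsilon)\sgn_i(b')$, where $\sigma_i(\varepsilon)$ is the image of the constant $\varepsilon$ in $k_i^\times$. The equality $\sgn_i(b'')=\sgn_i(b)=\sgn_i(b')$ forces $\sigma_i(\varepsilon)=1$.
\item The embedding $\sigma_i$ is injective, so $\varepsilon=1$ and $b''=b'$.
\end{itemize}

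One subtlety is that the embedding of $\FF_E$ into $k_i$ must be compatible with how $\sgn_i$ restricts to constants. It holds because constants of $E$ land in the Teichmüller part of $K_{i,\infty}$, which $\sgn_i$ fixes by Definition \ref{sign-function}.
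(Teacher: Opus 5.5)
Your proof is correct and follows the same route as the paper. Equality of ideals produces a unit $\varepsilon\in B^\times$ with $\partial(\varepsilon)=0$, hence a constant, and the sign normalization built into $\Lambda(b)$ forces $\varepsilon=1$. The differences are minor:
\begin{itemize}
\item The paper finishes additively, writing elements as $b_0+b$ with $b\in\Lambda(<b_0)$ and deriving the contradiction $\partial(b_0)=\partial((1-\eta)b_0)=\partial(\eta b'-b)<\partial(b_0)$. You instead use the multiplicativity of $\sgn_i$ directly.
\item You verify that $\mu$ actually lands in norm-degree $d$, which the paper takes for granted.
\item You allow $\varepsilon$ to lie in the constant field of $E$, whereas the paper asserts $\eta\in\FF_q^\times$ without comment.
\end{itemize}
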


\begin{proof}
Choose $b_0\in\mathfrak{I}\setminus\{0\}$ with $d=\deg(N_{E/K}(b_0))$
and suppose that $b,b'\in\Lambda(<b_0)$ are such that we have an equality of principal ideals $(b_0+b)=(b_0+b')$. Then there exists $\eta\in B^\times$ such that $b_0+b=\eta(b_0+b')$. We get $\partial(b_0)=\partial(b_0+b)=\partial(\eta(b_0+b'))=\partial(\eta)+\partial(b_0+b')=
\partial(\eta)+\partial(b_0)$. This means that $\partial(\eta)=0$, that is, $\eta\in\FF_q^\times$. Coming back to the identity
$b_0+b=\eta(b_0+b')$ now that we learned that $\eta\in\FF_q^\times$, we have two possibilities: $\eta=1$. In this case we obtain $b=b'$ and the claimed injectivity. If $\eta\neq1$ then $(1-\eta)b_0=\eta b'-b$ but $\partial(b_0)=\partial((1-\eta)b_0)=\partial(\eta b'-b)<\partial(b_0)$, which is impossible. 
\end{proof}

\begin{Lemma}\label{lemma-goss-2}
Consider two non-zero elements $b_0,b_0'\in\mathfrak{I}$. We have that $\Pi(\Lambda(b_0)\setminus\{0\})\cap\Pi(\Lambda(b_0')\setminus\{0\})\neq\emptyset$ if and only if
$\Pi(\Lambda(b_0)\setminus\{0\})=\Pi(\Lambda(b_0')\setminus\{0\})$.
\end{Lemma}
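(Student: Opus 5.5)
The ``if'' direction is trivial, since both sets are non-empty; the content lies in the ``only if'' direction. I would first show that the sets $\Pi(\Lambda(b)\setminus\{0\})$ depend only on a class of $b$ under multiplication by units, and then that a common principal ideal forces the two classes to agree. Note first that $\Lambda(b_0)$ never contains $0$, because every element of $\Lambda(b_0)$ has the same $\partial$ as $b_0$, all of whose components are finite. So $\Pi(\Lambda(b_0)\setminus\{0\})=\Pi(\Lambda(b_0))$.

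\textbf{Step 1 (transport by units).} For $\eta\in B^\times$ and $b\in\mathfrak{I}\setminus\{0\}$, I would prove that
$$\eta\Lambda(b)=\Lambda(\eta b).$$
Take $b'\in\Lambda(b)$. By multiplicativity (2) of $\partial$ we get $\partial(\eta b')=\partial(\eta)+\partial(b)=\partial(\eta b)$. Each $\sgn_i$ is a group homomorphism, so $\sgn_i(\eta b')=\sgn_i(\eta)\sgn_i(b)=\sgn_i(\eta b)$. Also $\eta b'\in\mathfrak{I}$, since $\mathfrak{I}$ is a $B$-module. This gives $\eta\Lambda(b)\subset\Lambda(\eta b)$, and applying the same argument with $\eta^{-1}$ gives equality. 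Since $(\eta b')=(b')$, it follows that $\Pi(\Lambda(\eta b))=\Pi(\Lambda(b))$.

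\textbf{Step 2 (common ideal).} Suppose $(c)=(c')$ with $c\in\Lambda(b_0)$ and $c'\in\Lambda(b_0')$. Then $c'=\eta c$ for some $\eta\in B^\times$. Since $\Lambda(c)=\Lambda(b_0)$ and $\Lambda(c')=\Lambda(b_0')$, as noted before the lemma, Step 1 gives
$$\Pi(\Lambda(b_0'))=\Pi(\Lambda(c'))=\Pi(\Lambda(\eta c))=\Pi(\Lambda(c))=\Pi(\Lambda(b_0)).$$

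The main point to check is the claim $\Lambda(b')=\Lambda(b)$ for $b'\in\Lambda(b)$, which the paper only notes in passing. It follows because membership in $\Lambda(b)$ is defined by equality of $\partial$ and of all the $\sgn_i$, and these are equivalence conditions. A second point to check is that the signs $\sgn_i$ are genuine homomorphisms on $K_{i,\infty}^\times$, so that the sign condition is preserved under multiplication by $\eta$. Neither point should require more than unwinding the definitions. Unlike Lemma \ref{lemma-goss-1}, this argument does not need $\eta\in\FF_q^\times$.
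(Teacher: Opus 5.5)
Your proof is correct and follows essentially the same route as the paper: a common principal ideal produces a unit $\eta$, and $\eta$ carries one set $\Lambda$ onto the other, so the two sets of principal ideals coincide. The paper verifies $\eta\Lambda(b_0')\subset\Lambda(b_0)$ through ultrametric comparisons of $\partial$ and then swaps the roles of $b_0,b_0'$. You instead obtain $\eta\Lambda(c)=\Lambda(\eta c)$ in one step from the multiplicativity of $\partial$ and of the $\sgn_i$, which also makes the sign condition explicit where the paper leaves it implicit.
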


\begin{proof}
Suppose that we have, for $b\in\Lambda(<b_0)$ and $b'\in\Lambda(<b_0')$, and identity of principal ideals
$(b_0+b)=(b_0'+b')$. 
Then there exists $\eta\in B^\times$ such that $\partial(b_0)=\partial(b_0+b)=\partial(\eta(b_0+b))$.
But $\partial(\eta(b_0+b))=\partial(\eta)+\partial(b_0+b)=\partial(\eta)+\partial(b_0)=\partial(\eta b_0)$.
Hence $\partial(\eta b')<\partial(\eta b'_0)=\partial(b_0)$. Since we also have that $\partial(b)<\partial(b_0)$,
we have $\partial(b'')<\partial(b_0)$ where $b''=b-\eta b'$. We deduce that $\eta b_0'=b_0+b''\in\Lambda(b_0)$
so that $\eta b_0'\in\Lambda(b_0)\setminus\{0\}$. For all $b'$ such that $b'<b_0'$ we have $\eta b'<b_0$ so that
$\eta (b_0'+b')\in\Lambda(b_0)\setminus\{0\}$. This means that $\eta(\Lambda(b_0')\setminus\{0\})\subset\Lambda(b_0)\setminus\{0\}$ and therefore $\Pi(\Lambda(b_0')\setminus\{0\})\subset\Pi(\Lambda(b_0)\setminus\{0\})$.
But exchanging the roles of $b_0,b_0'$, we obtain equality.
The reverse implication is clear.
\end{proof}

We just learned that over $\Pi(\mathfrak{I}\setminus\{0\})(d)$, we have an equivalence relation $\approx$ defined by $(b)\approx (b')$ if and only if $\Pi(\Lambda(b)\setminus\{0\})\cap\Pi(\Lambda(b')\setminus\{0\})\neq\emptyset$, with finitely many classes that can be labelized with representatives
$(b_{d,1}),\ldots,(b_{d,l_d})$. Combining Lemmas \ref{lemma-goss-1} and \ref{lemma-goss-2} we obtain the decomposition in disjoint subsets:
\begin{eqnarray}
\Pi(\mathfrak{I}\setminus\{0\})&=&\bigsqcup_{d\geq0}\Pi(\mathfrak{I}\setminus\{0\})(d)\nonumber\\
&=&\bigsqcup_{d\geq0}\bigsqcup_{i=1}^{l_d}\Pi(\Lambda(b_{d,i})\setminus\{0\})\nonumber\\
&=&\bigsqcup_{d\geq0}\bigsqcup_{i=1}^{l_d}\{(b):b\in\Lambda(b_{d,i})\setminus\{0\}\}\label{decomposition-bdi}.
\end{eqnarray}
We deduce that there is a decomposition:
\begin{equation}\label{goss-decomposition}
Z_{\mathfrak{I},A}=\sum_{d\geq 0}\sum_{i=1}^{l_d}S_{\Lambda(b_{d,i})}(1),
\end{equation}
where for $\Lambda:=\Lambda(b_0)$, $b_0\in B$ and $n>0$, 
$S_{\Lambda}(n)$ is the finite sum
$$S_{\Lambda}(n):=\sum_{b\in\Lambda\setminus\{0\}}\frac{[B/(b)]^+_A\otimes1}{1\otimes\Big([B/(b)]^+_A\Big)^n}\in A\otimes_{\FF_q} K.$$

\subsubsection*{Growth estimates}
Given a sequence $(i_d)_d$ with $1\leq i_d\leq l_d$, crucial is now the understanding of the growth of
$\|S_{\Lambda(b_{d,i})}(n)\|_\rho$ for $d\rightarrow\infty$,
where $\rho\in|\CC_\infty^\times|$, $\|\cdot\|_\rho$ is the multiplicative valuation of $A\otimes_{\FF_q}\CC_\infty$ extending that chosen for $1\otimes\CC_\infty$ and determined by $\|a\otimes1\|_\rho=\rho^{\deg(a)}$ for $a\in A$ (we are going to follow the path of \cite[Proof of Lemma 1.1]{CHU&NGO&PEL}). In fact we need an extension of the functions $S_{\Lambda(b_{d,i})}:\ZZ\rightarrow K_\infty$ to
Goss space of quasi-characters $\mathbb{S}_\infty$. We set, for $s=(x,y)\in\mathbb{S}_\infty$:
$$S_{\Lambda}(s):=x^{-d}\sum_{b\in\Lambda}\frac{[B/(b)]_A^+\otimes1}{1\otimes\langle [B/(b)]_A^+\rangle^y}\in A\otimes\CC_\infty,$$
where $d=\deg([B/(b)]_A^+)$. Note that if $s=(\pi^{-n},n)$ as in (\ref{definition-sn}) then
$S_{\Lambda}(s_n)$ agrees with $S_{\Lambda}(n)$. For the sake of simplicity, we now focus on the case $n=1$.

\begin{Proposition}\label{rho-r-E}
Choose $\rho\in|\CC_\infty^\times|$, $\rho\geq1$, and $r\geq 1$. There exists a constant $c_1=c_1(E)>0$ depending on $E$ only, such that for all $d\geq c_1r$ and for all $i\in\{1,\ldots,l_d\}$ we have
$$\|S_{\Lambda(b_{d,i})}(1)\|_\rho\leq (\rho|\pi|)^d|\pi|^{q^{r+1}},$$
where the $(b_{d,i})$ are the representatives that yield the decomposition (\ref{decomposition-bdi}). 
\end{Proposition}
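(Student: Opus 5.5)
The strategy follows Goss' proof of \cite[Lemma 8.9.3]{GOS}, along the lines of \cite[Proof of Lemma 1.1]{CHU&NGO&PEL}. Fix $\Lambda=\Lambda(b_{d,i})=b_0+\Lambda(<b_0)$ and set $V:=\Lambda(<b_0)$, a finite dimensional $\FF_q$-vector space. The aim is to write the summand of $S_\Lambda(1)$ as a polynomial in the coordinates of $b\in V$, and then to kill most monomials using the vanishing of power sums over $\FF_q$-vector spaces (\cite[Lemma 8.8.1]{GOS}, as in the proof of Lemma \ref{lemma-partial-negative}).

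The first step is to make the summand explicit. Since $E\supset H$, the monic generator $[B/(b)]_A^+$ equals $\sgn$-normalized $N_{E/K}(b)$. For $b\in\Lambda$ all signs $\sgn_i(b)$ coincide with those of $b_0$, so this normalizing constant $\epsilon\in\FF_q^\times$ (really in the compositum of the residue fields, then descended) is independent of $b$. Hence
\[
\frac{[B/(b)]_A^+\otimes1}{1\otimes[B/(b)]_A^+}=\frac{\epsilon N(b)\otimes1}{1\otimes\epsilon N(b)}.
\]
Write $N(b)=N(b_0)\prod_j\sigma_j(1+b/b_0)$ over the $[E:K]$ embeddings. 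Because $\partial(b)<\partial(b_0)$, each $\sigma_j(b/b_0)$ lies in the maximal ideal. So $1\otimes N(b)^{-1}=N(b_0)^{-1}\sum_k(-1)^k h_k(b)$, an expansion in the $\infty$-adically small quantities $b/b_0$ whose degree-$k$ homogeneous part has size at most $|\pi|^{k}$ relative to $N(b_0)^{-1}$, which has size $|\pi|^d$. The numerator $N(b)\otimes1$ is a polynomial of degree $[E:K]$ in the coordinates of $b$ (with respect to an $\FF_q$-basis $e_1,\dots,e_m$ of $V$, $m=\dim V$), with coefficients in $A\otimes K$ of $\|\cdot\|_\rho$-norm at most $\rho^d$. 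The product is therefore a power series in the coordinates $x_1,\dots,x_m\in\FF_q$ of $b$, and each monomial of total degree $k$ in the $1\otimes$-part comes with an extra factor of size $\le|\pi|^{k}$.

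The second step is to sum over $b\in V$. A monomial $\prod x_\ell^{n_\ell}$ sums to zero over $\FF_q^m$ unless every $n_\ell\ge q-1$ (and is nonzero). Monomials contributing to the sum thus have total degree at least $(q-1)m$ minus the at most $[E:K]$ degrees absorbed by the numerator. More precisely, I would apply the $q$-digit refinement of \cite[Lemma 8.8.1]{GOS}: after expanding $(1+u)^{-1}$ via Frobenius-power structure, a surviving term needs digit sums forcing total weight $\ge q^{r+1}$ once $m$ is larger than a constant times $r$. Surviving terms therefore carry a factor $|\pi|^{q^{r+1}}$, and together with $\|N(b_0)^{-1}\otimes N(b)\|_\rho\le(\rho|\pi|)^d$ this gives the bound.

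The third step, and the main obstacle, is relating $m=\dim_{\FF_q}\Lambda(<b_0)$ to $d$. By Riemann--Roch on $Y$, $\Lambda(<b_0)$ is $\mathfrak{I}\cap\mathcal{L}(\sum_i(\deg_i(b_0)-1)\infty_i)$ up to a bounded correction. Its dimension is at least $\sum_i f_i(\deg_i b_0)-c(E)$, and $d=\deg N(b_0)$ is comparable to $\sum_if_i\deg_i(b_0)$, so $m\ge d/c'-c''$. Choosing $c_1(E)$ large makes $m$ exceed the threshold needed in the second step, roughly $m\ge c\,r$ up to the $[E:K]$-loss. I would try first to carry out the digit-counting in the second step carefully, since turning the naive gain $|\pi|^{(q-1)m}$ into the stated $|\pi|^{q^{r+1}}$ with only linear dependence $d\ge c_1 r$ requires Goss's sharper counting, which exploits that $N(b)$ is a product of $\FF_q$-linear conjugates, rather than a crude degree bound.
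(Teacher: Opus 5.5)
Your framework is the right one, but the step that carries the whole proposition is only announced, not proved. What your second step actually establishes is that a monomial in $x_1,\dots,x_m$ survives summation over $\FF_q^m$ only if its degree is at least $(q-1)m$. That yields at best $\|S_\Lambda(1)\|_\rho\leq(\rho|\pi|)^d|\pi|^{(q-1)m-[E:K]}$, with $m=\dim_{\FF_q}\Lambda(<b_0)\leq d+O(1)$. This gain is only geometric in $d$. It cannot give $|\pi|^{q^{r+1}}$ for every $d\geq c_1r$, and it would make the series for $Z_{\mathfrak{I},A}$ converge only for $\rho$ in a bounded range, so it would not prove entireness. The ``$q$-digit refinement'' meant to bridge this is asserted in one sentence and then explicitly deferred, so the core of the proof is missing. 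Conversely, the Riemann--Roch comparison of $m$ with $d$, which you call the main obstacle, is routine; the paper does it in a few lines.

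The missing idea is to replace the infinite expansion of the inverse by a finite product of Frobenius-twisted linear forms, using a characteristic-$p$ identity. Write $a_b=[B/(b)]_A^+=\pi^{-d}\langle a_b\rangle$. Then $\langle a_b\rangle^{-1}-\langle a_b\rangle^{q^{r+1}-1}=\langle a_b\rangle^{-1}(1-\langle a_b\rangle)^{q^{r+1}}$, hence
\[S_\Lambda(1)=S_\Lambda\big((\pi^{-1},1-q^{r+1})\big)+(1\otimes\pi^{d})\sum_{b\in\Lambda}\frac{a_b\otimes1}{1\otimes\langle a_b\rangle}\Big(1\otimes(1-\langle a_b\rangle)^{q^{r+1}}\Big).\]
This is the same as noting that your truncation $\sum_{k<q^{r+1}}(-u)^k$ equals $(1+u)^{q^{r+1}-1}$.

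The second term has norm at most $(\rho|\pi|)^d|\pi|^{q^{r+1}}$, since $|1-\langle a_b\rangle|\leq|\pi|$. The first term equals $\pi^{dq^{r+1}}\sum_{b\in\Lambda}(a_b\otimes1)(1\otimes a_b^{q^{r+1}-1})$. Because $a_b^{q^{r+1}-1}=\prod_{k=0}^r(a_b^{q^k})^{q-1}$ and $b\mapsto\sigma_j(b)^{q^k}$ is $\FF_q$-linear, each summand is, up to your constant $\epsilon$, a product of $t=[E:K](1+(r+1)(q-1))$ $\FF_q$-linear forms in $b$. Then \cite[Lemma 8.8.1]{GOS} makes the sum over the affine space $b_0+\Lambda(<b_0)$ vanish once $\dim_{\FF_q}\Lambda(<b_0)>t$. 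This threshold is linear in $r$, which is exactly where $d\geq c_1r$ comes from.

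So the sharpening rests on Frobenius twists of linear forms being again linear, not merely on $N(b)$ being a product of conjugates. Working with the one-unit $\langle a_b\rangle\in1+\mathfrak{m}_{K_\infty}$ of the norm, rather than with the individual ratios $\sigma_j(b/b_0)$, also avoids a loss when $\infty$ ramifies in $E$. There you only have $|\sigma_j(b/b_0)|\leq|\pi|^{1/e_j}$, so your expansion would give $|\pi|^{q^{r+1}/e}$ instead of $|\pi|^{q^{r+1}}$.
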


\begin{proof}
Write $\Lambda=\Lambda(b_{d,i})$ and consider $b\in\Lambda$. Then, with $a_b:=[B/(b)]_A^+\in A^+$, $d=\deg(a_b)$,
$$(1\otimes\pi^{d})\frac{a_b\otimes1}{1\otimes\langle a_b\rangle}=\frac{a_b\otimes1}{1\otimes a_b}=\frac{[B/(b)]_A^+\otimes1}{1\otimes [B/(b)]_A^+}=\frac{N_{E/K}(b)\otimes1}{1\otimes N_{E/K}(b)}=\frac{\prod_i\sigma_i(b)\otimes1}{1\otimes\prod_i\sigma_i(b)}.$$
Now compute, with $s_1=(\pi^{-1},1)\in\mathbb{S}_\infty$,
$$S_\Lambda(s_1)-S_\Lambda((\pi^{-1},1-q^{r+1}))=(1\otimes\pi^{d})\sum_{b\in\Lambda}\frac{a_b\otimes1}{1\otimes\langle a_b\rangle}\Big(1\otimes(1-\langle a_b\rangle)^{q^{r+1}}\Big).$$
Moreover
\begin{eqnarray*}
\pi^{-dq^{r+1}}S_\Lambda((\pi^{-1},1-q^{r+1}))&=&\pi^{d(1-q^{r+1})}\sum_{b\in\Lambda}(a_b\otimes1)1\otimes\langle a_b\rangle^{q^{r+1}-1}\\
&=&\sum_{b\in\Lambda}\prod_jL_{0,j}(b)\prod_j\prod_{k=0}^rL_{1,j,k}(b)^{q-1},
\end{eqnarray*}
where $L_{0,j},L_{1,j,k}$ are the $\FF_q$-linear forms
$E\rightarrow E_\infty$ defined by $L_{0,j}(b)=\sigma_j(b)\otimes1$ with $\sigma_j$ the $j$-th $K$-embedding
and $L_{1,j,k}(b)=1\otimes\sigma_j(b)^{q^k}$. Therefore to control the vanishing of $S_\Lambda((\pi^{-1},1-q^{r+1}))$
we can use \cite[Lemma 8.8.1]{GOS} with $J=E,J_1=E_\infty$ (it is not a field, but the given proof  extends to our setting), $\{\mathcal{L}_1,\ldots,\mathcal{L}_t\}=\{L_{0,j},L_{1,j,k}:\text{ for varying }j,k\}$
so that $t=[E:K](1+(r+1)(q-1))$ and if 
$$\dim_{\FF_q}(\Lambda(<b_{d,i}))>[E:K](1+(r+1)(q-1))$$
then $S_\Lambda((\pi^{-1},1-q^{r+1}))$ vanishes. We now discuss when this happens. With the previous choice of $d$, we recall that all elements of $\Lambda(<b_{d,i})$ can be identified with rational functions on $Y$ in the $\FF_q$-vector space
$\mathcal{L}(\Delta(b_{d,i}))$ composed of those rational functions $f$ with $(f)+\Delta(b_{d,i})$ effective ($(f)$ denotes here the divisor associated to $f$),
where $D(b_{d,i})$ is the polar divisor of $b_{d,i}$. In fact, the image of $\Lambda(<b_{d,i})$ in $\mathcal{L}(D(b_{d,i}))$ has finite co-dimension uniformly bounded in terms of the number of points above $\infty$. By Riemann-Roch Theorem, $\dim_{\FF_q}(\mathcal{L}(D(b_{d,i})))\geq\deg(D(b_{d,i}))-g'+1=d-g'+1$ independent on $i$. Therefore there is a constant $c_1$ depending on $E$ such that if $d\geq c_1r$, $S_\Lambda((\pi^{-1},1-q^{r+1}))$ vanishes for $\Lambda=\Lambda(b_{d,i})$ independently on $i$. 

Suppose that $d\geq c_1r$. 
Then
\begin{eqnarray*}
S_{\Lambda}(1)=S_{\Lambda}((\pi^{-1},1))&=&S_{\Lambda}((\pi^{-1},1))-S_{\Lambda}((\pi^{-1},1-q^{r+1}))\\
&=&(1\otimes\pi^d)\sum_{b\in\Lambda}\frac{a_b\otimes1}{1\otimes\langle a_b\rangle}\Big(1\otimes(1-\langle a_b\rangle)^{q^{r+1}}\Big).
\end{eqnarray*}
Choose $\rho\geq1$. We have, for $b,a_b$ as above, $\|a_b\otimes1\|_\rho=\rho^d$. Also, $\|1\otimes a_b\|\leq|\pi|^{q^{r+1}}$.
Hence we get, for all $d\geq c_1r$,
$\|S_{\Lambda}(1)\|_\rho\leq(\rho|\pi|)^d|\pi|^{q^{r+1}}$ as expected.
\end{proof}

\begin{proof}[Proof of Theorem \ref{lemma-relative-partial-is-entire}]
Consider $\rho\in|\CC_\infty^\times|$ such that $\rho|\pi|>1$ and choose $\epsilon>0$. 
The set of integers $d$ such that $c_1r\leq d$ and $(\rho|\pi|)^d|\pi|^{q^{r+1}}<\epsilon$ is a (possibly empty) interval $I_r$
of $\NN$:
$$I_r=\left[c_1r,\frac{\log\epsilon}{c_2}+\frac{c_3}{c_2}q^{r+1}\right]\cap\NN,$$ where $\log$ is the usual logarithm $\RR_{>0}\rightarrow\RR$ (so $\log\epsilon$ may be negative).
where $c_2=c_2(\rho)=\log(\rho|\pi|)$ (a positive constant depending only on $\rho$) and $c_3=-\log(|\pi|)>0$.
There exists a positive constant $c_4=c_4(\rho,\epsilon)$ such that for $r\geq c_4$, the intervals $I_r$ cover a half-line inside $\NN$, therefore missing, at most, finitely many integers. Hence if $d\geq c_4$, we have $\|S_{\Lambda(b_{d,i})}(1)\|_\rho<\epsilon$. In particular, for $\rho$ fixed, the sequence 
$\|S_{\Lambda(b_{d,i})}(1)\|_\rho$ tends to zero for any infinite sequence of distinct values of $(d,i)$. 
Since this holds for any choice of $\rho$ large enough, this concludes the proof that $Z_{\mathfrak{I},A}$ is an entire function.
\end{proof}

\begin{Remark}
{\em The proof of Theorem \ref{lemma-relative-partial-is-entire} we gave easily extends to prove that, for all $n\geq 1$, 
$$Z_{\mathfrak{I},A}(n):=\sum_{(b)\subset \mathfrak{I}}\frac{[B/(b)]_A^+\otimes1}{1\otimes ([B/(b)]_A^+)^n}\in\TT_A,$$ so that $Z_{\mathfrak{I},A}(1)=Z_{\mathfrak{I},A}$, extends to an entire function over $X(\CC_\infty)\setminus\{\infty\}$. It would be nice to  find and describe functional identities for these functions, in the spirit of Ferraro's Theorem \ref{Ferraro-theorem}.}
\end{Remark}

To conclude we observe that, up to a certain extent, we can compare Ferraro's rationality theorem for $\omega^{(1)}\zeta_{X,I}$ with Weil's rationality conjecture. More work is required to develop appropriate correspondences for the other two conjectures, the functional equation and Riemann's hypothesis. 

\section{Addendum: Non-trivial zeros}

In \cite[\S 8.24]{GOS} Goss discusses an avatar of Riemann's hypothesis for his function $\zeta_A$ defined in \eqref{definition-Goss-zeta} in the case $A=\FF_q[\theta]$. For any $y\in\ZZ_p$ the zeros $x$ of the function $x\mapsto \zeta_A(x,-y)$
are in $K_\infty$ and simple. On bottom of p. 338 of his book he even formulates a more general conjectural statement, based on intensive computations, for $A$ with class number one. Developing on partial results of Diaz-Vargas \cite{DIA} and Wan \cite{WAN}, Sheats finally demonstrated this conjecture in \cite{SHE}. We refrain from entering this topic here but it is likely that 
non-trivial zeros of our zeta functions distribute over $X_{\CC_\infty}$ following some patterns that may connect to an analogue of Riemann Hypothesis. Statements such as Theorem \ref{Ferraro-theorem} can be used to define what we can qualify as non-trivial zeroes: the zeroes of the function $\delta^{(1)}$ in this case since it corresponds, in our theory, to a multiple of Riemann's $\xi$ (for functions such as $\zeta_{E,A}$ we do not have a complete picture yet, but we may have a similar correspondence and definition). The rigid analytic curve $X_{\CC_\infty}\setminus\{\infty\}$ retracts (in the framework of Berkovich's theory) to a metric graph homeomorphic to a half-line. It makes sense to investigate the position of the retractions of the non-trivial zeroes on this half line. This is much like analyzing something slightly more general than the slopes of a Newton polygon,
therefore it may lead to a question similar to that of Goss. It will be explored in further investigations.

\section{Appendix by G. H. Ferraro: computational study of $\zeta_{H_R,A}$}\label{Appendix}

Let $R\subseteq A$ be an order with conductor $\mathfrak{C}\subseteq A$. We adopt the notation of \S \ref{Dedekind} choosing $E=H_R/K$ the corresponding ring class field (which is defined in the next section, see also \cite{Hayes}), so that $B$ is now the integral closure of $A$ in $H_R$; the field $H_R$ is the analogue of a real cyclotomic field in characteristic $0$. In this appendix we use ideas from \cite{ANG&NGO&TAV} and techniques from \cite{FER1} to prove in an alternative way that $\zeta_{H_R,A}$ is analytic on $X_{\CC_\infty}\setminus\infty$, and to study its zeros. Moreover, we carry out some computations to analyze their behavior.

Goss's version of this zeta function, which we defined in \S 4 as an analytic function from $\mathbb{S}_\infty$ to ${\CC_\infty}$, has trivial zeros at the points $-(q-1)a$ for $a\in\mathbb{Z}_{>0}$, and each of these zeros has multiplicity at least $h_A$ (\cite[Ex. 8.13.7]{GOS}). We prove an analogous statement for the version of $\zeta_{H,A}$ defined over the curve $X_{\CC_\infty}$.

\section{Theory}

We will denote by $\mathcal{I}_R$ the set of invertible ideals of $R$, and $\mathcal{P}_R\subseteq\mathcal{I}_R$ the subset of principal ideals. By slight abuse of notation, we can identify $\mathcal{I}_R$ with the set of ideals of $A$ coprime with $\mathfrak{C}$ and $\mathcal{P}_R$ with the subset of principal ideals $I$ such that $I\cap R$ is a principal $R$-ideal (the extension and contraction induced by the inclusion $R\subseteq A$ make the two pairs of sets canonically isomorphic). 
Given a Drinfeld $R$-module $\phi$ over ${\CC_\infty}$ and $I\in \mathcal{I}_R$, let's denote by $[I]$ its ideal class in $\mathrm{Cl}(R):=\mathcal{I}_R/\mathcal{P}_R$, by $\phi_I\in {\CC_\infty}[\tau]$ the unique monic generator of the left ideal of ${\CC_\infty}[\tau]$ generated by $\{\phi_a\}_{a\in I}$, and by $\partial\phi_I$ its constant coefficient.

Since $\ker(\phi_I)\subseteq\ker(\phi_{aI})$ for all $a\in A$, there is a unique $\phi_a^{[I]}\in {\CC_\infty}[\tau]$ such that $\phi_a^{[I]}\phi_I=\phi_I\phi_a$. It's immediate to check that $\phi^{[I]}$ is itself a Drinfeld module.

A \emph{field of definition} of $\phi$ is a field $L$ such that, for some $c\in\mathbb{C}_\infty^\times$, the coefficients of $c^{-1}\phi_a c$ belong to $L$ for all $a\in R$.
Hayes proved that there is a smallest field of definition for $\phi$ (\cite[\S 6]{Hayes}) and that, if the period lattice associated to $\phi$ is isomorphic to an invertible ideal of $R$, then the field of definition $H_R$ is a Galois extension of $K$ which is unramified at the primes outside $\mathfrak{C}$, and there is a canonical isomorphism 
\[\sigma^R:\mathrm{Cl}(R)\to\mathrm{Gal}(H_R/K)=: G_R\] such that, for any $I\in\mathcal{I}_R$,
\[\phi_a^{[I]}={\sigma^R_{[I]}}(\phi_a)\;\forall a\in R,\]
where ${\sigma^R_{[I]}}$ acts coefficient-wise (\cite[Th. 8.5, Thm. 8.8]{Hayes}). By analogy with the classical theory, the field $H_R$ is called the \emph{ring class field} attached to the order $R$.

Under the previous hypotheses, Hayes also proved that the map
\[\begin{tikzcd}[row sep=small]
    &\mathcal{I}_R\arrow[r]& H_R[G_R][\tau]\\
    &I\arrow[r,mapsto]&\sigma^R_{[I]^{-1}}\phi_I
\end{tikzcd}\]
is multiplicative (\cite[Thm. 3.10]{Hayes}), and deduced that, for any $I\in \mathcal{I}_R$, $\partial\phi_I B_R=I B_R$.

\vspace{5mm}
Let $H_{R,\infty}:= H_R\otimes_K K_\infty$, and define the ring $B_R\hat{\otimes}H_{R,\infty}$ as the completion of $B_R\otimes_{\FF_q}H_{R,\infty}$ with respect to the topology induced by $H_{R,\infty}$. We also denote by $B_R\hat{\otimes}H_{R,\infty}[G_R]$ (resp. $B_R\otimes_{F_q} H_R[G_R]$) the noncommutative ring generated by $B\hat{\otimes}H_{R,\infty}$ (resp. $B_R\otimes_{\FF_q}H_R$) and by $G_R$, with the relation \[{\sigma^R_{[I]}}(b\otimes c)=({\sigma^R_{[I]}}(b)\otimes{\sigma^R_{[I]}}(c)){\sigma^R_{[I]}}.\]

In \cite[\S 3.3]{ANG&NGO&TAV}, Anglès, Ngo Dac, and Tavares Ribeiro study an \emph{equivariant $A$-harmonic series} in $H_{A,\infty}[G_R]$, whose determinant is the value at $s=1$ of Goss' version of a Dedekind-like zeta. In the case $R=A$, with $\phi$ a Drinfeld $A$-module of rank $1$, it is defined as follows:

\[\mathcal{L}(\phi):=\prod_{P\in\mathrm{Specm}(A)}(1-\partial\phi_P^{-1}{\sigma^A_P})^{-1}=\sum_{0\neq I\subseteq A}\partial\phi_I^{-1}{\sigma^A_I}\in H_{A,\infty}[G_A].\]
The infinite product is well defined because all the factors commute with one another by Hayes; it's easy to prove that the product converges because for any valuation $w$ on $H_A$ above infinity, \[\lim_{\substack{I\subseteq A\\\deg(I)\to\infty}}w(\partial\phi_I)=-\infty.\]

We introduce a slight modification of this zeta, for an arbitrary order $R$. Let $\phi$ be a Drinfeld $R$-module whose period lattice is isomorphic to an invertible ideal of $R$; we set:

\[\xi(\phi):=\prod_{\substack{P\in\mathrm{Specm}(A)\\P\nmid\mathfrak{C}}}(1-(\partial\phi_P\otimes\partial\phi_P^{-1}){\sigma^R_P})^{-1},\]
which is well defined and converges in $B\hat\otimes H_{R,\infty}[G]$ by the same reasoning as the one used by Anglès, Ngo Dac, and Tavares Ribeiro; we can also similarly rearrange this product to get an infinite series indexed over the ideals in $\mathcal{I}_R$:
\[\xi(\phi)=\sum_{I\in\mathcal{I}_R}\partial\phi_I\otimes\partial\phi_I^{-1}\sigma^R_{[I]}\in B_R\hat\otimes H_{R,\infty}[G_R].\]

Let's consider the natural $K\otimes_{\FF_q} K$-linear left action of $H_R\otimes_{\FF_q} H_R[G_R]$ on $H_R\otimes_{\FF_q} H_R$, and specifically in the endomorphism \[(\partial\phi_P\otimes\partial\phi_P^{-1})\sigma^R_P=(\partial\phi_P\sigma^R_P)\otimes(\partial\phi_P^{-1}\sigma^R_P)\] for a given prime ideal $P\in \mathcal{I}_R$.
% Given a ring $S$, an $S$-module $M$, and an endomorphism $f\in\End_S(M)$, we denote by $\mathrm{Eig}_S(f)$ its eigenvalues counted with multiplicity (meaning that $\mathrm{Eig}_S(f)$, The eigenvalues of this endomorph:
% \[\mathrm{Eig}((\partial\phi_P\otimes\partial\phi_P^{-1})\sigma^R_P)=\left\{\lambda\otimes\mu\mid\lambda\in \mathrm{Eig}(\partial\phi_P\sigma^R_P),\mu\in \mathrm{Eig}(\partial\phi_P^{-1}\sigma^R_P)\right\}.\]

% The product of its eigenvalues (counted with multiplicity), is given by 
% \[\det(1-(\partial\phi_P\otimes\partial\phi_P^{-1})\sigma^R_P)=\left(\det(1-\partial\phi_P\sigma^R_P)\otimes\det(1-\partial\phi_P^{-1}\sigma^R_P)\right)^{h_R},\]

% where $h_R:=[H_R:K]$.

If $e$ is the order of ${\sigma^R_P}$ in $G_R$, Hayes' results tell us that $e$ is the order of $P$ in $\mathrm{Cl}(R)$, that $c_{P^e}:=\partial\phi_{P^e}\in R$ is a generator of the ideal $P^e\subseteq R$, and that \[(\partial\phi_P^{\pm1}\sigma^R_P)^e=c_{P^e}^{\pm1}\cdot\mathrm{id}_{H_R}.\]

Let's prove the following lemma. Note that its proof is somewhat complicated by the fact that $e$ is not necessarily coprime with the characteristic of the base field.
\begin{Lemma}
    The characteristic polynomial of $(\partial\phi_P\otimes\partial\phi_P^{-1})\sigma^R_P$ is
\[(T^e-c_{P^e}\otimes c_{P^e}^{-1})^\frac{h_R^2}{e},\]
where $h_R:=\dim_K(H_R)=\#\mathrm{Cl}(R)$.
\end{Lemma}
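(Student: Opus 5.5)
The plan is to work over the base $K\otimes_{\FF_q}K$. There $H_R\otimes_{\FF_q}H_R$ is free of rank $h_R^2$, and the operator $\Phi:=(\partial\phi_P\otimes\partial\phi_P^{-1})\sigma^R_P$ factors as $\Phi=\alpha\otimes\beta$ with $\alpha:=\partial\phi_P\sigma^R_P$ and $\beta:=\partial\phi_P^{-1}\sigma^R_P$. Each of $\alpha,\beta$ is a $K$-linear endomorphism of $H_R$ satisfying $\alpha^e=c_{P^e}$ and $\beta^e=c_{P^e}^{-1}$, by the facts recalled just above from Hayes. Hence $\Phi^e=c_{P^e}\otimes c_{P^e}^{-1}=:\gamma$, a scalar in $K\otimes K$. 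The point is to show that $\Phi$ behaves like the companion matrix of $T^e-\gamma$, repeated $h_R^2/e$ times. I will not diagonalize, since $T^e-\gamma$ may be inseparable when $p\mid e$.

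\textbf{Step 1 (structure of $H_R$ under $\alpha$).} Let $H^{\langle P\rangle}\subseteq H_R$ be the fixed field of the cyclic group $\langle\sigma^R_P\rangle$, of order $e$. Then $[H_R:H^{\langle P\rangle}]=e$ and $[H^{\langle P\rangle}:K]=h_R/e$. By the normal basis theorem, $H_R$ is free of rank one over the twisted group ring $H^{\langle P\rangle}[\langle\sigma^R_P\rangle]$ (i.e.\ $H_R\cong H^{\langle P\rangle}[\langle\sigma\rangle]$ as modules over it). I will use this to pick a $K$-basis of $H_R$ made of blocks $\{x_j,\alpha x_j,\dots,\alpha^{e-1}x_j\}$ for $j=1,\dots,h_R/e$, with $x_j=y_j\theta_0$ where $\theta_0$ is a normal-basis element and $(y_j)$ is a $K$-basis of $H^{\langle P\rangle}$.

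To prove these vectors span, note first that $\alpha^i y=\partial\phi_P\,\sigma(\partial\phi_P)\cdots\sigma^{i-1}(\partial\phi_P)\,\sigma^i(y)$. The cocycle factors $u_i:=\prod_{k<i}\sigma^k(\partial\phi_P)$ are units of $H_R$, so the span of $\{\alpha^i(y_j\theta_0)\}$ equals the span of $\{u_i y_j\sigma^i(\theta_0)\}$. Over $H^{\langle P\rangle}$ I expect a triangularity argument against the basis $\{\sigma^i\theta_0\}$ to work; if it does not, I will instead replace $\theta_0$ by a suitable generator.

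A cleaner route, which I would actually try first: $H_R$ becomes a module over $K[\alpha]\cong K[T]/(T^e-c_{P^e})$, since $\alpha$ is $K$-linear and $\alpha^e=c_{P^e}$. Also $T^e-c_{P^e}$ is irreducible over $K$, because $c_{P^e}$ generates $P^e$ with $P$ prime, so its $P$-adic valuation is $e$ and it is not an $\ell$-th power for any $\ell\mid e$. If $p\mid e$, I replace the prime-power criterion by an Eisenstein argument at $P$: the polynomial $T^e-c_{P^e}$ is Eisenstein only if $e=1$, so I would use Newton polygons at $P$ instead (a single slope $e/e=1$ with denominator... in fact the valuation of $c_{P^e}$ is exactly $e\cdot v_P(P)$, and I must check this carefully). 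Granting irreducibility, $K[\alpha]$ is a field, $H_R$ is a vector space over it of dimension $h_R/e$, and so the characteristic polynomial of $\alpha$ over $K$ is $(T^e-c_{P^e})^{h_R/e}$. The same holds for $\beta$.

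\textbf{Step 2 (tensor product).} Let $F:=K[T]/(T^e-c)$ with $c=c_{P^e}$. Then $H_R\cong F^{h_R/e}$ via $\alpha$, and $H_R\cong F'^{h_R/e}$ via $\beta$, where $F'=K[T]/(T^e-c^{-1})$. So $H_R\otimes H_R$ with $\Phi=\alpha\otimes\beta$ is a direct sum of $(h_R/e)^2$ copies of $F\otimes_{\FF_q}F'$ acted on by $T\otimes T$. It therefore suffices to show that multiplication by $T\otimes T$ on the rank-$e^2$ free $K\otimes K$-module $F\otimes F'$ has characteristic polynomial $(X^e-c\otimes c^{-1})^e$. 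The basis $\{T^i\otimes T^j\}$ splits into $e$ orbits under $(i,j)\mapsto(i+1,j+1)$ indexed by $j-i \bmod e$. On each orbit $T\otimes T$ acts cyclically, with wraparound factors $c\otimes1$ and $1\otimes c^{-1}$ whose product over a full cycle is $c\otimes c^{-1}$. Each orbit is thus a companion block of $X^e-c\otimes c^{-1}$. Combining, the total exponent is $e\cdot(h_R/e)^2=h_R^2/e$, as required.

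\textbf{Main obstacle.} The delicate step is irreducibility of $T^e-c_{P^e}$ over $K$ when $p\mid e$. If that fails, I would avoid it entirely by using the normal basis in Step 1 directly, which gives the cyclic block structure without any field argument.
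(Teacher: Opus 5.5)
There is a genuine gap in Step 1. Your main route needs $K[\alpha]\cong K[T]/(T^e-c_{P^e})$ to be a field, but $T^e-c_{P^e}$ is not irreducible over $K$ in the generality of the lemma, where $R$ is an arbitrary order.

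For example, take $A=\FF_q[\theta]$ and $R=\FF_q+\theta^2A$. Then $\mathrm{Cl}(R)\cong(A/\theta^2A)^\times/\FF_q^\times$ has exponent $p$. Take $P=(\theta+1)A$. Then $e=p$, and $(\theta+1)^p\equiv 1\pmod{\theta^2}$ lies in $R$. Hence $c_{P^p}=\mu(\theta+1)^p$ with $\mu\in\FF_q^\times$, and
$T^p-c_{P^p}=(T-\mu^{1/p}(\theta+1))^p$.
So $K[\alpha]$ is not even reduced.

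Even for $R=A$ your justification does not work. $v_P(c_{P^e})=e$ is divisible by every prime $\ell\mid e$, so valuations cannot exclude $\ell$-th powers. The Newton polygon at $P$ has a single integral slope and gives no information. Irreducibility does hold when $R=A$, but for a different reason: $c_{P^e}=b^\ell$ would make $P^{e/\ell}=bA$ principal. That argument breaks for $R\neq A$, as the example shows.

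Your fallback does not close the gap either. A normal basis element $\theta_0$ for $\sigma:=\sigma^R_P$ need not be a cyclic vector for $\alpha=\partial\phi_P\sigma$. Multiplying by the non-invariant units $u_i$ destroys any triangularity. For instance, with $e=2$, $H_R=L(\sqrt d)$ over $L=H^{\langle P\rangle}$, $\theta_0=1+\sqrt d$ and $u=(1+\sqrt d)^2$, one gets $u\,\sigma(\theta_0)=(1-d)\theta_0$. Saying you will ``replace $\theta_0$ by a suitable generator'' presupposes that a cyclic vector exists, which is exactly what has to be proved.

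The missing idea is the one the paper uses. Since $\alpha^i=u_i\sigma^i$ with $u_i\in H_R^\times$, Dedekind's independence of the distinct automorphisms $\sigma^0,\dots,\sigma^{e-1}$ shows that no nontrivial combination $\sum_{i<e}k_i\alpha^i$ vanishes. So $T^e-c_{P^e}$ is the minimal polynomial of $\alpha$, whether or not it is irreducible.

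Moreover, $\alpha$ is $H^{\langle P\rangle}$-linear, because $\sigma$ fixes $H^{\langle P\rangle}$. The same independence holds with coefficients in $H^{\langle P\rangle}$, and $\dim_{H^{\langle P\rangle}}H_R=e$. Hence $T^e-c_{P^e}$ is both the minimal and the characteristic polynomial of $\alpha$ over $H^{\langle P\rangle}$. A cyclic vector therefore exists, and $H_R\cong H^{\langle P\rangle}\otimes_K F\cong F^{h_R/e}$ as $F$-modules, where $F=K[T]/(T^e-c_{P^e})$. The same holds for $\beta$.

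That is exactly the input your Step 2 needs, and with it Step 2 is correct. The weighted $e$-cycles on $F\otimes_{\FF_q}F'$, each with product of weights $c_{P^e}\otimes c_{P^e}^{-1}$, give $(X^e-c_{P^e}\otimes c_{P^e}^{-1})^{h_R^2/e}$. This companion-block computation is cleaner than the paper's last step, which pairs eigenvalues $\mu_i\otimes\nu_j$ of a possibly inseparable polynomial. The paper gets the one-factor characteristic polynomial differently: it base-changes to $H_R$ and decomposes $H_R[G_R]$ along right cosets of $\langle\sigma^R_P\rangle$.
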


\begin{proof}
Consider $\partial\phi_P^{\pm1}\sigma^R_P$ as a $K$-linear endomorphism of $H_R$. We can prove the following:
\begin{itemize}
    \item the minimal polynomial of $\partial\phi_P^{\pm1}\sigma^R_P$ is $T^e-c_{P^e}^{\pm1}$;
    \item the characteristic polynomial of $\partial\phi_P^{\pm1}\sigma^R_P$ is $(T^e-c_{P^e}^{\pm1})^\frac{h_R}{e}$.
\end{itemize}

For the first statement we already observed that the minimal polynomial of $\partial\phi_P^{\pm1}\sigma^R_P$ in $K[X]$ divides $X^e-c_{P^e}^{\pm1}$. On the other hand, a nontrivial $H_R$-linear combination of powers of $\sigma^R_P$ smaller than $e$ is never $0$ because the canonical map $H_R[G_R]\to\mathrm{End}_K(H_R)$ is bijective. 

We can prove the second statement without loss of generality by base changing from $K$ to $H_R$. Since $H_R/K$ is a separable extension, the trace map induces a canonical isomorphism between $H_R\otimes_K H_R$ and $\mathrm{End}_K(H_R)$; moreover, the latter is canonically isomorphic to $H_R[G_R]$. It's easy to check that the induced action of $\partial\phi_P^{\pm1}\sigma^R_P$ on $H_R\otimes_K H_R$ corresponds to the left multiplication by $\partial\phi_P^{\pm1}\sigma^R_P$ on $H_R[G_R]$. Denote by $\{G_i:=\langle\sigma^R_P\rangle\tau_i\}_{i=1,\dots,h_R/e}$ the right cosets of the cyclic subgroup $\langle\sigma^R_P\rangle\subseteq G_R$, so that $H_R[G_R]=\oplus_{i=1}^{h_R/e}H_R[G_i]$. The action of $\partial\phi_P^{\pm1}\sigma^R_P$ preserves each direct summand $H_R[G_i]$, and its characteristic polynomial on $H_R[G_i]$ is $T^e-c_{P^e}^{\pm1}$. We deduce that the characteristic polynomial of $\partial\phi_P^{\pm1}\sigma^R_P$ as a $K$-linear endomorphism of $H_R$ is $(T^e-c_{P^e}^{\pm1})^\frac{h_R}{e}$.

With some simple linear algebra, it's possible to show that, if $f$ and $g$ are two endomorphisms of a finite-dimensional vector space $V$ over a field $E$ with a subfield $L$, with eigenvalues $\{\mu_i\}_i$ and $\{\nu_j\}_j$ respectively (counted with multiplicity), then the characteristic polynomial of $f\otimes g$ as a $E\otimes_L E$-linear endomorphism of $V\otimes_L V$ is $\prod_{i,j}(T-\mu_i\otimes\nu_j)$.

This tells us that the characteristic polynomial of $(\partial\phi_P\otimes\partial\phi_P^{-1})\sigma^R_P=(\partial\phi_P\sigma^R_P)\otimes(\partial\phi_P^{-1}\sigma^R_P)$ as a $K\otimes_{\FF_q}K$-linear endomorphism of $H_R\otimes_K H_R$ is 
\[\prod_{i,j}(T-\mu_i\otimes\nu_j)=\prod_j(T^e-c_{P^e}\otimes\nu_j^e)^\frac{h_R}{e}=\prod_j(T^e-c_{P^e}\otimes c_{P^e}^{-1})^\frac{h_R}{e}=(T^e-c_{P^e}\otimes c_{P^e}^{-1})^\frac{h_R^2}{e}.\]
\end{proof}

We use the characteristic polynomial of $(\partial\phi_P\otimes\partial\phi_P^{-1})\sigma^R_P$ to obtain the following expression for the determinant of $\xi(\phi)$ as a $K\hat\otimes K_\infty$-linear endomorphism of $H_R\hat\otimes H_{R,\infty}$:
\begin{align*}
    &\det(\xi(\phi))=\prod_{\substack{P\in\mathrm{Specm}(R)\\P\nmid\mathfrak{C}}}\det(1-(\partial\phi_P\otimes\partial\phi_P^{-1}){\sigma^R_P})^{-1}=\prod_{e|h_A}\prod_{\substack{P\in\mathrm{Specm}(A)\\P\nmid\mathfrak{C}\\\mathrm{ord}_{\mathrm{Cl}(R)}([P])=e}}(1-c_{P^e}\otimes c_{P^e}^{-1})^{-\frac{h_R^2}{e}}\\
    =&\prod_{\substack{Q\in\mathrm{Specm}(B)\\Q\nmid\mathfrak{C}}}(1- N_{H/K}(Q)\otimes N_{H/K}(Q)^{-1})^{-h_R}=\left(\sum_{\substack{J\subseteq B\\J+\mathfrak{C}B=B}} N_{H/K}(J)\otimes N_{H/K}(J)^{-1}\right)^{h_R},
\end{align*}
where
% we identified with slight abuse of notations the primes of $R$ away from $\mathfrak{C}$ with the primes of $A$ away from $\mathfrak{C}$, and 
we used that for any such ideal $P$, the order of $[P]$ in $\mathrm{Cl}(R)$ is equal to the number of distinct prime ideals in $B$ above $P$, and all such primes have norm $P^e$.

By Theorem \ref{zeta-E-A-are-entire} we already know that the Dedekind-like series $\zeta_{H_R,A}$ is analytic on $X_{\CC_\infty}\setminus\infty$; we are able to explicitly describe its divisor outside $\infty$ up to a (finite) effective divisor.

\begin{Theorem}
    The series $\zeta_{H_R,A}$ is analytic on $X_{\CC_\infty}\setminus\{\infty\}$, and its divisor outside $\infty$ is
    \[\mathrm{Div}^+(\zeta_{H_R,A})=W+h_R\sum_{i\geq1}\Xi^{(i)},\]
    where $W$ is an effective $K_\infty$-rational divisor of finite degree and $\Xi^{(i)}\in X(K)$ corresponds to the ring homomorphism $A\to K$ sending $a\in A$ to $a^{q^i}$.
\end{Theorem}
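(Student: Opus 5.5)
The approach is to go through the determinant formula just established. The last computation gives
\[\det(\xi(\phi))=\zeta_{H_R,A}^{\,h_R}\]
(after the obvious identification of the sum over ideals $J\subseteq B$ coprime to $\mathfrak{C}$ with the series defining $\zeta_{H_R,A}$, via $[B/J]_A^+=N_{H/K}(J)$ up to sign normalization). So it suffices to show two things. First, that $\det(\xi(\phi))$ is analytic on $X_{\CC_\infty}\setminus\{\infty\}$ with divisor $W'+h_R^2\sum_{i\geq1}\Xi^{(i)}$. Second, that the $h_R$-th root is again analytic, with divisor $W+h_R\sum_{i\ge1}\Xi^{(i)}$. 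The second point is formal once the first is known: $\zeta_{H_R,A}\in\TT_A^\times$ is already a single-valued function on $\boldsymbol{D}_X$, and its $h_R$-th power extends. The extension of $\zeta_{H_R,A}$ is in any case Theorem \ref{zeta-E-A-are-entire}, and multiplicities divide by $h_R$. Hence the real content is the divisor of $\det(\xi(\phi))$.

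To compute that divisor I would follow the strategy of Ferraro's proof of Theorem \ref{Ferraro-theorem}, now applied equivariantly. Using the log-algebraicity/class formula of \cite{ANG&NGO&TAV}, the equivariant series $\xi(\phi)$ is expressed as a product of a special-function factor and a rational operator. Concretely, I expect an identity of the shape
\[\omega^{(1)}\,\xi(\phi)=\widetilde{\pi}\,\Delta^{(1)},\]
where $\omega$ is the special function attached to $\phi$ (viewed $G_R$-equivariantly, i.e.\ a vector of special functions $\omega_\sigma$ for the conjugate modules $\sigma(\phi)=\phi^{[I]}$), and $\Delta$ is a matrix of rational functions in $B_R\otimes H_R$ obtained as a limit of partial Euler products/partial sums as in Lemma \ref{divisor-partial-sums} and the compact-topology argument with \eqref{limit-V-star-m}. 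Taking determinants over $K\hat\otimes K_\infty$ gives
\[\det(\xi(\phi))=\widetilde{\pi}^{h_R^2}\,\frac{\det\Delta^{(1)}}{\det\omega^{(1)}}.\]

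By Corollary \ref{meromorphic-omega} and Corollary \ref{Corollary-precise-divisors}, each special function has simple poles exactly at $\Xi^{(i)}$, $i\ge0$, besides a finite divisor $V_\sigma$. Hence $\omega^{(1)}$ has poles at $\Xi^{(i)}$, $i\geq1$. The determinant of the $h_R\times h_R$ (over $H_R\otimes H_R$, hence $h_R^2$ over $K\otimes K$) block contributes multiplicity $h_R^2$ at each $\Xi^{(i)}$. The numerator $\det\Delta^{(1)}$ is rational, so its zeros form a finite effective divisor $W'$. It has no poles away from $\infty$, because $\det\xi(\phi)\in\TT_A^\times$ is nonvanishing on $\boldsymbol{D}_X$, which contains $X(\FF_q^{\mathrm{alg}})\setminus\{\infty\}$; this argument kills any possible cancellation there, exactly as in the end of the sketch of Ferraro's theorem. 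The $K_\infty$-rationality of $W$ follows since all the data ($\phi$, $\Delta$ after normalization) are defined over $H_{R,\infty}$, and $H_R\subset K_\infty$ by total splitting at $\infty$. Equivalently, $\zeta_{H_R,A}$ has coefficients in $K_\infty$, so its zero divisor is stable under $\mathrm{Gal}(\CC_\infty/K_\infty)$.

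The main obstacle is the equivariant functional identity: making the Ferraro-type limit argument work for the operator-valued series, including the right normalization of $\Delta$ and checking that the poles of $\omega^{(1)}$ at $\Xi^{(i)}$ are not cancelled by zeros of $\det\Delta^{(1)}$. For that, I would first try evaluating at $\Xi^{(i)}$ directly: the partial-sum vanishing (the variant of Lemma \ref{lemma-partial-negative} used in Lemma \ref{divisor-partial-sums}) gives multiplicity at least $h_R$ for $\zeta_{H_R,A}$ at $\Xi^{(i)}$. A degree count on the finite level rational functions would then give the matching upper bound, just as $\Div(\mathcal{S}_{\le j_m})$ is pinned down exactly.
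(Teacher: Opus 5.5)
Your proposal has a genuine gap at its load-bearing step. The equivariant identity $\omega^{(1)}\xi(\phi)=\widetilde{\pi}\Delta^{(1)}$, with a vector of special functions for the conjugate modules and an operator-valued version of Ferraro's limit argument, is only anticipated, and you flag it yourself as the main obstacle. Everything else rests on it.

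The missing observation is that no equivariant machinery is needed. Choose representatives $I_\sigma$ of the classes in $\mathrm{Cl}(R)$. Then
\[\xi(\phi)=\sum_{\sigma\in G_R}(\partial\phi_{I_\sigma}\otimes\partial\phi_{I_\sigma}^{-1})\,Z_{X,I_\sigma^{-1}}\,\sigma,\]
so the coefficients are \emph{scalar} partial zeta functions. For each of them:
\begin{enumerate}
\item the finite partial sums vanish at $\Xi^{(1)},\ldots,\Xi^{(m)}$ with $m\to\infty$, since they are power sums over $\FF_q$-spaces whose dimension Riemann--Roch controls;
\item Ferraro's limit techniques then give $\Div^+(Z_{X,I_\sigma^{-1}})=W_\sigma-D_{I_\sigma}+\sum_{i\geq1}\Xi^{(i)}$ with $W_\sigma$ finite;
\item comparing with $\Div^+(Z_{X,A})={V^*}^{(1)}+\sum_{i\geq1}\Xi^{(i)}$ (Corollary \ref{Corollary-precise-divisors}) gives rational functions $l_\sigma$ with $Z_{X,I_\sigma^{-1}}=l_\sigma Z_{X,A}$.
\end{enumerate}
Hence
\[\det\xi(\phi)=\det\Big(\sum_{\sigma\in G_R}(\partial\phi_{I_\sigma}\otimes\partial\phi_{I_\sigma}^{-1})\,l_\sigma\,\sigma\Big)\cdot Z_{X,A}^{h_R^2},\]
which is a rational function times a power of a function whose divisor is already known. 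Combined with the scalar identity $\omega^{(1)}Z_{X,A}=\widetilde{\pi}\delta^{(1)}$, this already yields, at the level of determinants, the identity you wanted, with a single scalar $\omega$.

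Even granting your identity, your control of multiplicities at the $\Xi^{(i)}$ is wrong. Zeros of $\det\Delta^{(1)}$ at $\Xi^{(i)}$ cannot cancel the poles of $\omega^{(1)}$; they only raise the order of $\det\xi(\phi)$ there. What must be excluded is \emph{poles} of $\det\Delta^{(1)}$ at the $\Xi^{(i)}$, since these would lower the order below $h_R^2$. Because $\det\xi(\phi)$ is entire, such poles are in fact the only poles $\det\Delta^{(1)}$ could have. They lie outside $\boldsymbol{D}_X$, so the nonvanishing of $\det\xi(\phi)$ on $\boldsymbol{D}_X$ says nothing about them; in Ferraro's sketch that argument only removes a factor supported on $X(\FF_q^{\mathrm{alg}})$.

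The paper settles this using the explicit form of the rational factor. The poles of $l_\sigma$ lie on ${V^*}^{(1)}+D_{I_\sigma}$, away from the $\Xi^{(i)}$, so the rational determinant above is regular there. The factor $Z_{X,A}^{h_R^2}$ then forces order at least $h_R^2$ for $\det\xi(\phi)=\zeta_{H_R,A}^{h_R}$, i.e.\ at least $h_R$ for $\zeta_{H_R,A}$. The zeros of the rational determinant, together with ${V^*}^{(1)}$, account for the finite divisor $W$.

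Your fallback does not close the gap. Vanishing of the partial sums at $\Xi^{(i)}$ only shows the limit vanishes there, i.e.\ order at least $1$; the factor $h_R$ is exactly what the determinant argument produces. Conversely, no exact upper bound is needed, since $W$ may absorb any finite excess.
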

\begin{proof}
Fix representative ideals $\{I_\sigma\}_{\sigma\in G_R}$ of $R$ for each ideal class in $\mathrm{Cl}(R)$, and define the rational functions
\[Z_{X,I_\sigma^{-1},n}:=-\sum_{\substack{a\in I_\sigma^{-1}\setminus\{0\}\\\deg(a)\leq n}}a\otimes a^{-1},\]
so that in $\TT_A$ we have \[\lim_n Z_{X,I_\sigma^{-1},n}=Z_{X,I_\sigma^{-1}}=-\sum_{a\in I_\sigma^{-1}\setminus\{0\}}a\otimes a^{-1}.\]
Denote by $d$ the degree of the ideal $I_\sigma A\subseteq A$ and by $c$ the dimension of the $\FF_q$-vector space $A/R$; by Riemann-Roch, for $n\gg0$, the elements of degree at most $n$ in $I_\sigma^{-1}$ form an $\FF_q$-vector space of dimension $n-d-c-g+1$; in particular, for $1\leq m\leq n-d-c-g$ we have
\[Z_{X,I_\sigma^{-1},n}(\Xi^{(m)})=\sum_{\substack{a\in I_\sigma^{-1}\setminus\{0\}\\\deg(a)\leq n}}a^{q^m-1}=0.\]
Since the poles of $Z_{X,I_\sigma^{-1},n}$ outside $\infty$ coincide with the effective divisor $D_{I_\sigma}$ associated to the ideal $I_\sigma A\subseteq A$, we have:
\[\Div_X(Z_{X,I_\sigma^{-1},n})=W_{\sigma,n}-D_{I_\sigma}+\sum_{i=1}^{n-d-c-g}\Xi^{(i)}-n\infty,\]
where $W_{\sigma,n}$ is an effective divisor of degree $c+g$. Using the same techniques as \cite{FER1}, this allows us to prove that the divisor of the limit $Z_{X,I_\sigma^{-1}}$ outside $\infty$ is:
\[\Div^+_X(Z_{X,I_\sigma^{-1}})=W_\sigma-D_{I_\sigma}+\sum_{i\geq1}\Xi^{(i)},\]
where $W_\sigma$ is an effective $K_\infty$-rational divisor of degree $\leq c+g$; recall that the divisor of $Z_{X,A}$ outside infinity is instead:
\[\Div^+_X(Z_{X,A})=V_*^{(1)}+\sum_{i\geq1}\Xi^{(i)},\]
where $V_*$ is an effective divisor of degree $g$ (which is a dual Drinfeld divisor). In particular, we can write
\[Z_{X,I_\sigma^{-1}}=l_\sigma Z_{X,A}\] for some rational function $l_\sigma$ on $X_{K_\infty}$.

We can rewrite 
\[\xi(\phi)=\sum_{\sigma\in G_R}(\partial\phi_{I_\sigma}\otimes\partial\phi_{I_\sigma}^{-1})Z_{X,I_\sigma^{-1}}\sigma=\left(\sum_{\sigma\in G_R}(\partial\phi_{I_\sigma}\otimes\partial\phi_{I_\sigma}^{-1})l_\sigma\sigma\right)\zeta_A,\] 
hence its determinant is a rational multiple of $\zeta_A^{h_R^2}$. 
We can check that the determinant has actually no poles outside infinity by showing that the same is true for all coefficients $(\partial\phi_{I_\sigma}\otimes\partial\phi_{I_\sigma}^{-1})Z_{X,I_\sigma^{-1}}$:
if we denote by $Y\to X$ the cover associated to the field extension $H_R/K$, since $\partial\phi_{I_\sigma}B_R=I_\sigma B_R$, the divisor of $\partial\phi_{I_\sigma}\otimes\partial\phi_{I_\sigma}^{-1}$ on $Y_{\CC_\infty}$ is
\[\Div^+_Y(\partial\phi_{I_\sigma}\otimes\partial\phi_{I_\sigma}^{-1})=D_{I_\sigma}-\deg(I_\sigma)\infty,\]
where by slight abuse of notation we denote in the same way the effective divisors $I_\sigma$ and $\infty$ on $X_{\CC_\infty}$ and their pullback to $Y_{\CC_\infty}$, hence
\[\Div_Y^+\left((\partial\phi_{I_\sigma}\otimes\partial\phi_{I_\sigma}^{-1})Z_{X,I_\sigma^{-1}}\right)=\Div_Y^+\left((\partial\phi_{I_\sigma}\otimes\partial\phi_{I_\sigma}^{-1})\right)+\Div_Y^+\left(Z_{X,I_\sigma^{-1}}\right)=W_\sigma+\sum_{i\geq1}\Xi^{(i)}.\]

To conclude the proof, we simply need to show that the determinant of $\sum_{\sigma\in G_R}(\partial\phi_{I_\sigma}\otimes\partial\phi_{I_\sigma}^{-1})l_\sigma\sigma$ has no poles in any positive twist of $\Xi$, which is obvious because the poles of any $l_\sigma$, namely $V_*^{(1)}+D_{I_\sigma}$, do not contain $\infty$.
\end{proof}

\section{Computations}

Fix $R:= A$, $B:= B_A$, $H:= H_A$.
For a given positive degree $d$, let's denote by $\mathcal{I}(d)$ the set of ideals in $B$ whose norm has degree $d$. We know that the finite sums
\[\zeta_{H,A}(d):=\sum_{I \in\mathcal{I}(d)}N_{H/K}(I)\otimes N_{H/K}(I)^{-1}\in A\otimes K_\infty\]
and
\[\zeta_{H,A}(\le d):=\sum_{i \le d}\zeta_{H,A}(d)\]
are well defined, and that the latter converges for $d\to\infty$ to the analytic function $\zeta_{H,A}\in \TT_A$.
Given a nonnegative integer $r$, it can be proven using \cite[Lemma 8.8.1]{GOS} that, if for all $H$-rational effective divisors $D$ of degree $d$ with support at infinity we have $\dim H^0(D-\infty)>h(1+r(q-1))$, then the valuation of $\zeta_{H,A}(d)$ is at least $d+q^r$. By Riemann--Roch, since $\deg(D-\infty)=d-h$, the first inequality is implied by $d-h-g_H+1>h(1+r(q-1))$, where $g_H$ is the genus of $H$. By Riemann--Hurwitz, since the extension $H/K$ is unramified, we can write $2-2g_H=h(2-2g)$, hence the equivalent inequalities 
\[r<\frac{\frac{d}{h}-g-1}{q-1}\Leftrightarrow d>h(g+1+r(q-1))\]
imply that the valuation of $\zeta_{H,A}(d)$ is at least $d+q^r$.

In other words, the fastest approximations of $\zeta_{H,A}$ are obtained for small $q$'s and $h$'s; nevertheless, even for the optimal conditions $q=3,h=2,g=1$, we would expect the valuation of $\zeta_{H,A}(d)$ to exceed e.g. $100$ only for $d>20$. It turns out that, at least for the examples we used for our computation, this is not the case, and the approximations converge much faster. In particular, it looks like the valuation of $\zeta_{H,A}(d)$ grows approximately like $q^{\frac{d}{h}-g}$, rather than its $q-1$-th root.
%(the inequality $h \ge \frac{(q-1)(q^{2g}-2gq^g+1)}{2g(q^{g+1}-1)}$ tells us that large $q$'s also have large $h$'s).

By reason of the following computations, we state the following conjecture:
\begin{Conjecture}\label{multiplicity-conjecture} The analytic function $\zeta_{H,A}$ has a zero of multiplicity $h-1$ at $\Xi$.
\end{Conjecture}

For simplicity, we have tested elliptic and hyperelliptic curves defined by equations of the form $y^2=D(x)$, with $q=3,5$. The product between $\zeta_{H,A}(\le d)$ and its conjugate is a polynomial $N_{H/K}(\zeta_{H,A}(\le d))\in K_\infty[x]$ of degree $d$, and we use its factorization to deduce properties of the zeroes of $\zeta_{H,A}$. Call $u$ a uniformizer of $K_\infty$, and set the image of $x$ in $K_\infty$ to be $t:=u^{-2}$.
\begin{itemize}
    \item$\boxed{q=3,h=2,g=1;\;y^2=x^3-x^2-x}$. The valuations of $\zeta_{H,A}(d)$ for $d=1\dots10$ are:
    \[[\infty,2,4,6,12,18,36,54,108,162]\]
    ---they seem to follow the formula $2\cdot 3^{\frac{d-2}{2}}$ when $d$ is even and $4\cdot 3^{\frac{d-3}{2}}$ when $d$ is odd.
    If we truncate the coefficients of $N_{H/K}(\zeta_{H,A}(\le 10))$ at $O(u^{324})$ (the same as the expected valuation of $\zeta_{H,A}(11)$), up to a scalar factor it coincides with the product
    \[\left(x-t\frac{t^2+t+1}{t^2-t+1}\right)(x-t)(x-t^3)^2(x-t^9)^2(x-t^{27})^2(x-t^{81})^2.\]

    \item$\boxed{q=5,h=2,g=1;\;y^2=x^3+2x}$. The valuations of $\zeta_{H,A}(d)$ for $d=1\dots8$ are:
    \[[\infty,2,6,10,30,50,150,250]\]
    ---they seem to follow the formula $2\cdot 5^{\frac{d-2}{2}}$ when $d$ is even and $6\cdot 5^{\frac{d-3}{2}}$ when $d$ is odd. If we truncate the coefficients of $N_{H/K}(\zeta_{H,A}(\le 8))$ at $O(u^{750})$ (the same as the expected valuation of $\zeta_{H,A}(9)$), up to a scalar factor it coincides with the product
    \[\left(x-t\left(\frac{t^2-1}{t^2+1}\right)^2\right)(x-t)(x-t^5)^2(x-t^{25})^2(x-t^{125})^2.\]
    
    \item$\boxed{q=3,h=3,g=1;\;y^2=x^3+x^2-x+1}$. Since $q|h$, for combinatorial reasons $\zeta_{H,A}(d)$ is nonzero if and only if $3|d$. For $d=1,\dots,15$, the valuations of $\zeta_{H,A}(d)$ are:
    \[[\infty,\infty,3,\infty,\infty,9,\infty,\infty,27,\infty,\infty,81,\infty,\infty,243]\]
    ---they seem to follow the formula $3^{\frac{d}{3}}$ (when $3|d$).
    The coefficients of $N_{H/K}(\zeta_{H,A}(\le15))$ have valuations 
    \[[ 0, 4, 6, 6, 22, 24, 24, 76, 78, 78, 238, 240, 240, 724, 726, 726 ].\]
    If we truncate the polynomial at the $15$-th degree and each coefficient at $O(u^{720})$, the polynomial can be decomposed---up to a scalar factor---as \[(x-(t-1+t^{-1}+t^{-2}))(x-t)^2(x-t^3)^3(x-t^9)^3(x-t^{27})^3(x-t^{81})^3.\]
    
    \item$\boxed{q=3,h=3,g=2;\;y^2=x^5+x^3+x-1}$. For $d=1,\dots,18$, the valuations of $\zeta_{H,A}(d)$ are:
    \[[\infty,\infty,\infty,\infty,\infty,6,\infty,\infty,12,\infty,\infty,30,\infty,\infty,84,\infty,\infty,246]\]
    ---they seem to follow the formula $3^{\frac{d-3}{3}}+3$ (when $3|d$). The coefficients of $N_{H/K}(\zeta_{H,A}(\le18))$ have valuations 
    \[[ 0, 6, 6, 6, 16, 12, 12, 34, 30, 30, 88, 84, 84, 250, 246, 246, 736, 732, 732].\]
    If we truncate the polynomial at the $15$-th degree and each coefficient at $O(u^{728})$, the polynomial can be decomposed as \[f(x-t)^2(x-t^3)^3(x-t^9)^3(x-t^{27})^3,\] where $f$
    is a polynomial of degree $4$ with only one root in $K_\infty$. Differently from previous cases, the coefficients of $f$ don't seem to be rational in $t$; by direct computation, if they were, they would have degree at least $8$.
\end{itemize}

Let's call $v\in K_\infty$ the image of $y\in K$ under the canonical inclusion $\Xi$. In all the examples above, it can be checked that the evaluation of $\zeta_{H,A}$ at $x=t$ and $y=-v$ is nonzero, implying that if $N_{H/K}(\zeta_{H,A})$ has a zero of some multiplicity at $x=t$, then $\zeta_{H,A}$ has a zero of the same multiplicity at $(x,y)=(t,v)$.

If we assume the conjecture to hold, we deduce that the divisor $D$ of ``nontrivial'' zeros has degree at most $h (g-1)+1$, and that $D^{(1)}-D$ is linearly equivalent to $(h-1)\Xi+\Xi^{(1)}-h\infty$. 

When $g=1$, i.e. $X$ is an elliptic curve, we have $\deg(D)=g_H=g=1$; since given a dual shtuka divisor $V^*$ we have ${V^*}^{(1)}-V^*\sim\Xi-\infty$, and $D^{(-1)}$ satisfies the same identity, we deduce that $D$ is the twist of a dual shtuka divisor.

\end{document}